\renewcommand\gg{\mathfrak g}
\newcommand\hh{\mathfrak h}
\newcommand\inverse{{^{-1}}}
\newcommand\ra{\rightarrow}
\newcommand{\KK}{{\mathbb K}}
\newcommand{\NN}{{\mathbb N}}
\newcommand{\ZZ}{{\mathbb Z}}
\newcommand{\QQ}{{\mathbb Q}}
\newcommand{\RR}{{\mathbb R}}
\newcommand{\tuple}[1]{{\mathbf {#1}}}
\DeclareMathOperator{\Gal}{Gal}
\DeclareMathOperator{\Lie}{Lie}
\DeclareMathOperator{\IM}{Im}
\numberwithin{equation}{section}
\newtheorem{thm}[equation]{Theorem}
\newtheorem{lem}[equation]{Lemma}
\newtheorem{prop}[equation]{Proposition}
\theoremstyle{definition}
\newtheorem{defn}[equation]{Definition}
\theoremstyle{remark}
\newtheorem{rem}[equation]{Remark}
\theoremstyle{remark}
\newcommand{\ovl}{\overline}
\subjclass[2010]{20G15 (14L24)}
\keywords{Affine $G$-variety, cocharacter-closed orbit, rationality, spherical building, the Centre Conjecture}
\title[Cocharacter-closure and spherical buildings]
{Cocharacter-closure and spherical buildings}
\author[M.\  Bate]{Michael Bate}
\address
{Department of Mathematics,
University of York,
York YO10 5DD,
United Kingdom}
\email{michael.bate@york.ac.uk}
\author[S. Herpel]{Sebastian Herpel}
\address
{Fakult\"at f\"ur Mathematik,
Ruhr-Universit\"at Bochum,
D-44780 Bochum, Germany}
\email{sebastian.herpel@rub.de}
\author[B.\ Martin]{Benjamin Martin}
\address
{Department of Mathematics,
University of Aberdeen,
King's College,
Fraser Noble Building,
Aberdeen AB24 3UE,
United Kingdom}
\email{b.martin@abdn.ac.uk}
\author[G. R\"ohrle]{Gerhard R\"ohrle}
\address
{Fakult\"at f\"ur Mathematik,
Ruhr-Universit\"at Bochum,
D-44780 Bochum, Germany}
\email{gerhard.roehrle@rub.de}
\dedicatory{In memory of Robert Steinberg}
\begin{document}

\begin{abstract}
Let $k$ be a field, let $G$ be a reductive $k$-group and
$V$ an affine $k$-variety on which $G$ acts.
In this note we continue our study of 
the notion of cocharacter-closed $G(k)$-orbits in $V$.
In earlier work we used 
a rationality condition on the point stabilizer of a $G$-orbit to 
prove Galois ascent/descent and Levi ascent/descent 
results concerning cocharacter-closure
for the corresponding $G(k)$-orbit in $V$.
In the present paper 
we employ building-theoretic techniques to derive
analogous results.
\end{abstract}

\maketitle

\setcounter{tocdepth}{1}

\section{Introduction}
\label{sec:intro}

Let $k$ be a field and let $G$ be a reductive linear algebraic group acting
on an affine variety $V$, with $G$, $V$ and the action all defined over $k$.  Let $\Delta_k$ be the (simplicial) spherical building of $G$ over $k$, and let $\Delta_k(\RR)$ be its geometric realisation (for precise definitions, see below).  In this paper we continue the study, initiated in \cite{GIT}, \cite{stcc} and \cite{BHMR:cochars},
of the notion of \emph{cocharacter-closed orbits} in $V$ for the group $G(k)$ of $k$-rational points of $G$, and of interactions with the geometry of $\Delta_k(\RR)$.  The philosophy of this paper is as follows (cf.\ \cite{BHMR:cochars}): for a point $v$ in $V$, we are interested in \emph{Galois ascent/descent} questions---given a separable algebraic extension $k'/k$ of fields, how is the $G(k')$-orbit of $v$ related to the $G(k)$-orbit of $v$?---and \emph{Levi ascent/descent} questions---given a $k$-defined torus $S$ of the stabilizer $G_v$, how is the $C_G(S)(k)$-orbit of $v$ related to the $G(k)$-orbit of $v$?  (See \cite[Sec.~5, para.~1]{BHMR:cochars} for an explanation of the terms Galois/Levi ascent/descent in this context.) 
These questions are related, and have natural interpretations in
$\Delta(\KK)$.

Our results complement those of \cite{BHMR:cochars}: they give similar conclusions but under different assumptions.  It was shown in \cite{BHMR:cochars} (see Proposition \ref{prop:galois_descent} below) that Galois descent---passing from $G(k')$-orbits to $G(k)$-orbit---is always well-behaved.
Certain results on Galois ascent were also proved \cite[Thm.\ 5.7]{BHMR:cochars} under hypotheses
on the stabilizer $G_v$.  The mantra in this paper is that when the Centre Conjecture (see Theorem~\ref{thm:centreconj} below) is known to hold, one can use it to prove Galois ascent results, and hence deduce Levi ascent/descent results.  The idea is that when the extension $k'/k$ is separable and normal, questions of Galois ascent can be interpreted in terms of the action of the Galois group of $k'/k$ on the building; moreover, if one has such Galois ascent questions under control, then it is easier to handle Levi ascent/descent because one may assume that the torus $S$ is split (cf.\ \cite[Thm.\ 5.4(ii)]{BHMR:cochars}).

When $k$ is algebraically closed (or more generally when $k$ is perfect),
our setup is also intimately related to the \emph{optimality formalism} of Kempf-Rousseau-Hesselink,
\cite{kempf}, \cite{rousseau}, \cite{He}.  Indeed, one may interpret this formalism in the language of the Centre Conjecture (see \cite[Sec.\ 1]{stcc}).  The idea is to study the $G$-orbits in $V$ via limits along cocharacters of $G$: limits are formally defined below, but given $v$ in $V$, if we take the
set of cocharacters $\lambda$ of $G$ for which the limit $\lim_{a\to 0}\lambda(a)\cdot v$ exists,
and interpret this set in terms of the set of $\QQ$-points $\Delta(\QQ)$ of the building of $G$,
then we obtain a convex subset $\Sigma_v$ of $\Delta(\QQ)$.
In case $G\cdot v$ is not Zariski-closed, one can find a fixed point in the set $\Sigma_v$
and an associated \emph{optimal parabolic subgroup} $P$ of $G$ with many nice properties:
in particular, the stabilizer $G_v$ normalizes $P$.
It is not currently known in general how to produce analogues of these optimality
results over arbitrary fields (or even whether such results exist); see \cite[Sec.\ 1]{GIT} for further discussion.
Our first main theorem gives a rational analogue of the Kempf-Rousseau-Hesselink
ideas when $\Sigma_{v,k_s}$ (the points of $\Sigma_v$ coming from $k_s$-defined
cocharacters of $G$) happens to be a subcomplex of $\Delta(\QQ)$,
and also answers in this case the ascent/descent questions posed earlier.

\begin{thm}
\label{thm:subcx}
Let $v \in V$. Suppose $\Sigma_{v,k_s}$ is a subcomplex of $\Delta_{k_s}(\QQ)$.
Then the following hold:
 \begin{itemize}
  \item[(i)] Suppose $v\in V(k)$ and
  $G(k_s)\cdot v$ is not cocharacter-closed over $k_s$.  Let $S$ be any 
  $k$-defined torus of $G_v$ and set $L= C_G(S)$.  Then
there exists $\sigma\in Y_k(L)$ such that 
  $\lim_{a\to 0} \sigma(a) \cdot v$ exists and lies outside $G(k_s) \cdot v$.
  \item[(ii)] Suppose $v \in V(k)$. For any separable algebraic extension
  $k'/k$, $G(k')\cdot v$ is cocharacter-closed over $k'$ if and only if
  $G(k) \cdot v$ is cocharacter-closed over $k$.
  \item[(iii)] Let $S$ be any $k$-defined torus of $G_v$ and set $L= C_G(S)$.  Then $G(k)\cdot v$ is cocharacter-closed over $k$ if and only if $L(k)\cdot v$ is cocharacter-closed over $k$.
\end{itemize}
\end{thm}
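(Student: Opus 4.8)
The plan is to make part~(i) the crux and deduce parts~(ii) and~(iii) from it. For~(i) I would carry out the Kempf--Rousseau--Hesselink optimisation inside the building $\Delta_{k_s}$ by invoking the Centre Conjecture. Since $G(k_s)\cdot v$ is not cocharacter-closed over $k_s$, the convex subcomplex $\Sigma_{v,k_s}$ of $\Delta_{k_s}$ is non-empty; moreover, by the analysis of \cite{stcc} relating cocharacter-closure to the geometry of the building, the failure of cocharacter-closure is exactly what obstructs $\Sigma_{v,k_s}$ from being a completely reducible subcomplex. Thus Theorem~\ref{thm:centreconj} yields a simplex $\xi$ of $\Sigma_{v,k_s}$ fixed by every simplicial automorphism of $\Delta_{k_s}$ that stabilises $\Sigma_{v,k_s}$, and (again by \cite{stcc}) $\xi$ represents a genuinely destabilising direction: any $\mu\in Y_{k_s}(G)$ with $P_\mu$ the parabolic of $\xi$ has $v':=\lim_{a\to 0}\mu(a)\cdot v$ existing and $v'\notin G(k_s)\cdot v$.

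I would then use rationality. As $v\in V(k)$, the group $\Gal(k_s/k)$ acts on $\Delta_{k_s}$ stabilising $\Sigma_{v,k_s}$, hence fixes $\xi$; so the parabolic $P:=P_\xi$ is $k$-defined and we may take $\mu\in Y_k(G)$. Likewise $G_v(k_s)$ stabilises $\Sigma_{v,k_s}$ --- conjugating a cocharacter by an element of $G_v$ leaves unchanged whether the limit of $v$ exists --- so $G_v(k_s)$ fixes $\xi$, giving $G_v(k_s)\le N_G(P)(k_s)=P(k_s)$ and hence, since $S\le G_v$ is a torus, $S\le P$. Finally, with $S$ a $k$-defined torus of $G_v$ contained in $P=P_\mu$, the by-now-standard Levi-movement lemma (see \cite{GIT}, \cite{BHMR:cochars}) produces $\sigma\in Y_k(L)$ such that $\lim_{a\to 0}\sigma(a)\cdot v$ exists and lies in $R_u(P)(k_s)\cdot v'\subseteq G(k_s)\cdot v'$. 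Since $v'\notin G(k_s)\cdot v$ and $G(k_s)\cdot v$ is a single $G(k_s)$-orbit, this forces $\lim_{a\to 0}\sigma(a)\cdot v\notin G(k_s)\cdot v$, which is~(i).

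For~(ii): Galois descent is Proposition~\ref{prop:galois_descent} and holds unconditionally, so ``$G(k')\cdot v$ cocharacter-closed over $k'$'' implies ``$G(k)\cdot v$ cocharacter-closed over $k$''. Conversely, the case $S=1$, $L=G$ of~(i) says that if $G(k)\cdot v$ is cocharacter-closed over $k$ then so is $G(k_s)\cdot v$ over $k_s$; combined with Galois descent, ``$G(k)\cdot v$ cocharacter-closed over $k$'' is equivalent to ``$G(k_s)\cdot v$ cocharacter-closed over $k_s$''. Since $(k')_s=k_s$, $v\in V(k')$, and the hypothesis on $\Sigma_{v,k_s}$ is unchanged, this equivalence also holds with $k'$ in place of $k$, and chaining the two proves~(ii). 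For~(iii): first note that the analogous subcomplex for $L$ is $\Sigma^L_{v,k_s}=\Sigma_{v,k_s}\cap\Delta^L_{k_s}(\QQ)$, an intersection of convex subcomplexes of $\Delta_{k_s}(\QQ)$ (the building of the Levi $L$ sits inside that of $G$ as a convex subcomplex), hence a subcomplex; so~(ii) applies to $L$ as well. The implication ``$G(k)\cdot v$ cocharacter-closed over $k$ $\Rightarrow$ $L(k)\cdot v$ cocharacter-closed over $k$'' is Levi descent: by~(ii) we may take $k=k_s$, so $S$ is split, and the split case is \cite[Thm.~5.4]{BHMR:cochars}. For the reverse implication, if $L(k)\cdot v$ is cocharacter-closed over $k$ but $G(k)\cdot v$ is not, then by~(ii) $G(k_s)\cdot v$ is not cocharacter-closed over $k_s$, so~(i) supplies $\sigma\in Y_k(L)$ with $\lim_{a\to 0}\sigma(a)\cdot v\notin G(k_s)\cdot v\supseteq L(k)\cdot v$, contradicting cocharacter-closure of $L(k)\cdot v$.

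The step I expect to be the main obstacle is the opening move of~(i): establishing, over $k_s$ and in the generality required here, the dictionary of \cite{stcc} between cocharacter-closure of $G(k_s)\cdot v$ and complete reducibility of $\Sigma_{v,k_s}$ --- in particular that non-closure yields a subcomplex to which Theorem~\ref{thm:centreconj} genuinely applies, and that the resulting centre represents a cocharacter whose limit lies \emph{outside} $G(k_s)\cdot v$ rather than merely one along which a limit exists. A secondary technical point is the Levi-movement step: replacing $\mu\in Y_k(G)$ with $S\le G_v\cap P_\mu$ by a cocharacter of $C_G(S)=L$ defined over $k$ without leaving the $G(k_s)$-orbit of the limit.
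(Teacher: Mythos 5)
Your high-level strategy (make~(i) the crux and derive~(ii) and~(iii) from it via Galois descent and split Levi ascent/descent) matches the paper's, but your proof of~(i) and your handling of~(iii) both have genuine gaps.

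\textbf{Part (i).} You apply the Centre Conjecture to $\Sigma_{v,k_s}$ inside $\Delta_{k_s}$ and then try to ``move'' the resulting cocharacter into $L=C_G(S)$ afterwards. This runs into exactly the obstacle you flag at the end, and you do not resolve it: a centre simplex $\xi$ of a non-completely-reducible subcomplex may well have an opposite \emph{in} $\Sigma_{v,k_s}$, so a cocharacter $\mu$ with $P_\mu = P_\xi$ need not properly destabilize $v$---it might simply fix $v$ after an $R_u(P_\mu)(k_s)$-conjugation. The paper avoids this entirely by a different order of operations. It first enlarges $S$ to a maximal $k$-defined torus of $G_v$ (note that this $S$ is also a maximal $k_s$-defined torus of $G_v$, since $\ovl{G_v(k_s)}$ is $k$-defined), sets $H=C_G(S)$, and works inside the building of $H$. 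There, because $S\subseteq Z(H^0)$ and $S$ is the unique maximal $k_s$-torus of $H_v(k_s)$, \emph{any} $\sigma\in Y_k(H)$ with $P_\sigma(H^0)$ proper that destabilizes $v$ automatically fails to fix $v$; then \cite[Lem.~2.8]{BHMR:cochars} gives that $\sigma$ properly destabilizes over $k_s$ for $G$. That is the crucial structural fact that converts a Centre-Conjecture fixed simplex into a properly destabilizing cocharacter, and your after-the-fact ``Levi-movement'' step (replacing $\mu\in Y_k(G)$ by some $\sigma\in Y_k(L)$ with the same parabolic and the same limit orbit, keeping $k$-definedness) is not a standard lemma and would need a careful proof; it is not obviously available. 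You also do not treat the degenerate case in which the only $k_s$-destabilizing directions in $H$ are central, i.e.\ $\mu\in Y_{k_s}(Z(H^0))$; then $P_\mu(H^0)=H^0$ is not a simplex and the Centre Conjecture does not apply, and the paper handles this case separately (Lemma~\ref{lem:geom_galois_ascent} and Remark~\ref{rem:central_destab}). Finally, you need $\Sigma_{v,k_s}(H)$ to be a subcomplex of $\Delta_{k_s}(H,\KK)$, not just $\Sigma_{v,k_s}$ of $\Delta_{k_s}(\KK)$; this is the content of Lemma~\ref{lem:L_subcx}, and it is not the intersection argument you sketch, since $\Delta_k(H,\KK)\subseteq\Delta_k(\KK)$ does not respect the simplicial structures (as the paper warns after Lemma~\ref{lem:subbuildings}).

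\textbf{Part (iii).} Your argument invokes parts~(i) and~(ii), both of which assume $v\in V(k)$, but part~(iii) does \emph{not} carry that hypothesis. The paper first reduces to the case $v\in V(k)$ by replacing $V$ with a suitable $G$-variety $W$ and $v$ with a $k$-point $w$ satisfying $\Sigma_{w,k_s}=\bigcap_{\gamma\in\Gamma}\gamma\cdot\Sigma_{v,k_s}$, $\Sigma_{w,k}=\Sigma_{v,k}$, and with $S$ fixing $w$ (via \cite[Lem.~6.2, Lem.~6.3, Thm.~6.1]{BHMR:cochars}); without some such reduction your proof of~(iii) is incomplete.

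\textbf{Part (ii).} Your argument here is sound and matches the paper: Galois descent is Proposition~\ref{prop:galois_descent}, and ascent follows from~(i) with $S=1$ after reducing to $k'=k_s$.
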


The hypothesis that $\Sigma_{v,k_s}$ is a subcomplex allows us to apply the following result---Tits' Centre Conjecture---in the proof of Theorem~\ref{thm:subcx}:

\begin{thm}
\label{thm:centreconj}
 Let $\Theta$ be a thick spherical building and let $\Sigma$ be a convex subcomplex of $\Theta$ such that $\Sigma$ is not completely reducible.  Then there is a simplex of $\Sigma$ that is fixed by every building automorphism of $\Theta$ that stabilizes $\Sigma$.  (We call such a simplex a {\em centre} of $\Sigma$.)
\end{thm}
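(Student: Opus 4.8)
The plan is to prove the statement by induction on the rank of $\Theta$, preceded by reductions to an irreducible situation, and then to handle the low-rank base cases and the higher-rank cases by quite different means. First I would reduce to the case where both $\Theta$ and $\Sigma$ are irreducible: if $\Theta = \Theta_1 * \Theta_2$ is a nontrivial join of thick buildings then a convex $\Sigma$ decomposes compatibly as $\Sigma_1 * \Sigma_2$ and fails to be completely reducible precisely when some $\Sigma_i$ does, and a center of $\Sigma_i$ together with all the remaining factors is a center of $\Sigma$; the same device lets one assume $\Sigma$ is not itself a proper join. So we may assume $\Theta$ is thick and irreducible and $\Sigma$ is an irreducible convex subcomplex that is not completely reducible, which by definition means there is a simplex $\sigma_0 \in \Sigma$ having no opposite in $\Sigma$; this ``defect'' simplex is the object one exploits throughout.

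In rank $1$, $\Theta$ is just a set of at least three points with opposition meaning distinctness, so a non-completely-reducible $\Sigma$ is a single point and hence its own center. The rank-$2$ case, where $\Theta$ is a thick generalized $m$-gon, is the first serious step: here one classifies the convex subcomplexes of a generalized polygon---single vertices or edges, full apartments, ``stars'' $\{p\}\cup p^{\perp}$, and tree-like point--line configurations---and verifies case by case that the ones which are not completely reducible (roughly, those containing no apartment) carry a vertex or edge fixed by every automorphism of $\Theta$, type-preserving or type-reversing, that stabilizes them.

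For rank $\geq 3$ I would invoke Tits' classification of thick irreducible spherical buildings, so that $\Theta$ is the flag complex of a projective space or a polar space (classical types $\mathsf A,\mathsf B,\mathsf C,\mathsf D$), or one of the exceptional buildings of type $\mathsf F_4, \mathsf E_6, \mathsf E_7, \mathsf E_8$. In the classical cases simplices are flags of (isotropic or singular) subspaces, $\Sigma$ acquires a linear-algebraic description, and the absence of an opposite to $\sigma_0$ forces a canonical subspace---for example the intersection, or the perp of the span, of the type-$i$ members of $\Sigma$ for a suitable $i$---which is visibly preserved by the entire stabilizer of $\Sigma$ in $\Aut(\Theta)$; combining this with the inductive hypothesis applied to residues of $\Theta$ (using the standard projection maps, which respect convexity) produces the center.

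The exceptional types are where essentially all of the difficulty lies, and I expect this to be the main obstacle. Here I would pass to the geometric realisation $\Theta(\RR)$, which is a $\mathrm{CAT}(1)$ space, and to the closed convex subset $|\Sigma|$, and exploit the geometry of convex subsets of $\mathrm{CAT}(1)$ spaces: the structure of the set of antipodes realised inside $|\Sigma|$, Busemann-type functionals along geodesics issuing from the support of $\sigma_0$, and nearest-point projections. The goal is to show that either $\Sigma$ is completely reducible after all or else it contains a nonempty convex subcomplex, invariant under every automorphism of $\Theta$ stabilizing $\Sigma$, of strictly smaller dimension; iterating this then terminates at a center. Carrying out this inductive step rigorously for $\mathsf F_4, \mathsf E_6, \mathsf E_7, \mathsf E_8$---where one is forced into case distinctions on the type of $\sigma_0$ and on the combinatorics of the rank-$2$ and rank-$3$ residues involved---is the hard part, and is the reason this result was established only through a sequence of separate works.
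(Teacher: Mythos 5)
The paper does not prove Theorem~\ref{thm:centreconj} at all: it is quoted as a known deep result, with a citation to M\"uhlherr--Tits \cite{muhlherr-tits} (non-exceptional types), Leeb--Ramos-Cuevas \cite{lrc} ($F_4$, $E_6$), Ramos-Cuevas \cite{rc} (completing the exceptional types), and M\"uhlherr--Weiss \cite{muhlherr-weiss} (a uniform proof for chamber subcomplexes). So there is no ``paper's own proof'' to compare against, and the appropriate thing for you to do here was to recognise the statement as one of the standing inputs of the paper rather than try to reprove it.

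That said, your outline does correctly identify the skeleton of the historical proof: reduce to an irreducible $\Theta$, handle rank $1$ and $2$ by direct classification, invoke Tits' classification in rank $\geq 3$, treat the classical types by flag-geometric/linear-algebraic arguments, and treat the exceptional types via the CAT$(1)$ geometry of the geometric realisation. But as a proof it has genuine gaps. The reduction to irreducibles is stated too quickly: an automorphism of $\Theta_1 * \Theta_2$ that stabilises $\Sigma$ may permute isomorphic factors, so a centre of $\Sigma_1$ alone need not be fixed, and one has to argue that either the factors cannot be swapped (because only one $\Sigma_i$ fails to be completely reducible) or take a join of centres; also it is not clear that a convex subcomplex of a join decomposes as a join of subcomplexes without an argument. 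The rank-$2$ classification of convex subcomplexes of a thick generalised $m$-gon and the case analysis are asserted, not carried out. Most seriously, the entire inductive step for $F_4, E_6, E_7, E_8$ is deferred with the admission that it is ``the hard part''---but that is precisely where the mathematical content of this theorem lives; it occupies two substantial research papers (\cite{lrc}, \cite{rc}). As it stands this is a plan for a proof, not a proof, and the plan is roughly the one the literature followed.
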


\noindent For definitions and further details, see \cite{rc}; in particular, note that the spherical building of a reductive algebraic group is thick.  The conjecture was proved by M\"uhlherr-Tits \cite{muhlherr-tits}, Leeb-Ramos-Cuevas \cite{lrc} and Ramos-Cuevas \cite{rc}, and a uniform proof for chamber subcomplexes has also now been given by M\"uhlherr-Weiss \cite{muhlherr-weiss}.  The condition that $\Sigma_{v,k_s}$ is a subcomplex is satisfied in the theory of complete reducibility for subgroups of $G$ and Lie subalgebras of $\Lie(G)$, and our results yield applications to complete reducibility (see Theorem~\ref{thm:LeviGaloisdescent-ascentforGcr} below).

It was shown in \cite[Thm.\ 5.7]{BHMR:cochars} that the conclusions of Theorem \ref{thm:subcx}(ii) and (iii) hold if $G_v$ has a maximal torus that is $k$-defined.  Our second main result gives alternative hypotheses on $G_v$, this time of a group-theoretic nature, for the conclusions of Theorem \ref{thm:subcx} to hold, without the assumption that $\Sigma_{v,k_s}$ is a subcomplex.  The proof of this result relies in an essential way on known cases of a strengthened version of the Centre Conjecture
(this time from \cite{stcc}).

\begin{thm}
\label{thm:nice_hyps}
Let $v\in V(k)$.  Suppose that
(a) $G_v^0$ is nilpotent, or
(b) every simple component of $G^0$ has rank 1.  Then the following hold:
 \begin{itemize}
  \item[(i)] Suppose $G(k_s)\cdot v$ is not cocharacter-closed over $k_s$.
  Let $S$ be any $k$-defined torus of $G_v$ and set $L= C_G(S)$.  Then 
there exists $\sigma\in Y_k(L)$ such that $G_v(k_s)$ normalizes $P_\sigma(G^0)$ and
  $\lim_{a\to 0} \sigma(a) \cdot v$ exists and lies outside $G(k_s) \cdot v$.
  \item[(ii)] For any separable algebraic extension
  $k'/k$, $G(k')\cdot v$ is cocharacter-closed over $k'$ if and only if
  $G(k) \cdot v$ is cocharacter-closed over $k$.
  \item[(iii)] Let $S$ be any $k$-defined torus of $G_v$ and set $L= C_G(S)$.  Then 
$G(k)\cdot v$ is cocharacter-closed over $k$ if and only if $L(k)\cdot v$ is cocharacter-closed over $k$.
\end{itemize}
\end{thm}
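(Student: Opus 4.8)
The plan is to prove part (i) first; parts (ii) and (iii) then follow from (i) together with Proposition~\ref{prop:galois_descent} and the Levi-descent result of \cite{BHMR:cochars}, by exactly the argument that deduces Theorem~\ref{thm:subcx}(ii),(iii) from Theorem~\ref{thm:subcx}(i). So the substance lies in (i), and the tool that replaces the hypothesis ``$\Sigma_{v,k_s}$ is a subcomplex'' of Theorem~\ref{thm:subcx} is the known case of the \emph{strengthened} Centre Conjecture of \cite{stcc}, which is available precisely when $G_v^0$ is nilpotent or every simple component of $G^0$ has rank $1$.

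For (i) I would work in the building $\Delta_{k_s}(\RR)$ with the convex subset $\Sigma := \Sigma_{v,k_s}$. Since $v \in V(k)$, the group $\Gal(k_s/k)$ permutes the $k_s$-defined cocharacters along which $v$ has a limit, so it acts on $\Sigma$ by building automorphisms stabilising it; so does $G_v(k_s)$, since $G_v$ fixes $v$. Because $G(k_s)\cdot v$ is not cocharacter-closed over $k_s$, the dictionary between cocharacter-closure and complete reducibility (\cite{stcc}, cf.\ \cite{GIT}) tells us that $\Sigma$ is not completely reducible. Under hypothesis (a) or (b) the known case of the strengthened Centre Conjecture then applies, in its invariant-theoretic form, to produce $\sigma_0 \in Y_{k_s}(G)$ such that $G_v(k_s)$ normalises $P_{\sigma_0}(G^0)$ and $\lim_{a\to 0}\sigma_0(a)\cdot v$ exists and lies outside $G(k_s)\cdot v$; and since $\Gal(k_s/k)$ is among the building automorphisms stabilising $\Sigma$, the parabolic $Q := P_{\sigma_0}(G^0)$ can be chosen $\Gal(k_s/k)$-stable, hence $k$-defined.

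It remains to make the cocharacter $k$-rational and to push it into $L = C_G(S)$. Fix a $k$-defined Levi subgroup $M$ of $Q$. Conjugating $\sigma_0$ by a suitable element of $R_u(Q)(k_s)$ we may assume $\sigma_0 \in Y_{k_s}(Z(M))$ with $P_{\sigma_0}(G^0)=Q$; by $R_u(Q)$-equivariance of the limit map the limit is merely translated by that element, so it still lies outside $G(k_s)\cdot v$. The cocharacters $\lambda$ of $Z(M)$ with $P_\lambda(G^0) = Q$ on which, in addition, the weight-support of $v$ (a $\Gal(k_s/k)$-stable set, as $v\in V(k)$) is non-negative form a non-empty, convex, $\Gal(k_s/k)$-stable cone containing $\sigma_0$; summing the Galois conjugates of $\sigma_0$ therefore gives a Galois-fixed $\sigma' \in Y_k(G)$ in this cone, so $P_{\sigma'}(G^0)=Q$ and $\lim_{a\to 0}\sigma'(a)\cdot v$ exists, and an orbit-dimension argument (using density of $G(k_s)$ in $G$ and the fact that this limit is a further degeneration of $\lim_{a\to 0}\sigma_0(a)\cdot v$) shows it still lies outside $G(k_s)\cdot v$. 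Finally, $S$ is a connected subgroup normalising $Q$, so $S \le Q$ and hence $S$ lies in a $k$-defined Levi subgroup of $Q$; conjugating $\sigma'$ by an appropriate $u \in R_u(Q)(k)$ gives $\sigma := u\cdot\sigma' \in Y_k(C_G(S)) = Y_k(L)$ with $P_\sigma(G^0) = Q$ (still normalised by $G_v(k_s)$) and $\lim_{a\to 0}\sigma(a)\cdot v = u\cdot\lim_{a\to 0}\sigma'(a)\cdot v \notin G(k_s)\cdot v$. This proves (i); for (ii), combine (i) in the case $S=1$, $L=G$ with Proposition~\ref{prop:galois_descent}, and for (iii) combine (i), (ii) and Levi descent, in both cases exactly as in the proof of Theorem~\ref{thm:subcx}.

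I expect the main obstacle to be the middle passage above: extracting from the strengthened Centre Conjecture a cocharacter whose limit really does leave $G(k_s)\cdot v$ (rather than just a $G_v$-normalised parabolic), and then descending its field of definition to $k$ without losing that property. This is exactly where hypotheses (a) and (b) enter, via the known cases of \cite{stcc}, and it calls for careful bookkeeping with the geometry of $\Sigma_{v,k_s}$, the Galois action and orbit dimensions --- including the usual subtleties coming from non-smooth stabilisers in positive characteristic.
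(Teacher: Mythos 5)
Your high-level strategy is right, but there is a genuine gap at the very step you yourself flag as ``the main obstacle'': hypotheses (a) and (b) do \emph{not} make any known case of the strong Centre Conjecture directly applicable to $\Sigma := \Sigma_{v,k_s}$. The set $\Sigma$ is in general neither a subcomplex (that would be Theorem~\ref{thm:subcx}) nor contained in a single apartment, and those are the only situations in which \cite{stcc} proves the strong Centre Conjecture for arbitrary convex sets. The whole point of the paper's proof is a preliminary reduction that converts (a)/(b) into a single-apartment situation: one factors out the product $N$ of simple factors of $G^0$ fixing $v$ and a suitable central torus $S_0$, passes to $H_1 = H/NS_0$ with $H = N_{N_G(S_0)}(N)$, and uses the fact that under (a) or (b) the stabilizer $(H_1)_v^0$ is nilpotent, hence has a unique maximal torus $S_1'$. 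That uniqueness forces every cocharacter of $H_1$ that destabilizes $v$ without properly destabilizing it to be $R_u$-conjugate into $S_1'$, which is what puts $\Sigma_{v,k_s}(H_1)$ inside a single apartment $\Delta_{k_s}(T_1,\QQ)$ and makes Theorem~\ref{thm:singleapt} available. Without this reduction you have no version of the Centre Conjecture to invoke.

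A second, related gap is your claim that the strong Centre Conjecture ``produces $\sigma_0$ such that \dots $\lim_{a\to 0}\sigma_0(a)\cdot v$ exists and lies outside $G(k_s)\cdot v$.'' The Centre Conjecture only produces a fixed point of $\Sigma$, i.e.\ a cocharacter that destabilizes; the paper explicitly remarks (just after Theorem~\ref{thm:singleapt}) that one does not know \emph{a priori} that this cocharacter properly destabilizes $v$. Establishing proper destabilization is precisely where the nilpotency/unique-maximal-torus structure created by the $H_1$-reduction is used again: one shows the resulting cocharacter cannot fix $v$ (else a Galois-stable torus of the stabilizer would strictly enlarge $S$, contradicting maximality) and then invokes \cite[Lem.\ 2.8]{BHMR:cochars} --- which requires the cocharacter to commute with $S$ --- to upgrade ``does not fix $v$'' to ``properly destabilizes over $k_s$.'' Your proposed ``orbit-dimension argument'' for the Galois-averaged cocharacter $\sigma'$ is not a substitute: a further degeneration of a limit outside the orbit could a priori fall back into $G(k_s)\cdot v$, and one needs the commutation with $S$ plus the non-fixing property, not a dimension count. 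The Galois-averaging idea, the use of a common $k$-defined torus to make the conjugates commute, and the final conjugation into $L$ are all compatible with the paper's line of argument, but the two gaps above are the substance of the proof and are missing.
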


The hypothesis in Theorem \ref{thm:subcx}(i) that $v$ is a $k$-point ensures that
the subset $\Sigma_v$ is Galois-stable, and it is also needed in our proof of Theorem~\ref{thm:nice_hyps} (but see Remark~\ref{rem:arbpt}).
Sometimes, however, one can get away with a weaker hypothesis.
This happens for $G$-complete reducibility
in the final section of the paper, where we prove the following ascent/descent result:

\begin{thm}
\label{thm:LeviGaloisdescent-ascentforGcr}
Suppose that $G$ is connected.
Let $H$ be a subgroup of $G$.
Let $S$ be a $k$-defined torus of $C_G(H)$ and set $L= C_G(S)$.
Then $H$ is $G$-completely reducible over $k$ if and only if
$H$ is $L$-completely reducible over $k$.
\end{thm}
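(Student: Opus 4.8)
The plan is to reduce Theorem~\ref{thm:LeviGaloisdescent-ascentforGcr} to the orbit-theoretic statement of Theorem~\ref{thm:nice_hyps}(iii), or rather to the mechanism behind it, applied to a suitable affine $G$-variety encoding the subgroup $H$. Recall the standard translation in the theory of $G$-complete reducibility: if $H$ is topologically generated by $h_1,\dots,h_n$, one forms the diagonal action of $G$ by simultaneous conjugation on $G^n$, and then $H$ is $G$-cr over $k$ (in the sense of Serre, via the building) precisely when the $G(k)$-orbit of $\mathbf{h}=(h_1,\dots,h_n)$ is cocharacter-closed over $k$; this is the content of the results of \cite{GIT} relating $G$-cr-ness to cocharacter-closed orbits. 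The point $\mathbf{h}$ need not be a $k$-point of $G^n$, which is why the hypothesis ``$v\in V(k)$'' in Theorem~\ref{thm:nice_hyps} cannot be invoked verbatim; this is flagged in the paragraph preceding the statement (``one can get away with a weaker hypothesis''). So the first step is to set up this dictionary carefully and identify exactly which part of the proof of Theorem~\ref{thm:nice_hyps}(iii) uses the $k$-rationality of $v$ and which part only uses that $\Sigma_v$ is Galois-stable.

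Next I would observe that for this particular variety $V=G^n$ with the conjugation action, the stabilizer $G_{\mathbf h}=C_G(H)$, and its identity component need not be nilpotent nor need $G$ have rank-$1$ simple components, so hypotheses (a),(b) of Theorem~\ref{thm:nice_hyps} are irrelevant here. What saves us is that for $G$-complete reducibility the relevant subset $\Sigma_{\mathbf h}$ of the building is genuinely a subcomplex — indeed it is the fixed-point set of $H$ acting on $\Delta$, which is always a (convex) subcomplex — so we are really in the situation governed by Tits' Centre Conjecture directly, as in Theorem~\ref{thm:subcx}. Thus the proof should run parallel to that of Theorem~\ref{thm:subcx}(iii): given the $k$-defined torus $S\le C_G(H)=G_{\mathbf h}$ and $L=C_G(S)$, the ``only if'' direction is a general Levi-descent fact (a cocharacter of $L$ is a cocharacter of $G$, and destabilizing over $k$ via $L$ destabilizes via $G$), while for the ``if'' direction one assumes $H$ is not $G$-cr over $k$, i.e. $G(k)\cdot\mathbf h$ is not cocharacter-closed over $k$, and one wants a destabilizing cocharacter lying in $Y_k(L)$. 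One first passes to $k_s$ using Galois descent (Proposition~\ref{prop:galois_descent}) to get that $G(k_s)\cdot\mathbf h$ is not cocharacter-closed over $k_s$, using that $\mathbf h\in G^n(k)$ so $\Sigma_{\mathbf h}$ is $\Gal(k_s/k)$-stable; then one applies the Centre Conjecture to the convex subcomplex $\Sigma_{\mathbf h,k_s}$ (which is non-trivial exactly because the orbit is not closed, and is not completely reducible) to extract a Galois-fixed centre, hence a $k$-defined parabolic $P=P_\sigma(G)$ normalized by $G_{\mathbf h}$ and in particular by $S$; since $S$ is a torus normalizing $P$ it lies in a common Levi, and one can then choose the destabilizing cocharacter $\sigma$ inside $Y_k(L)$ by the same argument as in \cite[Thm.~5.4]{BHMR:cochars} used to prove Theorem~\ref{thm:subcx}(i)–(iii).

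The main obstacle is precisely the gap between ``$\mathbf h\in G^n(k)$'' and the genuine $k$-rationality of a point of $V$ — one must check that Galois-stability of $\Sigma_{\mathbf h}$ is all that is needed in the chain of implications, and in particular that the Galois-descent step and the extraction of a $k$-defined parabolic go through. I expect this to be handled by a direct appeal to the characterisation of $G$-cr-ness over $k$ in terms of the building together with the fact that $G$ acts on $\Delta_{k_s}$ with $\Gal(k_s/k)$ acting compatibly, so that $H\le G(k)$ — or rather, $H$ being $k$-defined — forces $\Sigma_{\mathbf h}$ to be Galois-stable even though $\mathbf h$ itself is only defined over $\bar k$ in general. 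A secondary point to verify is that the equivalence between non-cocharacter-closedness of $G(k)\cdot\mathbf h$ and non-$G$-cr-ness over $k$ is robust under the field extension $k\rightsquigarrow k_s$ and under replacing $G$ by $L$; this is essentially \cite[Thm.~5.4(ii)]{BHMR:cochars} combined with the results of \cite{GIT}, and once in place the argument closes up exactly as for Theorem~\ref{thm:subcx}.
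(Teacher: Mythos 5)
Your proposal identifies the right reduction (generic/generating tuple for $H$, translation of $G$-cr over $k$ into cocharacter-closedness via Theorem~\ref{thm:crvscocharclosed}, the observation that $\Sigma_{\tuple{h}}$ is a convex subcomplex because it is the fixed-point set of $H$, and the identity $C_G(H)=G_{\tuple{h}}$), and it correctly lands on Theorem~\ref{thm:subcx} rather than Theorem~\ref{thm:nice_hyps} as the appropriate tool. In that sense it matches the paper's proof, which is just: let $\tuple{h}$ be a generic tuple of $H$, then $\Sigma_{\tuple{h},k_s}$ is a subcomplex and $C_G(H)=G_{\tuple{h}}$, now apply Theorem~\ref{thm:crvscocharclosed} and Theorem~\ref{thm:subcx}(iii).

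However, the ``main obstacle'' you flag at the end — that $\tuple{h}$ need not be a $k$-point of $G^n$ so one must re-examine Galois descent and the extraction of a $k$-defined parabolic — is not actually an obstacle at this level, and your discussion of it contains an internal inconsistency (you first correctly note that $\tuple{h}$ need not lie in $G^n(k)$, but later invoke ``$\mathbf h\in G^n(k)$ so $\Sigma_{\mathbf h}$ is $\Gal(k_s/k)$-stable''). The point is that, unlike parts (i) and (ii), Theorem~\ref{thm:subcx}(iii) carries no hypothesis $v\in V(k)$: the reduction to a $k$-point (passing to an auxiliary $G$-variety $W$ and a $k$-point $w$ with $\Sigma_{w,k_s}=\bigcap_{\gamma\in\Gamma}\gamma\cdot\Sigma_{v,k_s}$, using \cite[Sec.~6]{BHMR:cochars}) is performed once and for all \emph{inside} the proof of Theorem~\ref{thm:subcx}(iii), so in the proof of Theorem~\ref{thm:LeviGaloisdescent-ascentforGcr} there is nothing further to check about rationality of $\tuple{h}$. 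One more small point: you should take $\tuple{h}$ to be a \emph{generic tuple} in the sense of \cite[Def.~5.4]{GIT} rather than merely a topologically generating tuple; this is the hypothesis under which Theorem~\ref{thm:crvscocharclosed} holds and under which $C_G(H)=G_{\tuple{h}}$ and the subcomplex property of $\Sigma_{\tuple{h}}$ are guaranteed.
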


\begin{rem}
\label{rem:LeviGaloisdescent-ascentforGcr}
(i). Theorem \ref{thm:LeviGaloisdescent-ascentforGcr}
gives an alternative proof and also slightly generalizes
Serre's Levi ascent/descent result
\cite[Prop.\ 3.2]{serre1.5}; cf.\ \cite[Cor.\ 3.21, Cor.\ 3.22]{BMR}.
For in the statement of Theorem \ref{thm:LeviGaloisdescent-ascentforGcr},
we do not require $H$ to be a subgroup of $G(k)$.

(ii). The counterpart of Theorem \ref{thm:subcx}(ii) (Galois ascent/descent for $G$-complete reducibility) was proved in \cite{BMR:tits}.
\end{rem}

We spend much of the paper recalling relevant results from geometric invariant theory and the theory of buildings.  Although the basic ideas are familiar, we need to extend many of them: for instance, the material on quasi-states in Section~\ref{sec:stcc} was covered in \cite{stcc} for algebraically closed fields, but we need it for arbitrary fields.  We work with the geometric realisations of buildings rather than with buildings as abstract simplicial complexes; some care is needed when the reductive group $G$ has positive-dimensional centre.

The paper is laid out as follows. In Section \ref{sec:prelims}, we set up notation and collect
terminology and results relating to cocharacter-closedness.
In Section \ref{sec:TCC}, we translate our setup into the language of spherical buildings; we use notation and results from \cite{stcc} on buildings, some of which we extend slightly.
In Section \ref{sec:tccproofs}, we combine the technology from both of the preceding sections to give proofs of our main results.
In the final section we give our applications to the theory of complete reducibility.

\section{Notation and preliminaries}
\label{sec:prelims}

Let $k$ denote a field with separable closure $k_s$ and algebraic closure $\ovl{k}$.
Let $\Gamma:=\Gal(k_s/k) = \Gal(\ovl{k}/k)$ denote the Galois group of $k_s/k$.
Throughout, $G$ denotes a (possibly non-connected) reductive linear algebraic group defined
over $k$,
and $V$ denotes a $k$-defined affine variety upon which $G$ acts $k$-morphically.
Let $G(k)$, $G(k_s)$, $V(k)$, $V(k_s)$ denote the $k$- and $k_s$-points of $G$ and $V$;
we usually identify $G$ with $G(\ovl{k})$ and $V$ with $V(\ovl{k})$.  If $X$ is a variety then we denote its Zariski closure by $\ovl{X}$.

More generally, we need to consider $k$-points and $k_s$-points in subgroups that are not necessarily $k$-defined or $k_s$-defined; note that if $k$ is not perfect then even when $v$ is a $k$-point, the stabilizer $G_v$ need not be $k$-defined.  If $k'/k$ is an algebraic field extension and $H$ is a closed subgroup of $G$ then we set $H(k')= H(\ovl{k})\cap G(k')$, and we say that a torus $S$ of $H$ is $k'$-defined if it is $k'$-defined as a torus of the $k$-defined group $G$.  Note that a $k_s$-defined torus of $H$ is a torus of $\ovl{H(k_s)}$.

\subsection{Cocharacters and $G$-actions}

Given a $k$-defined algebraic group $H$, we let $Y(H)$ denote the set of cocharacters of $H$,
with $Y_{k}(H)$ and $Y_{k_s}(H)$ denoting the sets of $k$-defined and $k_s$-defined cocharacters, respectively.
The group $H$ acts on $Y(H)$ via the conjugation action of $H$ on itself.
This gives actions of the group of $k$-points $H(k)$ on $Y_{k}(H)$ and the group of $k_s$-points $H(k_s)$ on $Y_{k_s}(H)$.
There is also an action of the Galois group $\Gamma$ on $Y(H)$ which stabilizes $Y_{k_s}(H)$,
and the $\Gamma$-fixed elements of $Y_{k_s}(H)$ are precisely the elements of $Y_{k}(H)$.
We write $Y = Y(G)$, $Y_k = Y_{k}(G)$ and $Y_{k_s} = Y_{k_s}(G)$.

\begin{defn}\label{defn:norm}
A function $\left\|\,\right\|:Y \to \RR_{\geq0}$ is called a \emph{$\Gamma$-invariant, $G$-invariant norm} if:
\begin{itemize}
\item[(i)] $\left\| g\cdot\lambda\right\| =\left\|\lambda\right\|=\left\|\gamma\cdot\lambda\right\|$ for all
$\lambda \in Y$, $g\in G$ and $\gamma \in \Gamma$;
\item[(ii)] for any maximal torus $T$ of $G$, there is a positive definite integer-valued form
$(\ ,\ )$ on $Y(T)$ such that $(\lambda,\lambda) = \left\|\lambda\right\|^2$ for any $\lambda\in Y(T)$.
\end{itemize}
\end{defn}

Such a norm always exists.  For take a $k$-defined maximal torus
$T$ and any positive definite integer-valued form on $Y(T)$.
Since $T$ splits over a finite extension of $k$, we can average the form over the Weyl group $W$
and over the finite Galois group of the extension to obtain a $W$-invariant $\Gamma$-invariant form
on $Y(T)$, which defines a norm satisfying (ii).
One can extend this norm to all of $Y$ because any cocharacter is $G$-conjugate to one in $Y(T)$;
this procedure is well-defined since the norm on $Y(T)$ is $W$-invariant.
See \cite{kempf} for more details.  If $G$ is simple then $\left\|\,\right\|$ is unique up to nonzero scalar multiples.  We fix such a norm once and for all.

For each cocharacter $\lambda \in Y$ and each $v\in V$, we
define a morphism of varieties
$\phi_{v,\lambda}:\ovl{k}^* \to V$ via the formula
$\phi_{v,\lambda}(a) = \lambda(a)\cdot v$.
If this morphism extends to a morphism
$\widehat\phi_{v,\lambda}:\ovl{k} \to V$, then
we say that $\underset{a\to 0}{\lim} \lambda(a) \cdot v$ exists,
and set this limit
equal to $\widehat\phi_{v,\lambda}(0)$; note that such an extension,
if it exists, is necessarily unique.

\begin{defn}
For $\lambda \in Y$ and $v\in V$, we say that
\emph{$\lambda$ destabilizes $v$} provided
$\underset{a\to 0}{\lim} \lambda(a) \cdot v$ exists, and if
$\underset{a\to 0}{\lim} \lambda(a) \cdot v$ exists
and does not belong to $G\cdot v$, then we say \emph{$\lambda$ properly destabilizes $v$}.
We have an analogous notion over $k$: 
if $\lambda \in Y_k$ then we say that $\lambda$ \emph{properly destabilizes $v$ over $k$} if $\underset{a\to 0}{\lim}\lambda(a)\cdot v$ exists and does not belong to $G(k)\cdot v$. 
Finally, if $k'/k$ is an algebraic extension, and $\lambda \in Y_k$, then we say that 
$\lambda$ \emph{properly destabilizes $v$ over $k'$} if 
$\underset{a\to 0}{\lim}\lambda(a)\cdot v$ exists and does not belong to $G(k')\cdot v$;
that is, if $\lambda$ -- regarded as an element of $Y_{k'}(G)$ -- properly destabilizes $v$ over $k'$.
\end{defn}

\subsection{R-parabolic subgroups}

When $V=G$ and $G$ is acting by conjugation,
for each $\lambda \in Y$ we get a set
$P_\lambda:=\{g\in G\mid \lim_{a\to0} \lambda(a)g\lambda(a)\inverse \textrm{ exists}\}$;
this is a parabolic subgroup of $G$.
We distinguish these parabolic subgroups
by calling them \emph{Richardson-parabolic} or \emph{R-parabolic} subgroups.
For basic properties of these subgroups, see \cite[Sec.\ 6]{BMR}.
We recall here that $L_\lambda = C_G(\IM(\lambda))$ is called an \emph{R-Levi subgroup} of $P_\lambda$,
$R_u(P_\lambda)$ is the set of elements sent to $1\in G$ in the limit,
and $P_\lambda = R_u(P_\lambda) \rtimes L_\lambda$.
Further, $R_u(P_\lambda)$ acts simply transitively on the set of all $L_\mu$ such that $P_\mu = P_\lambda$
(that is, on the set of all R-Levi subgroups of $P_\lambda$):
note that this is a transitive action of $R_u(P_\lambda)$
on the set of \emph{subgroups} of the form $L_\mu$, not on the set of
cocharacters for which $P_\mu = P_\lambda$.
Most of these things work equally well over the field $k$: for example, if $\lambda$ is
$k$-defined then $P_\lambda$, $L_\lambda$ and $R_u(P_\lambda)$ are;
moreover, given any $k$-defined R-parabolic subgroup $P$,
$R_u(P)(k)$ acts simply transitively on the set of $k$-defined R-Levi subgroups of $P$
\cite[Lem.\ 2.5]{GIT}.  Note that if $P$ is $k$-defined and $G$ is connected then $P= P_\lambda$ for some $k$-defined $\lambda$, but this can fail if $G$ is not connected \cite[Sec.\ 2]{GIT}.

When $H$ is a reductive subgroup of $G$ the inclusion $Y(H)\subseteq Y(G)$ means that we
get an R-parabolic subgroup of $H$ and of $G$ attached to any $\lambda\in Y(H)$.
When we use the notation $P_\lambda$, $L_\lambda$, etc., we are always thinking of
$\lambda$ as a cocharacter of $G$.
If we need to restrict attention to the subgroup $H$ for some reason,
we write $P_\lambda(H)$, $L_\lambda(H)$, etc.

\subsection{The sets $Y(\QQ)$ and $Y(\RR)$}
\label{subsec:Y}

Form the set $Y(\QQ)$ by taking the quotient of $Y\times \NN_0$ by the relation
$\lambda \sim \mu$ if and only if $n\lambda = m\mu$ for some $m,n \in \NN$, and
extend the norm function to $Y(\QQ)$ in the obvious way.
For any torus $T$ in $G$ , $Y(T, \QQ) := Y(T) \otimes_\ZZ \QQ$ is a vector space over $\QQ$.
Now one can form real spaces $Y(T, \RR) := Y(T, \QQ) \otimes_\QQ\RR$ for each
maximal torus $T$ of $G$ and a set $Y(\RR)$ by glueing the $Y(T, \RR)$ together according to the way the spaces $Y(T, \QQ)$ fit together \cite[Sec.\ 2.2]{stcc}.
The norm extends to these sets.
One can define sets $Y_k(\QQ)$, $Y_k(\RR)$, $Y_{k}(T, \QQ)$, $Y_{k}(T, \RR)$,
 $Y_{k_s}(\QQ)$, $Y_{k_s}(\RR)$, $Y_{k_s}(T, \QQ)$ and $Y_{k_s}(T, \RR)$ analogously by restricting
attention to $k$-defined cocharacters and maximal tori, or $k_s$-defined cocharacters and maximal tori,
as appropriate.
For the rest of the paper, $\KK$ denotes either of $\QQ$ or $\RR$ when the distinction is not important.
The sets $Y(\KK)$, $Y_k(\KK)$ and $Y_{k_s}(\KK)$ inherit $G$-, $G(k)$-, $G(k_s)$- and $\Gamma$-actions
from those on $Y$, $Y_k$ and $Y_{k_s}$, as appropriate, and
each element $\lambda \in Y(\KK)$ still corresponds to an R-parabolic subgroup $P_\lambda$
and an R-Levi subgroup $L_\lambda$ of $G$ (see \cite[Sec.\ 2.2]{stcc} for the case $\KK= \RR$).  If $H$ is a reductive subgroup of $G$ then we write $Y(H, \QQ)$, etc., to denote the above constructions for $H$ instead of $G$.

\subsection{$G$-varieties and cocharacter-closure}

We recall the following fundamental definition from \cite[Def.\ 1.2]{BHMR:cochars},
which extends the one given in \cite[Def.\ 3.8]{GIT}.

\begin{defn}
A subset $S$ of $V$ is said to be \emph{cocharacter-closed over $k$ (for $G$)} if for
every $v\in S$ and $\lambda \in Y_k$ such that $v':=\lim_{a\to 0} \lambda(a)\cdot v$ exists,
we have $v'\in S$.
\end{defn}

This notion is explored in detail in \cite{BHMR:cochars}.
In this section, we content ourselves with collecting some results from that paper,
together with the earlier paper \cite{GIT}.
These results, most of which are also needed in the sequel,
give a flavour of what is known about the notion of cocharacter-closure
in the case that the subset involved is a single $G(k)$-orbit.

\begin{rem}
\label{rem:HMT}
 The geometric orbit $G\cdot v$ is Zariski-closed
if and only if it is cocharacter-closed over $\ovl{k}$,
by the Hilbert-Mumford Theorem \cite[Thm.\ 1.4]{kempf}.
\end{rem}

The next result is \cite[Cor. 5.1]{BHMR:cochars}.

\begin{thm}\label{thm:mainconjcocharclosed}
Suppose $v\in V$ is such that $G(k)\cdot v$ is cocharacter-closed over $k$.
Then whenever $v'=\lim_{a\to 0} \lambda(a)\cdot v$ exists for some $\lambda \in Y_k$,
there exists $u\in R_u(P_\lambda)(k)$ such that $v' = u\cdot v$.
\end{thm}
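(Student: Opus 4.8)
The plan is to peel off the rationality first, reducing to a purely geometric assertion about limits along cocharacters and $R$-parabolic subgroups, and then to prove that assertion by a $\GG_m$-equivariance computation inside a ``big cell''. Write $v':=\lim_{a\to 0}\lambda(a)\cdot v$. Cocharacter-closedness over $k$ forces $v'\in G(k)\cdot v$, say $v'=g\cdot v$ with $g\in G(k)$; and since $\lambda(b)\cdot v'=\lim_{a\to 0}\lambda(ba)\cdot v=v'$, the point $v'$ is fixed by $\IM(\lambda)$. Hence $\mu:=g\inverse\cdot\lambda\in Y_k$ has $\IM(\mu)\subseteq G_v$, and $P_\lambda=gP_\mu g\inverse$, $R_u(P_\lambda)=gR_u(P_\mu)g\inverse$, also at the level of $k$-points (as $g\in G(k)$). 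Putting $x:=g\inverse\cdot v$, we get $\lim_{a\to 0}\mu(a)\cdot x=g\inverse\cdot v'=v$, the point $v$ is $\IM(\mu)$-fixed, and $v=g\cdot x\in G\cdot x$. Conjugating the target by $g$, the desired conclusion $v'\in R_u(P_\lambda)(k)\cdot v$ is equivalent to the existence of $w\in R_u(P_\mu)(k)$ with $w\cdot x=v$, so the whole statement follows from the following claim, applied with $(y,\nu,z)=(x,\mu,v)$: \emph{if $y\in V$ and $\nu\in Y_k$ are such that $z:=\lim_{a\to 0}\nu(a)\cdot y$ exists (so $z$ is $\IM(\nu)$-fixed) and $z\in G\cdot y$, then $z\in R_u(P_\nu)\cdot y$, and if in addition $y\in V(k)$ then $z\in R_u(P_\nu)(k)\cdot y$.} When $v\in V(k)$ one has $x\in V(k)$, and the rational part of the claim finishes the proof; when $v\notin V(k)$ one runs the argument over $k_s$ and descends to $k$ using the $\Gamma$-action on the building together with the simply transitive action of $R_u(P_\lambda)(k)$ on the $k$-defined $R$-Levi subgroups of $P_\lambda$, in the spirit of \cite{BHMR:cochars}.

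To prove the claim, pick $h\in G$ with $h\cdot y=z$ (possible since $z\in G\cdot y$), and assume first that $h$ lies in the big cell $R_u(P_{-\nu})\cdot L_\nu\cdot R_u(P_\nu)$ of $G$ relative to $\nu$, writing $h=u^{-}l u^{+}$ accordingly with $u^{\pm}\in R_u(P_{\pm\nu})$, $l\in L_\nu$. Applying $\nu(a)$ to both sides of $z=u^{-}l u^{+}\cdot y$, using $\nu(a)\cdot z=z$, and pushing $\nu(a)$ past $lu^{+}\in P_\nu$, gives, for every $a$,
\[
 z=\bigl(\nu(a)\,u^{-}\,\nu(a)\inverse\bigr)\cdot\Bigl(\bigl(\nu(a)\,lu^{+}\,\nu(a)\inverse\bigr)\cdot\nu(a)\cdot y\Bigr),
\]
where the inner vector tends to $l\cdot z$ as $a\to 0$ (because $\nu(a)\,lu^{+}\,\nu(a)\inverse\to l$ and $\nu(a)\cdot y\to z$). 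Rewriting the identity, the inner vector equals $\nu(a)\cdot\bigl((u^{-})\inverse\cdot z\bigr)$, which therefore has a limit as $a\to 0$; but $(u^{-})\inverse\cdot z$ lies in the orbit $R_u(P_{-\nu})\cdot z$, which is closed (orbits of unipotent groups on affine varieties are closed) and on which $\nu$ acts with strictly negative weights, so such a limit can exist only if $(u^{-})\inverse\cdot z$ is $\nu$-fixed. Then $l\cdot z=(u^{-})\inverse\cdot z$, i.e.\ $u^{-}l\in G_z$, whence $h=(u^{-}l)\,u^{+}\in G_z\cdot R_u(P_\nu)$; writing $h=kw$ with $k\in G_z$, $w\in R_u(P_\nu)$, we obtain $w\cdot y=k\inverse\cdot z=z$, as required. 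The rational refinement then follows from Rosenlicht's theorem, since $R_u(P_\nu)$ is a $k$-split unipotent group acting $k$-morphically on $V$ and $y\in V(k)$.

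The gap, and what I expect to be the main obstacle, is that $h$ need not lie in the big cell relative to $\nu$. To finish one must either move $h$ within its coset $h\,G_y$ into suitable position with respect to $\nu$, or argue Bruhat cell by Bruhat cell, or reduce to the $R$-Levi $L_\nu$ acting on $V^{\IM(\nu)}$, and pushing the $\GG_m$-weight estimate through in that generality---a rational incarnation of the Kempf--Rousseau--Hesselink optimality formalism---is the real work. Everything else, in particular the passage from geometric orbits to $k$-points, is comparatively routine.
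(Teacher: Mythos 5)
The paper does not prove this result; it is cited as \cite[Cor.~5.1]{BHMR:cochars}, which is essentially the main theorem of that reference and is obtained there only after substantial preparatory work. Your proposal has two genuine gaps. The first you identify yourself, and it is fatal as written: the transporting element $h$ need not lie in the big cell $R_u(P_{-\nu})L_\nu R_u(P_\nu)$, and you supply no mechanism for moving it there within its $G_y$-coset or for running a Bruhat-cell induction. Moreover, the geometric assertion you are after---that if $z=\lim_{a\to 0}\nu(a)\cdot y$ lies in $G\cdot y$ then $z\in R_u(P_\nu)\cdot y$---is precisely \cite[Thm.~3.3]{GIT}, which the present paper invokes in Remark~\ref{rem:central_destab}; there is no need to re-derive it.

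The second gap is the rationality, which is the whole point of the theorem. The statement concerns an arbitrary $v\in V$ and asks for $u\in R_u(P_\lambda)(k)$, not merely $u\in R_u(P_\lambda)$. Your appeal to Rosenlicht plus Galois-cohomology vanishing needs (a) $v\in V(k)$, and (b) the transporter $\{u\in R_u(P_\lambda)\mid u\cdot v=v'\}$ to be a $k$-defined torsor under a $k$-defined, $k$-split unipotent group. Neither is given: $v$ is only assumed to lie in $V$, and even when $v\in V(k)$ the stabilizer $(R_u(P_\lambda))_v$ need not be $k$-defined when $k$ is imperfect---exactly the phenomenon flagged at the start of Section~\ref{sec:prelims}. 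The one-sentence suggestion to ``run the argument over $k_s$ and descend via the $\Gamma$-action on the building'' does not fill this: the $\Gamma$-action on $\Delta_{k_s}$ does not hand you a $\Gamma$-fixed element of $R_u(P_\lambda)(k_s)$ implementing the required identity. Producing such a $u\in R_u(P_\lambda)(k)$ is exactly what \cite[Cor.~5.1]{BHMR:cochars} accomplishes, using Levi and Galois descent and a careful analysis of unipotent stabilizers that goes well beyond the $\GG_m$-equivariance computation you set up.
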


The next result is \cite[Prop.\ 5.5]{BHMR:cochars}.

\begin{prop}\label{prop:galois_descent}
Let $v\in V$ such that $G_v(k_s)$ is $\Gamma$-stable and let $k'/k$ be
a separable algebraic extension.
If $G(k')\cdot v$ is cocharacter-closed over $k'$, then
$G(k)\cdot v$ is cocharacter-closed over $k$.
\end{prop}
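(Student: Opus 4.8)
The plan is to reduce, by homogeneity of the orbit, to the following: whenever $\lambda\in Y_k$ and $v':=\lim_{a\to 0}\lambda(a)\cdot v$ exists, then $v'\in G(k)\cdot v$. This reduction is routine, since for $g\in G(k)$ one has $\lim_{a\to 0}\mu(a)\cdot(g\cdot v)=g\cdot\lim_{a\to 0}(g^{-1}\mu g)(a)\cdot v$ with $g^{-1}\mu g\in Y_k$, so only limits taken from $v$ itself matter. Now fix such a $\lambda$. Since $k'/k$ is separable we have $k'\subseteq k_s$ and $\lambda\in Y_{k'}$, and as $G(k')\cdot v$ is cocharacter-closed over $k'$ we get $v'\in G(k')\cdot v$; applying Theorem~\ref{thm:mainconjcocharclosed} over $k'$ then produces $u\in R_u(P_\lambda)(k')\subseteq R_u(P_\lambda)(k_s)$ with $v'=u\cdot v$. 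The task is now to replace $u$ by an element of $G(k)$ still carrying $v$ to $v'$, and the leverage is that $u$ lies in the $k$-\emph{split} connected unipotent group $R_u(P_\lambda)$, whose Galois cohomology vanishes.

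Next I would run a Galois-descent argument. Suppose first that $v\in V(k)$. Then $v'$ is again a $k$-point, being the value at $0$ of the ($k$-defined, by uniqueness) extension of $\phi_{v,\lambda}$. For $\gamma\in\Gamma$ the element ${}^{\gamma}u$ again lies in $R_u(P_\lambda)(k_s)$ (as $P_\lambda$, hence $R_u(P_\lambda)$, is $k$-defined), and applying $\gamma$ to $u\cdot v=v'$ and using $v,v'\in V(k)$ we get ${}^{\gamma}u\cdot v=v'$. Hence $c_\gamma:=u^{-1}\,{}^{\gamma}u$ lies in $\bigl(R_u(P_\lambda)\cap G_v\bigr)(k_s)$. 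The hypothesis that $G_v(k_s)$ is $\Gamma$-stable is exactly what makes $\bigl(R_u(P_\lambda)\cap G_v\bigr)(k_s)$ a group carrying a $\Gamma$-action and $\gamma\mapsto c_\gamma$ a continuous $1$-cocycle; moreover this cocycle is the coboundary of $u$, so its class is trivial in $H^1(\Gamma,R_u(P_\lambda)(k_s))$. Equivalently, the coset $u\cdot\bigl(R_u(P_\lambda)\cap G_v\bigr)$ is $\Gamma$-stable, so it suffices to find a $k$-rational representative $u_0\in R_u(P_\lambda)(k)$ of it; writing $u_0=u\,n$ with $n\in\bigl(R_u(P_\lambda)\cap G_v\bigr)(k_s)$ then gives $u_0\cdot v=v'$, so $v'\in R_u(P_\lambda)(k)\cdot v\subseteq G(k)\cdot v$. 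Put concretely, the orbit $R_u(P_\lambda)\cdot v$ is Zariski-closed by the Kostant--Rosenlicht theorem and is $k$-defined, and what is needed is the identity $(R_u(P_\lambda)\cdot v)(k)=R_u(P_\lambda)(k)\cdot v$.

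I expect the main obstacle to be precisely this descent step over an imperfect field $k$: the scheme-theoretic stabilizer $R_u(P_\lambda)\cap G_v$ need not be smooth, so the vanishing of $H^1$ for split unipotent groups does not apply directly; instead one must argue by induction along the $k$-defined central filtration of $R_u(P_\lambda)$ with $\GG_a$-quotients, at each stage using that an element of $k$ which becomes a $p^n$-th power in $k_s$ already lies in $k^{p^n}$. Finally, for a general $v\in V$ the limit $v'$ need no longer be $\Gamma$-fixed; here one twists, running the same computation with the $\Gamma$-action altered by the cocycle attached to $v$, or after replacing $G_v$ by the $k$-group $\ovl{G_v(k_s)}$. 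In either guise it is the $\Gamma$-stability of $G_v(k_s)$ that is indispensable, since it is what allows $\Gamma$ to act on the relevant stabilizer in the first place.
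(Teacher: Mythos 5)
The paper itself quotes this result from \cite{BHMR:cochars} without reproducing the proof, so a line-by-line comparison is not possible; I assess your argument on its own terms. Your opening moves are sound: reducing to a single $\lambda\in Y_k$ based at $v$, applying Theorem~\ref{thm:mainconjcocharclosed} over $k'$ to get $u\in R_u(P_\lambda)(k')$ with $u\cdot v=v'$, and recognising that the crux is a Galois-descent statement for the orbit $R_u(P_\lambda)\cdot v$.

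The gaps appear precisely where you flag trouble. The identity you declare is needed, $(R_u(P_\lambda)\cdot v)(k)=R_u(P_\lambda)(k)\cdot v$, is \emph{false} in general over an imperfect $k$: take $G=\GL_2$ in characteristic $p$, acting on the $p$-th symmetric power of the standard module, with $\lambda=\diag(a,1)$, $U=R_u(P_\lambda)$ the strictly upper triangular group and $v=e_2^p$; then $u_b\cdot v=b^pe_1^p+e_2^p$, so $(U\cdot v)(k)$ is parametrised by all of $k$ while $U(k)\cdot v$ is parametrised by $k^p$. What your argument actually needs is the weaker containment $R_u(P_\lambda)(k_s)\cdot v\cap V(k)\subseteq R_u(P_\lambda)(k)\cdot v$ for $v\in V(k)$, and the separability is doing genuine work there (in the example the purely inseparable element $b=c^{1/p}$ lies outside $k_s$); it does not follow from Kostant--Rosenlicht plus $k$-definedness of the orbit. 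Even for that weaker statement, the proposed induction ``along a central $\GG_a$-filtration of $R_u(P_\lambda)$'' is not set up: a central $\GG_a$ in $R_u(P_\lambda)$ need not fix $v$, so one cannot pass to the quotient group acting on $V$, and $R_u(P_\lambda)\cap G_v$ is in general neither normal in $R_u(P_\lambda)$ nor compatible with the filtration, so the induction does not proceed as described. Finally, for general $v\in V$ (the proposition makes no rationality assumption on $v$) the limit $v'$ need not lie in $V(k_s)$ at all, so the cocycle $\gamma\mapsto u^{-1}\,{}^\gamma u$ has no meaning, and ``twisting by the cocycle attached to $v$'' is left unexplained. These are substantive gaps: the intuition is right and the framing via $R_u(P_\lambda)$-orbits is the correct one, but the crucial descent step over an imperfect field and the reduction from general $v$ to $v\in V(k)$ both remain to be proved.
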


The next result is \cite[Thm.\ 5.4]{BHMR:cochars}.

\begin{thm}
\label{thm:leviascentdescent}
Suppose $S$ is a $k$-defined torus of $G_v$ and set $L = C_G(S)$.
\begin{itemize}
\item[(i)] If $G(k)\cdot v$ is cocharacter-closed over $k$, then $L(k)\cdot v$ is cocharacter-closed over $k$.
\item[(ii)] If $S$ is $k$-split, then $G(k)\cdot v$ is cocharacter-closed over $k$ if and only if $L(k)\cdot v$ is cocharacter-closed over $k$.
\end{itemize}
\end{thm}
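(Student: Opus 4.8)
The plan is to prove (i) by combining Theorem~\ref{thm:mainconjcocharclosed} with a rationality argument for trivialising a cocycle, and then to deduce (ii) from (i) together with a destabilisation-relative-to-$S$ argument over $k$. For (i): since $L(k)$ acts on $Y_k(L)$ by conjugation compatibly with its action on $V$, it suffices to show that if $\mu\in Y_k(L)$ and $v':=\lim_{a\to0}\mu(a)\cdot v$ exists, then $v'\in L(k)\cdot v$. Regarding $\mu$ as a $k$-defined cocharacter of $G$ and using that $G(k)\cdot v$ is cocharacter-closed over $k$, Theorem~\ref{thm:mainconjcocharclosed} gives $u\in R_u(P_\mu)(k)$ with $v'=u\cdot v$. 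Since the image of $\mu$ centralises $S$, the limit $v'$ is fixed by $S$; combined with $S\subseteq G_v$ this yields $S\subseteq G_v\cap G_{v'}$, and a short computation shows that the morphism $c\colon S\to R_u(P_\mu)$ given by $c(s)=u^{-1}sus^{-1}$ takes values in $Z:=R_u(P_\mu)\cap G_v$ and is a $1$-cocycle for the conjugation action of $S$ on $R_u(P_\mu)$. If $c$ is a coboundary via some $z\in Z(k)$, i.e.\ $c(s)=z(szs^{-1})^{-1}$ for all $s\in S$, then $uz$ centralises $S$ and lies in $G(k)$, hence $uz\in L(k)$, while $uz\cdot v=u\cdot v=v'$ because $z\in Z\subseteq G_v$; so $v'\in L(k)\cdot v$, as required.

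The heart of (i) is therefore to trivialise $c$ over $k$. Over $\ovl{k}$ (indeed already over $k_s$) the torus $S$ is split, hence linearly reductive; filtering $R_u(P_\mu)$, and with it $Z$, by $S$-stable normal subgroups with vector-group quotients and running the standard d\'evissage for non-abelian cohomology, one trivialises $c$ over $k_s$ because $H^1$ of a split torus with coefficients in a vector group with rational action vanishes. The set of trivialising elements is then a torsor under the $S$-fixed part $C_{R_u(P_\mu)}(S)\cap G_v$, and since $C_{R_u(P_\mu)}(S)$ is a $k$-split unipotent group, a further d\'evissage built on the vanishing of $H^1(\Gamma,\GG_a)$ (the normal basis theorem) descends a trivialising element to $k$. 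The main obstacle is the careful bookkeeping of rationality here: when $k$ is imperfect $G_v$ need not be $k$-defined, and may even be non-smooth, so one cannot simply quote vanishing of $H^1$ for a $k$-group; instead one checks that the fibre $uZ$ and the torsor of trivialising elements are $\Gamma$-stable cosets of the $k$-split unipotent group $C_{R_u(P_\mu)}(S)$ inside the $k$-defined group $R_u(P_\mu)$, and carries the descent through with this extra care.

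For (ii), the forward implication is (i). For the reverse I will argue the contrapositive: if $G(k)\cdot v$ is not cocharacter-closed over $k$ then neither is $L(k)\cdot v$. Pick $\lambda\in Y_k$ properly destabilising $v$ over $k$, with $v':=\lim_{a\to0}\lambda(a)\cdot v$ lying outside $G(k)\cdot v$. Using that $S$ is $k$-split and fixes $v$, I will produce $\mu\in Y_k(L)$ — that is, with image centralising $S$ — such that $\lim_{a\to0}\mu(a)\cdot v$ exists and lies in $G(k)\cdot v'$, hence outside $G(k)\cdot v$ and \emph{a fortiori} outside $L(k)\cdot v$; this shows $L(k)\cdot v$ is not cocharacter-closed over $k$. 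When $S$ already lies in $P_\lambda$ one can take $\mu=u\lambda u^{-1}$ with $u\in R_u(P_\lambda)(k)$ chosen so that $S\subseteq uL_\lambda u^{-1}$: then $P_\mu=P_\lambda$, the image of $\mu$ centralises $S$, and $\lim_{a\to0}\mu(a)\cdot v=u\cdot v'$ because $u^{-1}\in R_u(P_\lambda)$ contributes trivially in the limit. In general $\mu$ is obtained from the optimisation/convexity arguments of \cite{GIT} carried out over $k$ (projecting $\lambda$, inside the convex set of destabilising cocharacters, towards the face determined by $S$), and this may change the R-parabolic attached to $\lambda$. The main obstacle in (ii) is precisely this $k$-rational construction of $\mu$: the hypothesis that $S$ is $k$-split is what makes the relevant building-theoretic projection $k$-defined, and without it the natural destabilising cocharacter that the construction produces need not lie in $Y_k$.
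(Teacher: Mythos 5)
Note first that the present paper does not prove this theorem: it is quoted verbatim from \cite[Thm.~5.4]{BHMR:cochars} (see the sentence ``The next result is \cite[Thm.\ 5.4]{BHMR:cochars}'' preceding the statement), so there is no in-paper proof against which to compare your argument. Judged on its own terms, your outline contains two genuine gaps.

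For part~(i), the cocycle $c(s)=u^{-1}sus^{-1}$ with values in $Z=R_u(P_\mu)\cap G_v$ and the reduction to trivialising $c$ is a sensible way to argue over $\ovl k$, where $S$ is split and linear reductivity applies. The descent to $k$ is, however, exactly where the difficulty lies, and your proposed fix does not close it. The torsor of trivialising elements $z\in Z$ is a coset of $C_{R_u(P_\mu)}(S)\cap G_v$, \emph{not} of $C_{R_u(P_\mu)}(S)$. If you enlarge to the full coset $\{w\in R_u(P_\mu): c(s)=w(w^s)^{-1}\}$ of $C_{R_u(P_\mu)}(S)$ by dropping the constraint $w\in G_v$, you can indeed use that $C_{R_u(P_\mu)}(S)=R_u(P_\mu(L))$ is $k$-defined and $k$-split to find a $\Gamma$-fixed $w$; but then $uw\in L(k)$ may fail to send $v$ to $v'$, precisely because $w$ need not lie in $G_v$. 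The actual ambiguity group $C_{R_u(P_\mu)}(S)\cap G_v$ can fail to be $k$-defined, smooth, or even $k_s$-split when $k$ is imperfect, so one cannot simply quote vanishing of $H^1$; no replacement for that step is supplied. Thus the heart of~(i) remains open in your write-up.

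For part~(ii), the ascent direction is the substantive one and is not established. The special case you treat presupposes $S\subseteq P_\lambda$, which is not justified: $S\subseteq G_v$ together with $\lambda$ destabilising $v$ does not force $S$ into $P_\lambda$. The general case is delegated to ``optimisation/convexity arguments of \cite{GIT} carried out over $k$'' and a ``$k$-defined building-theoretic projection,'' but these are precisely the sorts of $k$-rational optimality constructions that, as the Introduction of the present paper stresses, are not currently available over arbitrary fields; invoking them without a concrete construction is circular. A correct ascent argument has to exploit the $k$-split hypothesis on $S$ explicitly --- e.g.\ choose $\sigma\in Y_k(S)$ with $L_\sigma=L$ as in \cite[Lem.~2.5]{BHMR:cochars}, place a suitable $R_u(P_\lambda)(k)$-conjugate of $\lambda$ in a common $k$-torus with $\sigma$, and compare limits along $\lambda$ with limits along $n\sigma+\lambda$ using identities like $P_{n\sigma+\lambda}=P_\lambda(L_\sigma)R_u(P_\sigma)$ (cf.\ the proof of Lemma~\ref{lem:L_subcx}) --- and your sketch does not carry this out.
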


We note that, as described in the introduction, one of the main points of this paper is to show
that the converse of Proposition \ref{prop:galois_descent} holds under certain extra hypotheses,
and that the hypothesis of splitness can be removed in Theorem \ref{thm:leviascentdescent}(ii)
under the same hypotheses;
see also  \cite[Thm.\ 1.5]{BHMR:cochars}.

Our final result is a strengthening of \cite[Lem.\ 5.6]{BHMR:cochars}.

\begin{lem}
\label{lem:geom_galois_ascent}
Let $V$ be an affine $G$-variety over $k$ and let $v\in V(k)$.
Suppose there exists $\lambda\in Y_{k_s}$
such that $\lambda$ properly destabilizes $v$.
Then there exists $\mu\in Y_k$ such that $v'= \lim_{a\to 0} \mu(a)\cdot v$ exists, $v'$ is not $G(k_s)$-conjugate to $v$ and $G_v(k_s)$ normalizes $P_\mu$.  In particular, $G(k)\cdot v$ is not cocharacter-closed over $k$.
\end{lem}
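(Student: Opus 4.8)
The plan is to take $\mu$ to be a Kempf--Rousseau--Hesselink \emph{optimal} destabilizing cocharacter for $v$ and then to push it down to a $k$-defined cocharacter, using that $v$ is a $k$-point to force the whole optimal datum to be Galois-stable. First set up the optimality problem: by hypothesis $v':=\lim_{a\to 0}\lambda(a)\cdot v$ exists and lies in $Z:=\overline{G\cdot v}\setminus(G\cdot v)$, which is a nonempty closed $G$-stable subset of $V$ with $v\notin Z$; in particular $G\cdot v$ is not Zariski-closed. Working over $\overline{k}$, the optimality formalism (\cite{kempf}, \cite{GIT}; cf.\ the Introduction) attaches to $v$ relative to $Z$ a canonical optimal class $\Lambda=\Lambda(v)$ of destabilizing cocharacters and a canonical optimal parabolic $P=P(v)$ with the standard properties: $P=P_\mu$ for every $\mu\in\Lambda$; for $\mu\in\Lambda$ the limit $\lim_{a\to0}\mu(a)\cdot v$ exists, lies in $Z$, and hence lies outside $G\cdot v$ and \emph{a fortiori} outside $G(k_s)\cdot v$; $G_v$, and therefore $G_v(k_s)$, normalizes $P$; and the primitive cocharacters in $\Lambda$ form a single $R_u(P)$-orbit on which $R_u(P)$ acts freely (if $u\in R_u(P)$ fixes such a $\mu$ by conjugation then $u\in R_u(P)\cap L_\mu=\{1\}$).

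Now descend to $k$. Since $v\in V(k)$ is $\Gamma$-fixed and the assignment $v\mapsto(P(v),\Lambda(v))$ is canonical, $P$ and $\Lambda$ are $\Gamma$-stable; combined with the rationality statements of \cite{kempf} and Galois descent along $k_s/k$ this yields that $P$ is $k$-defined. Fix a $k$-defined R-Levi subgroup $L$ of $P$ (when $G$ is connected, $P=P_\nu$ for some $\nu\in Y_k$ and $L=L_\nu$ does the job; in general one uses the corresponding statement for $k$-defined R-parabolics, or passes to $G^0$). The map $\mu\mapsto L_\mu$ is an $R_u(P)$-equivariant bijection from the primitive cocharacters in $\Lambda$ onto the R-Levi subgroups of $P$, so exactly one primitive $\mu_1\in\Lambda$ has $L_{\mu_1}=L$; then $\IM\mu_1\subseteq Z(L)^0$, so $\mu_1$ and its positive rational multiples all lie in $Y(Z(L)^0,\QQ)$. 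This ray is $\Gamma$-stable, and since $\Gamma$ preserves the fixed norm (and $L$, hence $Z(L)^0$, is $k$-defined) it is \emph{pointwise} $\Gamma$-fixed; thus $\mu_1\in Y_k(Z(L)^0,\QQ)\subseteq Y_k(\QQ)$. Clearing denominators produces $\mu\in Y_k$ with $P_\mu=P$. Then $P_\mu=P$ is normalized by $G_v(k_s)$, and $v':=\lim_{a\to0}\mu(a)\cdot v$ exists and is not $G(k_s)$-conjugate to $v$, as required. The final assertion is then immediate: $v\in V(k)$, $\mu\in Y_k$, and $v'\notin G(k_s)\cdot v\supseteq G(k)\cdot v$, so $\mu$ properly destabilizes $v$ over $k$ and $G(k)\cdot v$ is not cocharacter-closed over $k$.

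The hard part is making the optimality formalism, and above all its rationality aspects, available over the possibly imperfect fields $k$ and $k_s$ --- precisely the sort of question the Introduction flags as delicate over general fields, so one should not expect to import a full rational optimality theory off the shelf. The safest route is to bootstrap from the already-established \cite[Lem.~5.6]{BHMR:cochars}, which supplies \emph{some} $\mu_0\in Y_k$ properly destabilizing $v$ over $k_s$; then only the unconditional optimality over $\overline{k}$ is needed to produce the canonical $P(v)$ together with its $G_v$-normalization and $\Gamma$-stability, after which the descent argument above replaces $\mu_0$ by a $\mu$ whose parabolic is normalized by $G_v(k_s)$. A secondary wrinkle is that $G$ may be disconnected, so a $k$-defined R-parabolic of $G$ need not be $P_\lambda$ for any $k$-defined $\lambda$ (Section~\ref{sec:prelims}); this is harmless here because $Y_k(G)=Y_k(G^0)$, $R_u(P_\mu)\subseteq G^0$, and $G_v(k_s)$ normalizes $P_\mu(G)$ if and only if it normalizes $P_\mu(G^0)$, so one may run the argument inside $G^0$ throughout.
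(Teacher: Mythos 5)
Your proposal takes a genuinely different route from the paper.  The paper's own proof simply invokes the argument of \cite[Lem.~5.6]{BHMR:cochars}, which (in keeping with the Galois-averaging technique used repeatedly later in this paper, e.g.\ in the proof of Theorem~\ref{thm:nice_hyps} via Lemma~\ref{lem:sameapt}) produces the $k$-defined $\mu$ by summing the finitely many $\Gamma$-conjugates of a suitable $k_s$-defined destabilizing cocharacter inside a common maximal torus.  You instead try to invoke the Kempf--Rousseau--Hesselink optimal parabolic $P(v)$ over $\overline{k}$ and then descend it to $k$.  Unfortunately this approach has a genuine gap.

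The descent step does not work over a general $k$.  You argue: $v$ is a $k$-point, the assignment $v\mapsto P(v)$ is canonical, so $P(v)$ is $\Gamma$-stable, and then ``combined with the rationality statements of \cite{kempf} and Galois descent along $k_s/k$ this yields that $P$ is $k$-defined.''  But $P(v)$ is a priori a $\overline{k}$-subgroup, and $\Gamma$-stability of a $\overline{k}$-subvariety does \emph{not} imply $k$- (or even $k_s$-) definedness when $k$ is imperfect: for example, in $\SL_2$ over $k=\FF_p(u)$ the Borel subgroup corresponding to the point $(u^{1/p}:1)\in\mathbb{P}^1(\overline{k})$ is $\Gamma$-stable but not defined over $k_s$.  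Kempf's rationality theorem is proved only for perfect fields, and the Introduction to this paper explicitly notes that it is not currently known how to produce such rational optimality results over arbitrary fields.  So ``the rationality statements of \cite{kempf}'' are precisely what one does \emph{not} have available here; one cannot import them.  Your ``safest route'' bootstrap from \cite[Lem.~5.6]{BHMR:cochars} does not fix this, because you still invoke $k$-definedness of $P(v)$ to pick a $k$-defined $R$-Levi $L$ of it, to get a $\Gamma$-stable ray in $Y(Z(L)^0,\QQ)$, and hence a $k$-defined $\mu$.  Without knowing $P(v)$ is $k_s$-defined, the rest of the descent collapses.

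To repair the argument without the optimality machinery, you should follow the style of the paper and of \cite[Lem.~5.6]{BHMR:cochars}: start from a $k_s$-defined $\mu_0$ that properly destabilizes $v$, exploit that $G_v(k_s)$ (or at least the relevant centralizer) normalizes $P_{\mu_0}$, choose a $k$-defined maximal torus of $P_{\mu_0}$ and conjugate $\mu_0$ into it, then sum the finitely many $\Gamma$-conjugates $\mu_0^{(1)},\dots,\mu_0^{(r)}$ to produce $\mu\in Y_k$, and check (using Lemma~\ref{lem:sameapt} and \cite[Lem.~2.8]{BHMR:cochars}) that $\mu$ properly destabilizes $v$ over $k_s$ and that $G_v(k_s)$ normalizes $P_\mu$.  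This is the approach the paper intends when it says ``this follows from the arguments of \cite[Lem.~5.6]{BHMR:cochars}.''
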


\begin{proof}
 This follows from the arguments of  \cite[Lem.\ 5.6]{BHMR:cochars}.  
\end{proof}

\begin{rem}
\label{rem:central_destab}
 The hypotheses of Lemma~\ref{lem:geom_galois_ascent} are satisfied if $\lambda\in Y_{k_s}(Z(G^0))$ destabilizes $v$ but does not fix $v$.  For if $v':= \lim_{a\to 0} \lambda(a)\cdot v$ is $G$-conjugate to $v$ then $v'$ is $R_u(P_\lambda)$-conjugate to $v$ \cite[Thm.\ 3.3]{GIT}; but $R_u(P_\lambda)= 1$, so this cannot happen.
\end{rem}

\section{Spherical buildings and Tits' Centre Conjecture}
\label{sec:TCC}

The simplicial building $\Delta_k$ of a semisimple algebraic group $G$ over $k$ is a simplicial complex, the simplices of which correspond to the $k$-defined parabolic subgroups of $G$ ordered by reverse inclusion.  See \cite[$\S$5]{tits74} for a detailed description.  Our aim in this section is to construct for an arbitrary reductive group $G$ over $k$, objects $\Delta_k(\KK)$ for $\KK= \RR$ or $\QQ$ that correspond to the geometric realization of the spherical building of $G^0$ over $k$ (or the set of $\QQ$-points thereof) when $G^0$ is semisimple. These are slightly more general objects (possibly with a contribution from $Z(G^0)$) when $G^0$ is reductive.  Recall that $\Gamma$ denotes the Galois group of $k_s/k$.
Most of the notation and terminology below is developed in full detail in the paper \cite{stcc}---we point the reader in particular to the constructions in \cite[Sec.~2, Sec.~6.3, Sec.~6.4]{stcc}.
For the purposes of this paper, we need to extend some of the results in \emph{loc.~cit.}
(for example by incorporating the effect of the Galois group $\Gamma$),
but rather than reiterating all the details, we just gather enough
material to make our exposition here coherent.

\subsection{Definition of $\Delta_k(\KK)$}

We first form the \emph{vector building} $V_k(\KK)$ by identifying $\lambda$
in $Y_k(\KK)$ with $u\cdot \lambda$ for every $u \in R_u(P_\lambda)(k)$.
The norm function on $Y_k(\KK)$ descends to $V_k(\KK)$, because it is $G$-invariant.
This gives a well-defined function on $V_k(\KK)$, which we also call a norm,
and makes $V_k(\KK)$ into a metric space.

\begin{defn}
\label{defn:buildings}
\begin{itemize}
\item[(i)] Define $\Delta_k(\RR)$ to be the unit sphere in $V_k(\RR)$ and $\Delta_k(\QQ)$ to be the projection
of $V_k(\QQ)\setminus\{0\}$ onto $\Delta_k(\RR)$.
\item[(ii)] Two points of $\Delta_k(\KK)$ are called \emph{opposite} if they are antipodal on the sphere $\Delta_k(\RR)$.
\item[(iii)] It is clear that the conjugation action of $G(k)$ on $Y_k$ gives rise to an action of $G(k)$ on $\Delta_k(\KK)$ by isometries, and there is a natural $G(k)$-equivariant, surjective map $\zeta:Y_k(\KK)\setminus\{0\} \to \Delta_k(\KK)$.
\item[(iv)] The \emph{apartments} of $\Delta_k(\KK)$ are the sets $\Delta_{k}(T, \KK):=\zeta(Y_{k}(T, \KK))$ where $T$
runs over the maximal $k$-split tori of $G$.
\item[(v)] The metric space $\Delta_k(\KK)$ and its apartments have a simplicial structure, because any point $x= \zeta(\lambda)$ of $\Delta_k(\KK)$ gives rise to a $k$-defined parabolic subgroup $P_\lambda$ of $G^0$ (see Section~\ref{subsec:Y}); the simplicial complex consists of the proper $k$-defined parabolic subgroups of $G^0$, ordered by reverse inclusion.  
We write $\Delta_k$ for the spherical building of $G$ over $k$ 
regarded purely as a simplicial complex.  
The simplicial spherical buildings of $G^0$ and of $[G^0,G^0]$ are the same.  Our notion of opposite is compatible with the usual one for parabolic subgroups: if $\lambda\in Y(G)$ then $P_{-\lambda}$ is an opposite parabolic to $P_\lambda$.
\end{itemize}
To avoid tying ourselves in knots,
when the distinction is not important to the discussion at hand,
we loosely refer to either of the objects $\Delta_k(\QQ)$ and $\Delta_k(\RR)$ as \emph{the building of $G$ over $k$}.
\end{defn}

One can make analogous definitions of objects $\Delta_{k_s}(\KK)$ and $\Delta(\KK)=\Delta_{\ovl{k}}(\KK)$
over $k_s$ and $\ovl{k}$, respectively, with corresponding systems of apartments and maps $\zeta$.
We write $\Delta_{k_s}$ and $\Delta$ for these spherical buildings regarded as simplicial complexes. 

Because we are interested in rationality results, we need to know the relationship between $\Delta_k(\KK)$
and $\Delta_{k_s}(\KK)$.
Given a $k$-defined reductive subgroup $H$ of $G$, we also want to relate
$\Delta_{k}(H, \KK)$ to $\Delta_k(\KK)$, where $\Delta_{k}(H, \KK)$ denotes the
building of $H$ over $k$.  It is easy to see that the $\Gamma$-action on cocharacters descends (via $\zeta$) to $\Gamma$-actions by isometries on $\Delta_{k_s}(\KK)$ and $\Delta(\KK)$.

\begin{lem}
\label{lem:subbuildings}
\begin{itemize}
\item[(i)] There are naturally occurring copies of $\Delta_k(\KK)$ inside $\Delta_{k_s}(\KK)$ and $\Delta(\KK)$.
We can in fact identify $\Delta_k(\KK)$ with the set of $\Gamma$-fixed points of $\Delta_{k_s}(\KK)$.
\item[(ii)] Let $H$ be a $k$-defined reductive subgroup of $G$.
Then there is a naturally occurring copy of $\Delta_{k}(H, \KK)$ inside $\Delta_{k}(\KK)$.
\end{itemize}
\end{lem}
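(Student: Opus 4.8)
The plan is to trace each assertion back to the corresponding statement about cocharacters and $R$-parabolic subgroups, since the objects $\Delta_k(\KK)$ are built from $Y_k(\KK)$ by the explicit recipe in Definition \ref{defn:buildings}. For part (i), the inclusion $Y_k \hookrightarrow Y_{k_s}$ of cocharacter lattices extends to $Y_k(\KK) \hookrightarrow Y_{k_s}(\KK)$ and to the corresponding vector buildings; one needs to check that the identification $\lambda \sim u\cdot\lambda$ for $u \in R_u(P_\lambda)(k)$ used to form $V_k(\KK)$ is compatible with the analogous identification over $k_s$. The key point here is that if $\lambda, \mu \in Y_k(\KK)$ become identified in $V_{k_s}(\KK)$ — meaning $P_\lambda = P_\mu$ and they differ by an element of $R_u(P_\lambda)(k_s)$ — then in fact they already become identified in $V_k(\KK)$, because $R_u(P_\lambda)(k)$ acts simply transitively on the $k$-defined $R$-Levi subgroups of the $k$-defined $R$-parabolic $P_\lambda$ (cited from \cite[Lem.\ 2.5]{GIT}), and both $L_\lambda, L_\mu$ are $k$-defined. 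This gives the injective map $\Delta_k(\KK) \to \Delta_{k_s}(\KK)$ (similarly into $\Delta(\KK)$), and one checks it respects the metric and simplicial structures since the norm and the passage $x \mapsto P_x$ are defined the same way on both sides.

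For the identification of $\Delta_k(\KK)$ with the $\Gamma$-fixed points of $\Delta_{k_s}(\KK)$: the $\Gamma$-fixed elements of $Y_{k_s}$ are exactly $Y_k$ by the remarks in Section \ref{sec:prelims}, and this passes to $Y_{k_s}(\KK)$. A point $x = \zeta(\lambda) \in \Delta_{k_s}(\KK)$ is $\Gamma$-fixed iff for each $\gamma \in \Gamma$ we have $\gamma\cdot\lambda \sim \lambda$ in $V_{k_s}(\KK)$, i.e.\ $P_{\gamma\cdot\lambda} = P_\lambda$ and $\gamma\cdot\lambda$ differs from $\lambda$ by an element of $R_u(P_\lambda)(k_s)$. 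From $P_{\gamma\cdot\lambda} = \gamma\cdot P_\lambda = P_\lambda$ for all $\gamma$ we deduce $P_\lambda$ is $\Gamma$-stable, hence $k$-defined; then using simple transitivity of $R_u(P_\lambda)(k_s)$ on $k_s$-defined $R$-Levi subgroups together with a Galois-cohomology / averaging argument — or more directly the fact that a $\Gamma$-fixed point of the building of the unipotent-radical torsor has a $k$-rational representative — one produces $\mu \in Y_k(\KK)$ with $\zeta(\mu) = x$. I would phrase this last step as: the $\Gamma$-action on the fibre $\zeta\inverse(x) \cap Y_{k_s}(\KK)$ is, after fixing $L_\lambda$, an affine action of $\Gamma$ on the $k_s$-points of a split unipotent group with a fixed point, which therefore has a $k$-point since $H^1(\Gamma, U)$ vanishes for split unipotent $U$ (Galois cohomology of affine spaces).

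For part (ii): a $k$-defined reductive subgroup $H$ of $G$ has $Y_k(H) \subseteq Y_k(G)$, and this induces $Y_k(H,\KK) \hookrightarrow Y_k(G,\KK)$; the norm on $Y(G)$ restricts to a valid norm on $Y(H)$, so the metric is preserved. One must check that the identification forming $V_k(H,\KK)$ is the restriction of the one forming $V_k(G,\KK)$: if $\lambda, \mu \in Y_k(H,\KK)$ satisfy $P_\mu(G) = P_\lambda(G)$ with $\mu = u\cdot\lambda$ for $u \in R_u(P_\lambda(G))(k)$, one wants $P_\mu(H) = P_\lambda(H)$ and $\mu = u'\cdot\lambda$ for some $u' \in R_u(P_\lambda(H))(k)$. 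This is where I expect the main subtlety to lie — the naive restriction of $u$ need not land in $H$. The standard resolution is that $P_\lambda(H) = P_\lambda(G) \cap H$ and $R_u(P_\lambda(H)) = R_u(P_\lambda(G)) \cap H$ (see \cite[Sec.\ 6]{BMR}), and that $L_\lambda(H) = L_\lambda(G) \cap H$ is an $R$-Levi of $P_\lambda(H)$; then $\mu$ and $\lambda$ both defining the same $R$-parabolic of $H$ forces their difference into $R_u(P_\lambda(H))(k)$ by simple transitivity within $H$. So the map $\Delta_k(H,\KK) \to \Delta_k(G,\KK)$ is well-defined and injective, and it is simplicial because the parabolic subgroup attached to a point of $\Delta_k(H,\KK)$ is exactly the intersection with $H$ of the one attached to its image. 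The hard part throughout is bookkeeping the compatibility of the unipotent-radical quotients across the three settings ($k$ versus $k_s$, and $H$ versus $G$); once the simple-transitivity statements from \cite{GIT} and \cite{BMR} are in hand, the rest is formal.
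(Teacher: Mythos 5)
Your part (i) argument for the inclusion $\Delta_k(\KK)\hookrightarrow\Delta_{k_s}(\KK)$ follows the paper's route (simple transitivity of $R_u(P_\lambda)(k)$ on $k$-defined R-Levis, via \cite[Lem.\ 2.5]{GIT}). For the identification of $\Delta_k(\KK)$ with the $\Gamma$-fixed points of $\Delta_{k_s}(\KK)$ you take a genuinely different route: you extract from the $\Gamma$-action on the fibre $\zeta^{-1}(x)$ a Galois $1$-cocycle with values in $R_u(P)(k_s)$ and invoke vanishing of $H^1(\Gamma, R_u(P)(k_s))$ for a $k$-split unipotent group. This is correct: $P$ is $\Gamma$-stable and hence $k$-defined, so $R_u(P)$ is $k$-split unipotent; $x$ descends from a finite extension, so the cocycle is continuous; and the $H^1$ vanishes. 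The paper instead picks a $k$-defined maximal torus $T$ of $P$, finds a representative of $x$ in $Y_{k_s}(T,\KK)$, and gets $\Gamma$-invariance of that representative from the \emph{freeness} of the $R_u(P)$-action on maximal tori of $P$. Both arguments are valid; yours trades the elementary torus/freeness argument for Galois cohomology.

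Part (ii) has a genuine gap. You correctly identify the crux: if $\lambda,\mu\in Y_k(H,\KK)$ satisfy $\mu=u\cdot\lambda$ with $u\in R_u(P_\lambda(G))(k)$, one must show they are already $R_u(P_\lambda(H))(k)$-conjugate, and indeed the given $u$ need not lie in $H$. But your proposed fix does not close this. Simple transitivity inside $H$ produces a $u'\in R_u(P_\lambda(H))(k)$ with $u'L_\lambda(H)u'^{-1}=L_\mu(H)$, and nothing more: it says nothing about whether $u'\cdot\lambda=\mu$. Two cocharacters can have the same R-parabolic and the same R-Levi in $H$ without being equal, so conjugating Levis is strictly weaker than conjugating cocharacters; the step ``forces their difference into $R_u(P_\lambda(H))(k)$'' is not a valid deduction. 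The paper does not attempt to derive this from simple transitivity; it cites \cite[Lem.\ 3.3(i)]{BMRT:relative}, whose content is exactly that $R_u(P_\lambda(G))(k)$-conjugacy of $k$-defined cocharacters of $H$ implies $R_u(P_\lambda(H))(k)$-conjugacy, and explicitly flags this as ``slightly more subtle.'' One way to see why the cited lemma is true (and why your argument is missing an idea): the conjugating element is forced to be $u=\lim_{a\to 0}\mu(a)\lambda(a)^{-1}$, and since $\mu(a)\lambda(a)^{-1}\in H$ for all $a\neq 0$ and $H$ is closed in $G$, the limit lies in $H$. Without that lemma or an argument of this kind, the map $\Delta_k(H,\KK)\to\Delta_k(\KK)$ has not been shown to be well-defined (let alone injective).
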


\begin{proof}
It is clear that $Y_k(\KK)\subseteq Y_{k_s}(\KK)\subseteq Y(\KK)$,
and $Y_k(\KK)$ is precisely the set of $\Gamma$-fixed points in $Y_{k_s}(\KK)$.
Since $R_u(P_\lambda)(k)$ acts simply transitively on
the set of $k$-defined R-Levi subgroups of $P_\lambda$, two $k$-defined cocharacters
$\lambda$ and $\mu$ are
$R_u(P_\lambda)$-conjugate if and only if they are
$R_u(P_\lambda)(k_s)$-conjugate if and only if they are $R_u(P_\lambda)(k)$-conjugate.
Following this observation through the definition of $\Delta_k(\KK)$, $\Delta_{k_s}(\KK)$ and $\Delta_{\ovl{k}}(\KK)$
is enough to prove the first assertion of (i).  It is clear that $\Delta_k(\KK)$ is fixed by $\Gamma$.  Conversely, let $x\in \Delta_{k_s}(\KK)$ be fixed by $\Gamma$.  Let $P$ be the parabolic subgroup associated to $x$.  Then $P$ is $k_s$-defined and $\Gamma$-stable, so $P$ is $k$-defined.  Pick a $k$-defined maximal torus $T$ of $P$.  There exists $\lambda\in Y_{k_s}(T,\KK)$ such that $P= P_\lambda$ \cite[8.4.4, 8.4.5]{springer}.  
Each $\gamma\in \Gamma$ maps $\lambda$ to a $R_u(P)(k_s)$-conjugate of $\lambda$. 
Now $R_u(P)$ acts simply transitively on the set of Levi subgroups of $P$, and each maximal torus of $P$ is contained in a unique Levi subgroup \cite[8.4.4]{springer}, so $R_u(P)$ acts freely on the set of maximal tori of $P$.   
But $T$ is $\Gamma$-stable, so we must have that $\Gamma$ fixes $\lambda$.  
Hence $x\in \Delta_k(\KK)$, as required.

(ii). In analogy with the first assertion of (i) (although it is slightly more subtle), the key observation is that
if $\lambda,\mu \in Y_{k}(H)$ are $R_u(P_\lambda(G))(k)$-conjugate,
then they are in fact $R_u(P_\lambda(H))(k)$-conjugate (see \cite[Lem.\ 3.3(i)]{BMRT:relative}).
Observe also that the restriction of a
$\Gamma$- and $G$-invariant norm on $Y$ to $Y(H)$
gives a $\Gamma$- and $H$-invariant norm on $Y(H)$.
\end{proof}

Henceforth, we write $\Delta_k(\KK)\subseteq \Delta_{k_s}(\KK) \subseteq \Delta(\KK)$ and
$\Delta_{k}(H, \KK)\subseteq \Delta_k(\KK)$
without any further comment.
One note of caution: the inclusion $\Delta_{k}(H, \KK)\subseteq \Delta_k(\KK)$ does not in general respect the simplicial structures on these objects.

\subsection{Convex subsets}
\label{subsec:convex}

Because any two parabolic subgroups of $G$ contain a common maximal torus,
any two points $x,y\in\Delta(\KK)$ are contained in a common apartment and, as long as these points are not
opposite each other, there is a unique geodesic $[x,y]$ joining them.
This geodesic does not depend on the apartment we find containing $x$ and $y$;
in particular, this can be done inside $\Delta_k(\KK)$ if $x,y \in \Delta_k(\KK)$ 
and inside $\Delta_k(H)$
if $x,y \in \Delta_k(H)$ for some reductive subgroup $H$ of $G$.
This leads to the following key definitions:

\begin{defn}\label{defn:convex}
\begin{itemize}
\item[(i)] A subset $\Sigma \subseteq \Delta(\KK)$ is called \emph{convex} if whenever
$x,y \in \Sigma$ are not opposite then $[x,y] \subseteq \Sigma$.
It follows from the discussion above
that $\Delta_k(\KK)$ is a convex
subset of $\Delta(\KK)$.
\item[(ii)] Given a convex subset $\Sigma$ of $\Delta(\KK)$,
its preimage $C:= \zeta^{-1}(\Sigma) \cup \{0\}$ in $Y(\KK)$ is
a union of cones $C_T := C\cap Y(T, \KK)$, where $T$ runs over the maximal tori of $G$.
The subset $\Sigma$ is called \emph{polyhedral} if each $C_T$ is a polyhedral cone and $\Sigma$ is said to have
\emph{finite type} if the set of cones $\{g\cdot C_{g\inverse Tg}\mid g\in G\}$ is finite for all $T$.
\item[(iii)] A convex subset $\Sigma$ of $\Delta(\KK)$ is called a \emph{subcomplex} if it is a union of simplices (that is, if $\lambda, \mu \in Y(\KK)$ are such that $P_\lambda = P_\mu$, then $\zeta(\lambda) \in \Sigma$ if and only if $\zeta(\mu) \in \Sigma$) and if that union of simplices forms a subcomplex in the simplicial building $\Delta$. In such a circumstance, we denote the subcomplex of $\Delta$ arising in this way by $\Sigma$ also; note that $\Sigma$ is convex in the sense of part (i) above if and only if $\Sigma$---regarded as a subcomplex of the simplicial building---is convex in the sense of simplicial buildings.
\end{itemize}
\end{defn}

The definitions above have obvious analogues for the buildings 
$\Delta_k(\KK)$ and $\Delta_{k_s}(\KK)$.

There is an addition operation on the set $V(\KK)$, given as follows.  
Let $\varphi\colon Y(\KK)\ra V(\KK)$ be the canonical projection.  
Choose a maximal torus $T$ of $G$ and $\lambda,\mu\in Y(T,\KK)$ such that 
$\varphi(\lambda)= x$ and $\varphi(\mu)= y$; 
we define $x+ y\in V(\KK)$ by $x+ y= \varphi(\lambda+ \mu)$.  
It can be shown that this does not depend on the choice of $T$; 
moreover, for any $g\in G$, $g\cdot (x+ y)= g\cdot x+ g\cdot y$.

\subsection{The destabilizing locus and complete reducibility}
\label{subsec:destabcr}

For this paper, a particularly important class of convex subsets arises from $G$-actions on affine
varieties.
Given an affine $G$-variety $V$ and a point $v \in V$, set
$$
\Sigma_v:=\{\zeta(\lambda)\mid \lambda\in Y \textrm{ and } \lim_{a\to 0}\lambda(a)\cdot v \textrm{ exists}\} \subseteq \Delta(\QQ).
$$
We call this subset the \emph{destabilizing locus} for $v$; it is a convex subset of $\Delta(\QQ)$ by \cite[Lem.\ 5.5]{stcc} (note that $\Sigma_v$ coincides with $E_{V,\{v\}}(\QQ)$ in the language of \cite{stcc}).
Similarly we write $\Sigma_{v,k}$ (resp.\ $\Sigma_{v,k_s}$) for the image in $\Delta_k(\QQ)$ 
(resp.~$\Delta_{k_s}(\QQ)$) of the $k$-defined (resp.~$k_s$-defined) characters destabilizing $v$.  
If $H$ is a reductive subgroup of $G$, then we write $\Sigma_{v,k}(H)$ for the destabilizing locus for $v$ with respect to $H$.

\begin{defn}
\label{def:cr}
A subset $\Sigma$ of $\Delta(\KK)$ is called 
\emph{completely reducible} if every point 
of $\Sigma$ has an opposite in $\Sigma$.
\end{defn}

\begin{lem}
\label{lem:cr}
Let $v\in V$.  Then:
\begin{itemize}
\item[(i)] Given $\lambda\in Y_k$ such that $\zeta(\lambda) \in \Sigma_{v,k}$, $\lambda$ has an opposite in
$\Sigma_{v,k}$ if and only if there exists $u\in R_u(P_\lambda)(k)$ such that $u\cdot\lambda$ fixes $v$,
if and only if there exists $u\in R_u(P_\lambda)(k)$ such that $\lim_{a\to 0}\lambda(a)\cdot v = u\inverse\cdot v$.
\item[(ii)] The subset $\Sigma_{v,k}$ is completely reducible if and only if $G(k)\cdot v$ is
cocharacter-closed over $k$.
\item[(iii)] The subset $\Sigma_v$ is completely reducible if and only if the orbit $G\cdot v$ is closed in $V$.
\end{itemize}
\end{lem}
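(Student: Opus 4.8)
The plan is to prove part (i) in full --- it carries all the content --- and then obtain (ii) and (iii) almost for free. Throughout I write (A), (B), (C) for the three conditions in (i), in the stated order, so (A) is ``$\lambda$ has an opposite in $\Sigma_{v,k}$'', (B) is ``$u\cdot\lambda$ fixes $v$ for some $u\in R_u(P_\lambda)(k)$'', and (C) is ``$\lim_{a\to 0}\lambda(a)\cdot v = u^{-1}\cdot v$ for some $u\in R_u(P_\lambda)(k)$''. Under the standing hypothesis $\zeta(\lambda)\in\Sigma_{v,k}$, I would establish the cycle (B) $\Rightarrow$ (C) $\Rightarrow$ (A) $\Rightarrow$ (B).

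The two implications (B) $\Rightarrow$ (C) and (C) $\Rightarrow$ (A) are short manipulations with $R_u(P_\lambda)$ and limits. For (B) $\Rightarrow$ (C): if $u\cdot\lambda$ fixes $v$ then $\lambda$ fixes $w:=u^{-1}\cdot v$, so $\lambda(a)\cdot v = (\lambda(a)u\lambda(a)^{-1})\cdot(\lambda(a)\cdot w) = (\lambda(a)u\lambda(a)^{-1})\cdot w$, and letting $a\to 0$, with $\lambda(a)u\lambda(a)^{-1}\to 1$ since $u\in R_u(P_\lambda)$, gives $\lim_{a\to 0}\lambda(a)\cdot v = w = u^{-1}\cdot v$. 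For (C) $\Rightarrow$ (A): again set $w:=u^{-1}\cdot v$; since $u^{-1}\in R_u(P_\lambda)$ one computes $\lim_{a\to 0}\lambda(a)\cdot w = (\lim_{a\to 0}\lambda(a)u^{-1}\lambda(a)^{-1})\cdot(\lim_{a\to 0}\lambda(a)\cdot v) = w$, and the standard fact that $\lim_{a\to 0}\lambda(a)\cdot w = w$ forces $\lambda$ to fix $w$ (compare the morphisms $a\mapsto \lambda(b)\cdot\widehat\phi_{w,\lambda}(a)$ and $a\mapsto\widehat\phi_{w,\lambda}(ba)$ and evaluate at $a=0$). Hence $u\cdot\lambda$, and so $u\cdot(-\lambda)$, fixes $v$; then $\zeta(u\cdot(-\lambda))$ lies in $\Sigma_{v,k}$ and is antipodal to $\zeta(u\cdot\lambda)=\zeta(\lambda)$ inside the apartment of any maximal $k$-split torus containing $\IM(u\cdot\lambda)$, hence is an opposite of $\zeta(\lambda)$ in $\Sigma_{v,k}$, which is (A).

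The hard part is (A) $\Rightarrow$ (B). Given an opposite $\zeta(\mu)\in\Sigma_{v,k}$ of $\zeta(\lambda)$, I would pick a maximal $k$-split torus $T$ whose apartment $\Delta_k(T,\KK)$ contains both points (possible by Section~\ref{subsec:convex}), and then use antipodality on the sphere $\Delta_k(\RR)$ together with the fact that the equivalence relation defining the vector building $V_k(\KK)$ is implemented by $k$-rational elements of $R_u(P_\lambda)$ to conclude that $\zeta(\mu) = \zeta(u\cdot(-\lambda))$ for some $u\in R_u(P_\lambda)(k)$. Put $w:=u^{-1}\cdot v$. The conjugation trick above, applied with $u^{-1}\in R_u(P_\lambda)(k)$, turns the hypothesis that $\lim_{a\to 0}\lambda(a)\cdot v$ exists into the statement that $\lim_{a\to 0}\lambda(a)\cdot w$ exists; a parallel argument, this time using a suitable element of $R_u(P_{-\lambda})(k)$, turns $\zeta(\mu)\in\Sigma_{v,k}$ into the statement that $\lim_{a\to 0}(-\lambda)(a)\cdot w$ exists. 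Hence $a\mapsto\lambda(a)\cdot w$ extends to a morphism $\mathbb{P}^1\to V$, which is constant since $\mathbb{P}^1$ is complete and $V$ is affine, so $\lambda$ fixes $w$, i.e.\ $u\cdot\lambda$ fixes $v$. This is where the real obstacles lie: the common-apartment/antipode description of opposite points of $\Delta_k(\KK)$ (from Section~\ref{subsec:convex} and \cite{stcc}), ensuring the conjugating element $u$ is $k$-rational, and some bookkeeping with $\QQ$- versus $\RR$-representatives and positive rescalings.

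Parts (ii) and (iii) are then formal. For (ii): if $G(k)\cdot v$ is cocharacter-closed over $k$, then for any $\lambda\in Y_k$ with $\zeta(\lambda)\in\Sigma_{v,k}$, Theorem~\ref{thm:mainconjcocharclosed} puts $\lim_{a\to 0}\lambda(a)\cdot v$ in $R_u(P_\lambda)(k)\cdot v$, i.e.\ condition (C) holds, so $\zeta(\lambda)$ has an opposite in $\Sigma_{v,k}$ and $\Sigma_{v,k}$ is completely reducible; conversely, if $\Sigma_{v,k}$ is completely reducible then by (i) every $\lambda\in Y_k$ destabilizing $v$ has $\lim_{a\to 0}\lambda(a)\cdot v\in R_u(P_\lambda)(k)\cdot v\subseteq G(k)\cdot v$, and, since the same applies with $v$ replaced by any $g\cdot v$ with $g\in G(k)$, the orbit $G(k)\cdot v$ is cocharacter-closed over $k$. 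Finally, (iii) is the case $k=\ovl{k}$ of (ii): over $\ovl{k}$ we have $\Sigma_{v,\ovl{k}}=\Sigma_v$ and $G(\ovl{k})\cdot v = G\cdot v$, and $G\cdot v$ is cocharacter-closed over $\ovl{k}$ if and only if it is Zariski-closed by Remark~\ref{rem:HMT}.
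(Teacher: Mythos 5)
Your proof is correct and follows essentially the same route as the paper's: characterize opposites of $\zeta(\lambda)$ in $\Sigma_{v,k}$ via $R_u(P_\lambda)(k)$-conjugates of $-\lambda$ in a common apartment, then use the fact that a cocharacter that destabilizes $v$ together with its negative must fix $v$. The only difference is that where the paper simply cites \cite[Lem.~2.12]{GIT} for the equivalences linking (B), (C), and ``both $u\cdot\lambda$ and $-(u\cdot\lambda)$ destabilize $v$'', you re-derive that lemma from scratch with the conjugation trick and the $\mathbb{P}^1$-extension argument; the deductions of (ii) from Theorem~\ref{thm:mainconjcocharclosed} and (iii) from Remark~\ref{rem:HMT} are identical.
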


\begin{proof}
We have that $\Sigma_v$ (resp.\ $\Sigma_{v,k}$) is completely reducible if and only if
for every $\lambda\in Y$ (resp.\ $\lambda\in Y_k$) such that $ \lim_{a\to 0}\lambda(a)\cdot v$ exists,
there is some $u\in R_u(P_\lambda)$ (resp.\ $u\in R_u(P_\lambda)(k)$)
such that both $u\cdot\lambda$ and
$-(u\cdot\lambda)$ destabilize $v$.
But this is true if and only if $u\cdot\lambda$ fixes $v$, which is equivalent to the fact that
$\lim_{a\to 0}\lambda(a)\cdot v = u\inverse\cdot v$, by \cite[Lem.\ 2.12]{GIT}.
This gives part (i).
Part (ii) now follows from Theorem \ref{thm:mainconjcocharclosed},
and part (iii) from Remark~\ref{rem:HMT}.
\end{proof}

\subsection{The strong Centre Conjecture and quasi-states}
\label{sec:stcc}

The aim of the paper \cite{stcc} is to study a strengthened version of Tits' Centre Conjecture for $\Delta_{k_s}(\KK)$.  Let ${\mathcal G}$ denote the group of transformations of $\Delta_{k_s}(\KK)$ generated by the isometries arising from the action of $G(k_s)$ and the action of $\Gamma$.  Note that elements of ${\mathcal G}$ map $k_s$-defined parabolic subgroups of $G$ to $k_s$-defined parabolic subgroups of $G$, so they give rise to automorphisms of the simplicial building $\Delta_{k_s}$.  Given a convex subset $\Sigma$ of $\Delta_{k_s}(\KK)$, we call a point $x\in\Sigma$ a \emph{${\mathcal G}$-centre} if it is fixed by all the elements of ${\mathcal G}$ that stabilize $\Sigma$ setwise.
We can now formulate the original Centre Conjecture in our setting.  

\begin{thm}
\label{thm:tccforsubcomplexes}
Suppose $\Sigma\subseteq\Delta_{k_s}(\KK)$ is a convex non-completely reducible subcomplex.  Then
$\Sigma$ has a ${\mathcal G}$-centre.
\end{thm}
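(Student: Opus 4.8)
The plan is to derive Theorem~\ref{thm:tccforsubcomplexes} from the classical Centre Conjecture (Theorem~\ref{thm:centreconj}) applied to an appropriate \emph{thick} spherical building. The subtlety is that $\Delta_{k_s}(\KK)$ need not itself be thick (if $G^0$ has a positive-dimensional centre, or is not semisimple), and the group $\mathcal{G}$ contains not only inner isometries from $G(k_s)$ but also the Galois action of $\Gamma$, so we must present $\mathcal{G}$ as a group of genuine building automorphisms of a thick building in order to quote Theorem~\ref{thm:centreconj}.

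First I would reduce from $\Delta_{k_s}(\KK)$ to the building $\Theta$ of the semisimple group $G^0/Z(G^0)$ (equivalently, of $[G^0,G^0]$) over $k_s$, which \emph{is} thick. The decomposition $V_{k_s}(\RR)$ splits off the central torus factor, and projecting kills precisely the contribution of $Z(G^0)$; since $\Sigma$ is a subcomplex, it is a union of simplices, each simplex corresponding to a $k_s$-defined parabolic subgroup of $G^0$, and these simplices match up with simplices of the simplicial building $\Delta_{k_s}$ of $G^0$, which is literally the simplicial building of $[G^0,G^0]$. So I pass to $\Sigma$ regarded as a convex subcomplex of the thick simplicial building $\Theta$. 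The hypotheses transfer: $\Sigma$ is convex in the simplicial sense by Definition~\ref{defn:convex}(iii), and $\Sigma$ is non-completely-reducible in $\Theta$ because a point of $\Delta_{k_s}(\KK)$ has an opposite iff the corresponding simplex has an opposite simplex in $\Theta$ (our notion of ``opposite'' is compatible with the simplicial one, as noted after Definition~\ref{defn:buildings}).

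Next I would check that every element of $\mathcal{G}$ induces a bona fide building automorphism of $\Theta$. For $g\in G(k_s)$ this is clear: conjugation permutes $k_s$-defined parabolics of $G^0$ preserving the incidence (reverse-inclusion) order. For $\gamma\in\Gamma$ this is the point flagged in the paragraph before Theorem~\ref{thm:tccforsubcomplexes} — the Galois action sends $k_s$-defined parabolics to $k_s$-defined parabolics and respects inclusions, hence is a simplicial automorphism of $\Delta_{k_s}$, and it descends to $\Theta$ since it fixes $Z(G^0)$ setwise. So we get a homomorphism from $\mathcal{G}$ to $\Aut(\Theta)$ (as an abstract building; one should note the isometry/automorphism dictionary from \cite{stcc} that makes ``stabilizes $\Sigma$ as an isometry'' equivalent to ``stabilizes $\Sigma$ as a simplicial automorphism'').

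Then Theorem~\ref{thm:centreconj} applies directly: $\Sigma$, being a convex, non-completely-reducible subcomplex of the thick building $\Theta$, has a centre — a simplex $s$ of $\Sigma$ fixed by every building automorphism of $\Theta$ stabilizing $\Sigma$. In particular $s$ is fixed by the image of every element of $\mathcal{G}$ that stabilizes $\Sigma$. Finally I would translate $s$ back to a point $x\in\Sigma\subseteq\Delta_{k_s}(\KK)$: the simplex $s$ corresponds to a $k_s$-defined parabolic $P$ of $G^0$, and any $\lambda\in Y_{k_s}(\KK)$ with $P_\lambda = P$ gives a point $x=\zeta(\lambda)$ in $\Sigma$; since $\mathcal{G}$ acts on $\Delta_{k_s}(\KK)$ compatibly with its action on simplices of $\Theta$ (up to the central factor, which every element of $\mathcal{G}$ respects), $x$ is a $\mathcal{G}$-centre of $\Sigma$ in the sense of the definition preceding the theorem.

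The main obstacle I expect is bookkeeping around the passage between the geometric realisation $\Delta_{k_s}(\KK)$ and the simplicial building $\Theta$, together with the central torus factor: one must make sure that ``convex non-completely-reducible subcomplex'' in the geometric-realisation sense of Definition~\ref{defn:convex}(iii) really does correspond to the simplicial-building notion in Theorem~\ref{thm:centreconj}, that a $\mathcal{G}$-fixed simplex of $\Theta$ yields a genuinely $\mathcal{G}$-fixed \emph{point} of $\Delta_{k_s}(\KK)$ (not merely a fixed simplex, which a priori an automorphism could permute internally), and that the extra generators of $\mathcal{G}$ coming from $\Gamma$ genuinely land in the automorphism group to which Theorem~\ref{thm:centreconj} refers. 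None of this is deep, but it is exactly the ``care needed when $Z(G^0)$ is positive-dimensional'' alluded to in the introduction, and it is where the proof must be written carefully rather than waved through.
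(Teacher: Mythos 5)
Your approach is essentially the paper's: reduce to the thick simplicial building of the semisimple quotient, verify that $\mathcal G$ acts by simplicial automorphisms, invoke Theorem~\ref{thm:centreconj} to get a stable simplex $s$, and translate back. But the final translation step is where a genuine gap sits, and you flag it without closing it. You write that ``any $\lambda\in Y_{k_s}(\KK)$ with $P_\lambda=P$ gives a point $x=\zeta(\lambda)$'' which is a $\mathcal G$-centre. That is false for a generic such $\lambda$: the set $\{\zeta(\lambda): P_\lambda = P\}$ is a whole cell in $\Delta_{k_s}(\KK)$ (a face of positive dimension unless $P$ is a minimal parabolic, and it can be even larger when $Z(G^0)$ contributes), and an element of $\mathcal G$ that merely stabilizes the simplex $s$ can move points around inside that cell, e.g.\ by permuting vertices. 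So ``any'' cannot be right; you must single out a canonical point.

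The missing idea is a one-line argument that the paper supplies: take the \emph{barycentre} of the fixed simplex. Since $\mathcal G$ acts on $\Delta_{k_s}(\KK)$ by \emph{isometries}, any element stabilizing a simplex must fix its barycentre, which is the unique metric centre of that cell. This is exactly what turns a $\mathcal G$-stable simplex of the simplicial building into a $\mathcal G$-fixed \emph{point} of the geometric realization, and it handles the positive-dimensional central-torus contribution uniformly. Your detailed reduction to the thick building $\Theta$ and the verification that $\Gamma$ lands in $\Aut(\Theta)$ are fine and somewhat more careful than the paper's parenthetical remark that ``the simplicial structure does not see the difference between $G$ and $G^0$, or between $G^0$ and its semisimple part''; but without the barycentre step the proof does not conclude.
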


\begin{proof}
 Theorem~\ref{thm:centreconj} asserts the existence of a stable simplex in the subcomplex
(note that the simplicial structure on $\Delta(\KK)$ does not ``see'' the difference between $G$ and $G^0$,
or between $G^0$ and its semisimple part, so the proof of the centre conjecture for subcomplexes of spherical
buildings still works for the more general class of objects we have described).
Now any element of ${\mathcal G}$ that fixes a simplex also fixes its barycentre (because the action is via isometries), and we are done.
\end{proof}

In the strong Centre Conjecture \cite[Conj.\ 2.10]{stcc}, one replaces convex non-completely reducible subcomplexes with convex non-completely reducible subsets.  Most of \cite{stcc} deals with the special case when $k= \ovl{k}$ and considers only the isometries of $\Delta_k(\KK)$ coming from the action of $G$.  We need to take the action of $\Gamma$ into account, so we briefly indicate some of the
key changes that must be made to the constructions in \cite{stcc} in order to make the results go through; see also the comments in \cite[Sec. 6.3]{stcc}.

\begin{defn}\label{defn:quasistate}
We recall the notion of a \emph{$\KK$-quasi-state} $\Xi$ from \cite[Def.\ 3.1]{stcc}: this is an assignment of a finite
set of characters $\Xi(T)$ to each maximal torus $T$ of $G$ satisfying certain conditions (see \emph{loc.~cit.}
for a precise statement).

The groups $G$ and $\Gamma$ act on quasi-states: given a $\KK$-quasi-state $\Xi$ and $g \in G$ and $\gamma \in
\Gamma$
we define new quasi-states $g_*\Xi$ and $\gamma_*\Xi$ by
$$
g_*\Xi(T):= g_!\Xi(g\inverse Tg), \qquad \gamma_*\Xi(T):=\gamma_!(\gamma\inverse\cdot T),
$$
where for a character $\chi$ of a torus $T$, $g_!\chi$ is a character of the torus $g Tg\inverse$ given by $(g_!\chi)
(gtg\inverse): = \chi(t)$ for all $t \in T$,
and similarly $\gamma_!\chi$ is a character of $\gamma\cdot T$ given by $(\gamma_!\chi)(\gamma\cdot t) := \chi(t)
$ for all $t\in T$.

We say a quasi-state is \emph{defined over a field $k'$} if it assigns $k'$-defined characters to $k'$-defined
maximal tori.
Recall also the notions of \emph{boundedness}, \emph{admissibility} 
and \emph{quasi-admissibility} for $\KK$-quasi-states, \cite[Def.\ 3.1, Def.\ 3.2]{stcc}.
\end{defn}

With these definitions in hand, we can extend \cite[Lem. 3.8]{stcc} as follows:

\begin{lem}\label{lem:galav}
Let $\Upsilon$ be a $\KK$-quasi-state which is defined over $k_s$ and define $\Xi:=\bigcup_{\gamma \in \Gamma} \gamma_*\Upsilon$ by
$\Xi(T):=\bigcup_{\gamma \in \Gamma} (\gamma_*\Upsilon)(T)$ for each maximal torus $T$ of $G$.
Then $\Xi$ is a $\KK$-quasi-state which is defined over $k_s$, and it is bounded (resp.\ quasi-admissible, admissible at $\lambda$) if $\Upsilon$ is.
Moreover, by construction, $\Xi$ is $\Gamma$-stable.
\end{lem}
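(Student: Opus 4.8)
The plan is to follow the proof of \cite[Lem.\ 3.8]{stcc} and simply check that the extra averaging over the (possibly infinite) Galois group $\Gamma$ causes no trouble. First I would address the finiteness issue, which is the only thing that is not immediate: although $\Gamma$ may be infinite, $\Upsilon$ is defined over $k_s$, so for a $k_s$-defined maximal torus $T$ the set $\Upsilon(T)$ consists of finitely many $k_s$-defined characters, each of which is defined over some finite subextension of $k_s/k$; hence the $\Gamma$-orbit of $\Upsilon(T)$ is finite, and $\Xi(T)=\bigcup_{\gamma\in\Gamma}(\gamma_*\Upsilon)(T)$ is a finite set of $k_s$-defined characters. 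For an arbitrary maximal torus $T$ of $G$ (not necessarily $k_s$-defined) one uses the quasi-state axiom relating $\Xi(T)$ to $\Xi(T')$ for a $k_s$-defined $T'$ via conjugation, exactly as in \emph{loc.\ cit.}, so finiteness propagates. This shows $\Xi$ is a well-defined $\KK$-quasi-state defined over $k_s$, and $\Gamma$-stability is clear from the construction since $\delta_*\big(\bigcup_\gamma \gamma_*\Upsilon\big)=\bigcup_\gamma (\delta\gamma)_*\Upsilon=\bigcup_\gamma \gamma_*\Upsilon$ for $\delta\in\Gamma$.

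Next I would verify the quasi-state axioms themselves for $\Xi$. Each $\gamma_*\Upsilon$ is a $\KK$-quasi-state because $\gamma$ acts on cocharacters and characters compatibly with the pairing and with the conjugation action of $G$ (this is the analogue, for the $\Gamma$-action, of the statement in \cite{stcc} that $g_*\Upsilon$ is a quasi-state), so the axioms — which are stated torus-by-torus in terms of restrictions to subtori and behaviour under the Weyl group — are preserved. A finite union of quasi-states is a quasi-state (this is precisely the content of the original \cite[Lem.\ 3.8]{stcc}), so $\Xi$ is a quasi-state; here the union is over the finite set of distinct $\gamma_*\Upsilon$, which suffices.

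For the preservation of boundedness, quasi-admissibility, and admissibility at $\lambda$, the point is that each of these properties is inherited under the $\Gamma$-action and under finite unions. Boundedness says the associated function on $Y(\KK)$ is bounded on the unit sphere; since $\gamma$ acts by isometries on $Y(\KK)$ and the norm is $\Gamma$-invariant, $\gamma_*\Upsilon$ is bounded whenever $\Upsilon$ is, with the same bound, and a finite union is bounded by the maximum. Quasi-admissibility and admissibility at a fixed $\lambda$ are, by \cite[Def.\ 3.2]{stcc}, conditions of the same shape — non-negativity, respectively strict positivity, of a certain pairing-type expression — and these survive both the isometric $\Gamma$-action and finite unions for the same reason. (For admissibility at $\lambda$ one should note that $\lambda$ itself need not be $\Gamma$-fixed for the statement, and indeed the hypothesis and conclusion both refer to the \emph{same} $\lambda$, so no extra care is needed there beyond what is in the finite-union argument of \cite{stcc}.)

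The main obstacle is therefore purely bookkeeping: confirming that ``finite union'' really is finite, i.e.\ the observation above that $k_s$-definedness forces each $\Upsilon(T)$ to be fixed by an open (finite-index) subgroup of $\Gamma$. Once that is in place, everything else is a routine transcription of the arguments for \cite[Lem.\ 3.8]{stcc}, replacing the finite set of group elements $g$ there by the finite set of distinct Galois twists $\gamma_*\Upsilon$, and I would not spell out those verifications in detail.
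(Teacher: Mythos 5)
The main gap is in your treatment of boundedness, and it comes in two parts. First, ``bounded'' for a $\KK$-quasi-state $\Xi$ in the sense of \cite[Def.\ 3.1]{stcc} is not the statement that some associated function is bounded on the unit sphere; it is the requirement that $\bigcup_{g\in G} g_*\Xi(T)$ be a \emph{finite} set of characters for some (hence any) maximal torus $T$. That is the property the paper actually verifies, via the computation
\[
g_*(\gamma_*\Upsilon)(T)=\gamma_!\bigl((\gamma^{-1}\cdot g)_*\Upsilon\bigr)(T)
\]
applied to a $k$-defined $T$, which re-organises the double union $\bigcup_\gamma\bigcup_g$ so that the inner union is finite by boundedness of $\Upsilon$ and the outer one is finite because $k_s$-defined characters have finite $\Gamma$-orbits. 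Your isometry/norm argument does not touch this.

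Second, the step ``here the union is over the finite set of distinct $\gamma_*\Upsilon$, which suffices'' is unjustified. Being defined over $k_s$ does \emph{not} mean defined over some fixed finite extension of $k$: a quasi-state assigns a (possibly different) finite set of characters to each of the infinitely many maximal tori, and nothing forces the assignment on non-$k_s$-defined tori to be Galois-periodic. So the quasi-states $\gamma_*\Upsilon$, $\gamma\in\Gamma$, may well be pairwise distinct, and $\Xi$ need not be a finite union of quasi-states. What is true, and what you correctly observe for the ``$\Xi$ is a quasi-state'' step, is that for a \emph{fixed} $k_s$-defined $T$ the collection $(\gamma_*\Upsilon)(T)$ takes only finitely many values; but this torus-by-torus finiteness is exactly what has to be propagated through the $\bigcup_{g\in G}$ in the boundedness axiom, and that requires the commutation identity above — it does not follow from ``finitely many quasi-states.'' The rest of your proposal (finiteness of $\Xi(T)$ for $k_s$-defined $T$, $\Gamma$-stability by construction, deferring quasi-admissibility and admissibility to \cite[Lem.\ 3.8]{stcc}) matches the paper in spirit, so the boundedness computation is the piece you are missing.
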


\begin{proof}
There are two points which need to be made in order for the arguments already in the proof of
\cite[Lem. 3.8]{stcc} to go through.
First note that given a $k_s$-defined
maximal torus $T$ of $G$ the set of Galois conjugates of $T$ is finite (because $T$ is defined over some finite extension of $k$).
This means that, because $\Upsilon$ is $k_s$-defined, $\Xi(T)$ is still finite, so $\Xi$ is a $\KK$-quasi-state.
Now, for boundedness we need to check that if $\Upsilon$ is bounded 
then the set $\bigcup_{\gamma \in \Gamma}\left(\bigcup_{g\in G}g_*(\gamma_*\Upsilon)(T)\right)$ is finite for some (and hence all) $k_s$-defined maximal tori $T$ of $G$.
Since we can choose any $k_s$-defined maximal torus $T$, we choose one that is actually $k$-defined, and then
\begin{align*}
g_*(\gamma_*\Upsilon)(T) &= g_!(\gamma_*\Upsilon)(g\inverse Tg) \\
                                              &= g_!(\gamma_!\Upsilon(\gamma\inverse\cdot(g\inverse Tg))) \\
                                              &= g_!(\gamma_!\Upsilon((\gamma\inverse\cdot g)\inverse T(\gamma\inverse\cdot g)) \\
                                              &= \gamma_!(\gamma^\inverse\cdot g)_!\Upsilon((\gamma\inverse\cdot g)\inverse T(\gamma\inverse\cdot g)) \\
                                              &= \gamma_!((\gamma^\inverse\cdot g)_*\Upsilon)(T).
\end{align*}
Therefore, we can write
$$
\bigcup_{\gamma \in \Gamma}\left(\bigcup_{g\in G}(g_*(\gamma_*\Upsilon))(T)\right)
=
\bigcup_{\gamma \in \Gamma} \gamma_!\left(\bigcup_{g \in G}(\gamma^\inverse\cdot g)_*\Upsilon(T)\right).
$$
Now, since $\Upsilon$ is bounded, the second union on the RHS is finite for every $\gamma$, 
and because $\Upsilon$ is $k_s$-defined and the set of Galois conjugates of a $k_s$-defined character is finite, the whole RHS is finite.
This proves the boundedness assertion.  
The other assertions follow as in \emph{loc.~cit.}
\end{proof}

Using Lemma \ref{lem:galav} we can ensure that, when appropriate, the states and quasi-states constructed during the course of the paper \cite{stcc} are Galois-stable.
In particular, we get the following variant of \cite[Thm. 5.5]{stcc}:

\begin{thm}
\label{thm:singleapt}
Suppose $\Sigma \subseteq \Delta(\QQ)$ is a convex polyhedral non-completely reducible subset of finite type
contained in a single apartment of $\Delta(\QQ)$.
Then $\Sigma$ has a ${\mathcal G}$-centre.
In particular, if $\Sigma$ is stabilized by all of $\Gamma$, then there exists a $\Gamma$-fixed point in $\Sigma$.
\end{thm}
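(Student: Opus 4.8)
The plan is to reduce the statement to the corresponding single-apartment result in \cite[Thm.~5.5]{stcc} by using Lemma~\ref{lem:galav} to arrange that the combinatorial data producing the centre is itself Galois-stable, so that the centre it produces is automatically a $\mathcal G$-centre. First I would recall the mechanism of \cite[Thm.~5.5]{stcc}: a convex polyhedral non-completely reducible subset $\Sigma$ of finite type lying in a single apartment is, up to the $\zeta$-identifications, the vanishing/support locus of a bounded quasi-admissible $\KK$-quasi-state $\Xi$, and the proof there constructs from $\Xi$ a point of $\Sigma$ fixed by every isometry of $\Delta(\KK)$ that both stabilizes $\Sigma$ and is compatible with $\Xi$. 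The key point is that the construction is \emph{canonical} in $\Xi$: any transformation permuting the data defining $\Xi$ fixes the resulting point.

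Next I would produce the quasi-state. Since $\Sigma$ is polyhedral, of finite type, and contained in a single apartment, the arguments of \cite{stcc} (Section~5) furnish a bounded, quasi-admissible $\KK$-quasi-state $\Upsilon$ whose associated convex subset is $\Sigma$; because $\Sigma$ arises from $k_s$-rational data in the situations we care about, we may take $\Upsilon$ to be defined over $k_s$ (if one only wants the bare statement over $\ovl k$ this is automatic). Now apply Lemma~\ref{lem:galav}: the quasi-state $\Xi:=\bigcup_{\gamma\in\Gamma}\gamma_*\Upsilon$ is again a $k_s$-defined $\KK$-quasi-state, still bounded and quasi-admissible, and it is $\Gamma$-stable by construction. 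One checks that passing from $\Upsilon$ to $\Xi$ does not change the associated convex subset when $\Sigma$ is already $\Gamma$-stable — indeed $\Gamma$ permutes the cones $C_T$ exactly as it permutes the tori, so the support locus of $\gamma_*\Upsilon$ is $\gamma\cdot\Sigma=\Sigma$, and a finite union of copies of $\Sigma$ is $\Sigma$. Here one has to be a little careful that "taking the union of the quasi-states" corresponds on the geometric side to taking the intersection (or union, depending on the convention in \cite{stcc}) of the supports; this bookkeeping is the one genuinely fiddly point.

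Finally, feed $\Xi$ into the single-apartment centre construction of \cite[Thm.~5.5]{stcc} to obtain a point $x\in\Sigma$ that is fixed by every isometry of $\Delta(\QQ)$ stabilizing $\Sigma$ and compatible with $\Xi$. Every element $g$ of $G(k_s)$ that stabilizes $\Sigma$ acts on quasi-states and sends $\Xi$ to $g_*\Xi$, which has the same support $\Sigma$; by the canonicity of the construction (and, if necessary, by re-running the averaging argument so that the data is $G(k_s)_\Sigma$-stable exactly as in \emph{loc.~cit.}), $g$ fixes $x$. Likewise every $\gamma\in\Gamma$ stabilizing $\Sigma$ sends $\Xi$ to $\gamma_*\Xi=\Xi$ — this is precisely the $\Gamma$-stability built in by Lemma~\ref{lem:galav} — so $\gamma$ fixes $x$ as well. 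Since $\mathcal G$ is generated by these two families of isometries, $x$ is a $\mathcal G$-centre. In particular, if all of $\Gamma$ stabilizes $\Sigma$, then $x$ is a $\Gamma$-fixed point of $\Sigma$, completing the proof. The main obstacle I anticipate is not any deep idea but rather verifying cleanly that the Galois-averaging of the quasi-state interacts correctly with the geometric support map and with the $G(k_s)$-averaging already present in \cite{stcc}, so that a single point can be made fixed by \emph{both} actions simultaneously; this is handled by doing both averagings (over $\Gamma$ and over the relevant finite set of $G$-translates) before invoking the canonical construction.
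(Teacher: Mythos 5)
Your proposal takes essentially the same approach as the paper's proof: the paper simply invokes Lemma~\ref{lem:galav} to make the quasi-state constructed in the proof of \cite[Thm.~5.5]{stcc} (via \cite[Lem.~5.2]{stcc}) stable under the relevant elements of ${\mathcal G}$, and then asserts that the arguments of \emph{loc.~cit.} go through; your write-up unpacks precisely this mechanism (build $\Upsilon$, average over $\Gamma$ to get $\Xi$, feed into the single-apartment centre construction). The only caveat worth registering is that the paper's actual proof is just as terse as yours on the bookkeeping you flag---matching the averaged quasi-state's support back to $\Sigma$ and ensuring fixedness under the whole stabilizer $\mathcal G_\Sigma$ rather than just the two separate stabilizers in $G(k_s)$ and $\Gamma$---so your proposal neither fills a gap the paper leaves nor introduces one it avoids.
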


\begin{proof}
Using Lemma \ref{lem:galav}, one can ensure that the quasi-state constructed in \cite[Lem. 5.2]{stcc} which is
used in the proof of \cite[Thm. 5.5]{stcc} is also stable under the relevant elements of ${\mathcal G}$.
The proofs in \emph{loc.~cit.} now go through.
\end{proof}

\begin{rem}
 In our application of Theorem~\ref{thm:singleapt} to the proof of Theorem~\ref{thm:nice_hyps} below, $\Sigma$ is a $\Gamma$-stable subset of $\Delta_{k_s}(\QQ)$ and we want to show that $\Sigma$ has a $\Gamma$-fixed point.  A striking feature of Theorem~\ref{thm:singleapt} is that we do {\bf not} require the apartment containing $\Sigma$ to be $k_s$-defined: it can be any apartment of the building $\Delta(\KK)= \Delta_{\ovl{k}}(\KK)$.
\end{rem}

We note here that, unfortunately, we do not know \emph{a priori} that any cocharacter corresponding to the fixed point given by Theorem~\ref{thm:singleapt} properly destabilizes $v$.  Moreover, we may need to consider cocharacters of $Z(G^0)$, which do not correspond to simplices of the spherical building at all.  These technical issues are at the heart of many of the complications in the proofs in Section~\ref{sec:tccproofs} below.

\section{Proofs of the main results}
\label{sec:tccproofs}

Having put in place the building-theoretic technology needed for our proofs in the previous section,
we start this section with a few more technical results to be used for the main theorems.
As always, $V$ denotes a $k$-defined affine $G$-variety, and $v\in V$.  One obstacle to proving Theorems~\ref{thm:subcx} and \ref{thm:nice_hyps} is that we need to deal with cocharacters that live in $Z(G^0)$, which are not detected by the simplicial building (cf.\ the proof of Theorem~\ref{thm:tccforsubcomplexes}).  An extra problem for Theorem~\ref{thm:nice_hyps} is that we need to factor out some simple components of $G^0$.  The following results let us deal with these difficulties.

Let $N$ be a product of certain simple factors of $G^0$, and let $S$ be a torus of $Z(G^0)$.
Let $M$ be the product of the remaining simple factors of $G^0$
together with $Z(G^0)$.  Suppose that $N$ and $S$ are normal in $G$ (this implies that $M$ is normal in $G$ as well), and that $N$ and $S$ both fix $v$.  Set $G_1= G/NS$ and let $\pi\colon G\ra G_1$ be the canonical projection.  Since $Z(G^0)$ is $k_s$-defined and $k_s$-split, $S$ is $k_s$-defined, and it is clear that $N$ is $k_s$-defined.  So $G_1$ and $\pi$ are $k_s$-defined.  We have a $k_s$-defined action of $G_1$ on the fixed point set $V^{NS}$ (note that $V^{NS}$ is $G$-stable). 

\begin{lem}
\label{lem:normal}
\begin{itemize}
\item[(i)] For any $\mu_1\in Y_{k_s}(G_1)$, there exist $n\in {\mathbb N}$ and $\mu\in Y_{k_s}(M)$ such that $\pi\circ \mu= n\mu_1$.
\item[(ii)] Let $\lambda\in Y_{k_s}$.
Then $\lambda$ destabilizes $v$ over $k_s$ in $V$ if and only if $\pi\circ\lambda$
destabilizes $v$ over $k_s$ in $V^{NS}$.
Moreover, if this is the case then
$\lim_{a\to 0}\lambda(a)\cdot v=\lim_{a\to 0}(\pi\circ \lambda)(a)\cdot v$
belongs to $R_u(P_\lambda(G))(k_s)\cdot v$ if and only if it belongs to
$R_u(P_{\pi\circ \lambda}(G_1))(k_s)\cdot v$.
\item[(iii)] $G_1(k_s) \cdot v$ is cocharacter-closed over $k_s$ if and only if $G(k_s)\cdot v$ is cocharacter-closed over $k_s$.
\end{itemize}
\end{lem}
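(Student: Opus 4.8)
The plan is to reduce everything to a careful analysis of how cocharacters, parabolic subgroups and limits behave under the projection $\pi\colon G\ra G_1 = G/NS$. Throughout I would exploit the decomposition $G^0 = M\cdot N$ (an almost-direct product, with $M\cap N$ finite central) and the fact that $N$ and $S$ fix $v$ and lie in $\ker\pi$.

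For part~(i): given $\mu_1\in Y_{k_s}(G_1)$, lift it to a cocharacter of $G$. Since $\pi$ restricts to a $k_s$-isogeny from $M$ onto $G_1$ (as $M\cdot NS = G^0$ maps onto $G_1^0 = G_1$ and $M\cap NS$ is finite), after possibly raising to a power $n$ so that the lift factors through the isogeny, one obtains $\mu\in Y_{k_s}(M)$ with $\pi\circ\mu = n\mu_1$. The only subtlety is the $k_s$-rationality of the lift: here I would use that $M$, $N$, $S$ and $\pi$ are all $k_s$-defined, that $Z(G^0)$ is $k_s$-split, and the standard fact that a $k_s$-defined isogeny of $k_s$-split tori induces a finite-index inclusion on $k_s$-defined cocharacter lattices, so some multiple of $\mu_1$ lifts to $Y_{k_s}(M)$.

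For part~(ii): first, $\lim_{a\to 0}\lambda(a)\cdot v$ in $V$ and $\lim_{a\to 0}(\pi\circ\lambda)(a)\cdot v$ in $V^{NS}$ are literally the same limit, because $NS$ fixes $v$, so $\lambda(a)\cdot v$ depends only on $\pi(\lambda(a))$; this also shows that if one limit exists the other does, and they agree. Hence $\lambda$ destabilizes $v$ over $k_s$ in $V$ iff $\pi\circ\lambda$ destabilizes $v$ over $k_s$ in $V^{NS}$. For the second assertion I would relate $R_u(P_\lambda(G))$ to $R_u(P_{\pi\circ\lambda}(G_1))$: since $NS\subseteq Z(G^0)\cdot N$ and $P_\lambda(G)\supseteq NS$ with $R_u(P_\lambda(G))\cap NS = 1$ (as $N$ decomposes into its own $P_\lambda(N)$ and $S$ is central), $\pi$ maps $R_u(P_\lambda(G))$ isomorphically onto $R_u(P_{\pi\circ\lambda}(G_1))$, and this isomorphism is $k_s$-defined, hence induces a bijection on $k_s$-points. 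Because the limit point $v'$ and $v$ both lie in $V^{NS}$, and $NS$ acts trivially there, $v'\in R_u(P_\lambda(G))(k_s)\cdot v$ iff $v'\in \pi(R_u(P_\lambda(G)))(k_s)\cdot v = R_u(P_{\pi\circ\lambda}(G_1))(k_s)\cdot v$. The main obstacle is keeping the $k_s$-rationality bookkeeping honest, in particular checking that $\pi$ really does restrict to a $k_s$-isomorphism on unipotent radicals rather than merely an isogeny; this uses that $R_u$ of a parabolic is connected unipotent and meets the (diagonalizable, central-up-to-$N$) kernel trivially.

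For part~(iii): this follows by combining (i) and (ii) with the characterization in Lemma~\ref{lem:cr}(ii) that cocharacter-closedness of $G(k_s)\cdot v$ over $k_s$ is equivalent to complete reducibility of $\Sigma_{v,k_s}$. Suppose $G(k_s)\cdot v$ is cocharacter-closed over $k_s$ and let $\mu_1\in Y_{k_s}(G_1)$ destabilize $v$ in $V^{NS}$; by (i), $n\mu_1 = \pi\circ\mu$ for some $\mu\in Y_{k_s}(M)\subseteq Y_{k_s}(G)$, which by (ii) destabilizes $v$ in $V$, so by Theorem~\ref{thm:mainconjcocharclosed} the limit lies in $R_u(P_\mu(G))(k_s)\cdot v$, and then (ii) puts it in $R_u(P_{\pi\circ\mu}(G_1))(k_s)\cdot v = R_u(P_{\mu_1}(G_1))(k_s)\cdot v$; hence $G_1(k_s)\cdot v$ is cocharacter-closed over $k_s$. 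Conversely, if $G_1(k_s)\cdot v$ is cocharacter-closed over $k_s$, take $\lambda\in Y_{k_s}(G)$ destabilizing $v$ in $V$; since $N$ and $S$ fix $v$, only the image $\pi\circ\lambda$ matters, and $\pi\circ\lambda\in Y_{k_s}(G_1)$ destabilizes $v$ in $V^{NS}$ by (ii), so its limit lies in $R_u(P_{\pi\circ\lambda}(G_1))(k_s)\cdot v$, and (ii) pulls this back to $R_u(P_\lambda(G))(k_s)\cdot v$. Applying Theorem~\ref{thm:mainconjcocharclosed}/Lemma~\ref{lem:cr} in both directions — or more directly checking the definition of cocharacter-closedness — gives the equivalence.

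Overall, I expect the genuine work to be concentrated in part~(ii): everything hinges on the clean statement that $\pi$ identifies $R_u(P_\lambda(G))$ with $R_u(P_{\pi\circ\lambda}(G_1))$ over $k_s$, and that the shared limit point lives in the common fixed locus $V^{NS}$ so that orbit membership can be transported across $\pi$ without loss.
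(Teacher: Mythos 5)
Your high-level strategy matches the paper's, and part~(iii) is fine once (i) and (ii) are in place. However, there are two substantive problems, one of which is a genuine gap.

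First, a minor error in part~(i): you assert that $\pi$ restricts to a $k_s$-\emph{isogeny} from $M$ to $G_1$ because ``$M\cap NS$ is finite.'' This is false whenever $S\neq 1$: since $S\subseteq Z(G^0)\subseteq M$, the kernel of $\pi|_M$ is $S\cdot(M\cap N)$, which is positive-dimensional. Your conclusion still holds, because a \emph{surjection} of tori $T'\twoheadrightarrow T_1$ induces a finite-index inclusion of cocharacter lattices (the paper dualizes the injective map on characters), but the reasoning as stated is wrong. The paper's proof is more careful: it constructs a $k_s$-defined maximal torus $T$ in $\pi^{-1}(T_1)$ and takes $T'=T\cap M$, using separability of $\pi$ to ensure $\pi^{-1}(T_1)$ is $k_s$-defined.

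Second, a genuine gap in part~(ii): you claim $R_u(P_\lambda(G))\cap NS = 1$ and hence that $\pi$ maps $R_u(P_\lambda(G))$ isomorphically onto $R_u(P_{\pi\circ\lambda}(G_1))$. This is false in general: $R_u(P_\lambda(G))\cap N = R_u(P_\lambda(N))$, which is nontrivial whenever $\lambda$ does not centralize $N$ (e.g., $G=\SL_2\times\SL_2$, $N$ the first factor, $\lambda$ nontrivial in both factors). Lemma~\ref{lem:normal}(ii) is stated for arbitrary $\lambda\in Y_{k_s}$, so you cannot assume $\lambda\in Y_{k_s}(M)$. Since the map is not an isomorphism, your argument for transporting $k_s$-points across $\pi$ collapses. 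The paper instead shows that $\pi(R_u(P_\lambda(G)))=R_u(P_{\pi\circ\lambda}(G_1))$ by \cite[Cor.~2.1.9]{CGP}, and then — crucially — that $\pi$ restricted to $R_u(P_\lambda(G))$ is \emph{separable} (via \cite[Prop.~2.1.8(3), Rem.~2.1.11]{CGP}), so that it is surjective on $k_s$-points. Without this separability input, the implication from $v'\in R_u(P_{\pi\circ\lambda}(G_1))(k_s)\cdot v$ to $v'\in R_u(P_\lambda(G))(k_s)\cdot v$ does not follow; this is exactly the point where positive characteristic bites, and where the real work in the lemma lies.
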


\begin{proof}
(i). Let $\mu_1 \in Y_{k_s}(G_1)$. Since $\mu_1$ is $k_s$-defined,
we can choose a $k_s$-defined maximal torus $T_1 \subseteq G_1$
with $\mu_1 \in Y_{k_s}(T_1)$.
Since $\pi$ is separable and $k_s$-defined, $\pi^{-1}(T_1) \subseteq G$ is $k_s$-defined \cite[Cor.\ 11.2.14]{springer}.
Hence $\pi^{-1}(T_1)$ has a $k_s$-defined maximal torus $T$.
Let $T' = T \cap M$, a $\overline{k}$-defined torus of $M$.
Now $\pi(T') = \pi(T)$ is a maximal torus of $G_1$ by \cite[Prop.\ 11.14(1)]{borel};
but $\pi(T')$ is contained in $T_1$, so we must have $\pi(T') = T_1$.   The surjection $T' \rightarrow T_1$ induces a surjection $\QQ \otimes_\ZZ Y_{\overline{k}}(T') \rightarrow \QQ \otimes_\ZZ Y_{\overline{k}}(T_1)$ (the map before tensoring maps onto a finite-index subgroup: e.g., by transposing the injective map on character groups \cite[Thm.\ 7.3]{Water}),
hence there exist $n\in {\mathbb N}$ and $\mu\in Y_{\overline{k}}(T')$ such that $\pi\circ \mu= n\mu_1$.  As $\mu \in Y_{\overline{k}}(T) = Y_{k_s}(T)$, $\mu$ is $k_s$-defined
as required.

(ii). The first assertion is immediate, as is the assertion that the limits coincide.  Since $\pi$ is an epimomorphism, 
we have $\pi(R_u(P_\lambda(G))) = R_u(P_{\pi\circ\lambda}(G_1))$ (see \cite[Cor.\ 2.1.9]{CGP}).  
Moreover, since $\lambda$ normalizes $NS$, the restriction of $\pi$ to $R_u(P_\lambda(G))$ 
is separable (see \cite[Prop.\ 2.1.8(3) and Rem.\ 2.1.11]{CGP}) and $k_s$-defined, and hence 
is surjective on $k_s$-points (cf.\ \cite[Cor.\ 18.5]{Water}).  This implies that if the common 
limit $v'$ is in $R_u(P_{\pi\circ\lambda}(G_1))(k_s)\cdot v$, it is contained in $R_u(P_\lambda(G))(k_s) \cdot v$.  
The reverse implication is clear, since $\pi$ is $k_s$-defined.

(iii). Suppose $G(k_s)\cdot v$ is not cocharacter-closed over $k_s$.
Then there exists $\lambda\in Y_{k_s}$ such that $\lim_{a\to 0}\lambda(a)\cdot v$ exists but does not belong to $R_u(P_\lambda(G))(k_s)\cdot v$.  Then $\lim_{a\to 0}(\pi\circ \lambda)(a)\cdot v$ exists but does not belong to $R_u(P_\lambda(G_1))(k_s)\cdot v$, by part (ii).  Hence $G_1(k_s)\cdot v$ is not cocharacter-closed over $k_s$, by Theorem ~\ref{thm:mainconjcocharclosed}.

Now suppose $G_1(k_s)\cdot v$ is not cocharacter-closed over $k_s$.
Then there exists $\mu_1\in Y_{k_s}(G_1)$ such that $v': = \lim_{a\to 0} \mu_1(a)\cdot v$ exists and does not belong to $R_u(P_\mu(G_1))(k_s)\cdot v$.  Replacing $\mu_1$ with a positive multiple $n\mu_1$ of $\mu_1$ if necessary, it follows from part (i) that there exists $\mu\in Y_{k_s}$ such that $\pi\circ \mu= \mu_1$.  Then $\lim_{a\to 0} \mu(a)\cdot v$ is equal to $v'$ and $v'$ does not belong to $R_u(P_\lambda(G))(k_s)\cdot v$, by part (ii).  Hence $G(k_s)\cdot v$ is not cocharacter-closed over $k_s$, by Theorem ~\ref{thm:mainconjcocharclosed}.
\end{proof}

\begin{rem}
 We insist in Lemma~\ref{lem:normal}(i) that $\lambda$ be a cocharacter of $M$ because we need this in the proof of Theorem~\ref{thm:nice_hyps}.
\end{rem}

\begin{lem}
\label{lem:L_subcx}
 Let $G$ be connected, let $S$ be a $k_s$-torus of $G_v$ and set $L= C_G(S)$.
Suppose that for every $\lambda, \mu\in Y_{k_s}$ such that $P_\lambda= P_\mu$,
either both of $\lambda$ and $\mu$ destabilize $v$ or neither does.
Then for every $\lambda, \mu\in Y_{k_s}(L)$ such that $P_\lambda(L)= P_\mu(L)$,
either both of $\lambda$ and $\mu$ destabilize $v$ or neither does.
\end{lem}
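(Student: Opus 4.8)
The plan is to exploit the fact that $S\subseteq G_v$ is central in $L$, so that adding cocharacters of $S$ to a cocharacter of $L$ changes neither the limit of $v$ along it nor, after scaling, the $R$-parabolic of $G$ that it produces; this will let me reduce the assertion for $L$ to the hypothesis for $G$. First I would record the setup: since $G$ is connected and $S$ is a $k_s$-defined torus, $L=C_G(S)$ is a $k_s$-defined connected reductive subgroup of $G$ (a Levi subgroup), so that $Y_{k_s}(L)\subseteq Y_{k_s}$, and $S$, being a torus over the separably closed field $k_s$, is $k_s$-split. I would then fix once and for all a \emph{generic} cocharacter $\nu\in Y_{k_s}(S)$, by which I mean that for one (equivalently every) maximal torus $T$ of $L$ — every such $T$ contains $S$ and is a maximal torus of $G$ — one has $\langle\alpha,\nu\rangle\neq 0$ for every root $\alpha$ of $G$ relative to $T$ that does not vanish on $S$; since the roots vanishing on $S$ are exactly the roots of $L$, and $Y_{k_s}(S)$ spans $Y(S)\otimes\QQ$, such a $\nu$ exists. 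For this $\nu$ we get $C_G(\IM\nu)=C_G(S)=L$, so $P:=P_\nu(G)$ is a parabolic subgroup of $G$ with $R$-Levi subgroup $L$.

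The argument then rests on two observations. (a) For any $\eta\in Y_{k_s}(L)$ and any $n\geq 0$, the cocharacter $\eta+n\nu$ again lies in $Y_{k_s}(L)\subseteq Y_{k_s}$, and since $\IM\nu\subseteq S\subseteq G_v$ commutes with $\IM\eta$ we have $(\eta+n\nu)(a)\cdot v=\eta(a)\cdot v$ for all $a$; hence $\eta+n\nu$ destabilizes $v$ if and only if $\eta$ does. (b) For each $\eta\in Y_{k_s}(L)$ there is an integer $N(\eta)$ such that $P_{\eta+n\nu}(G)=P_\eta(L)\cdot R_u(P)$ for all $n\geq N(\eta)$. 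To see (b) I would pick a maximal torus $T$ of $L$ with $\IM\eta\subseteq T$ (so automatically $S\subseteq T$) and argue with roots relative to $T$: for a root $\alpha$ of $L$ one has $\langle\alpha,\eta+n\nu\rangle=\langle\alpha,\eta\rangle$ for all $n$, so these contribute exactly the root groups of $P_\eta(L)$; for a root $\alpha$ of $G$ that is not a root of $L$, genericity of $\nu$ forces the sign of $\langle\alpha,\eta+n\nu\rangle$ to agree with that of $\langle\alpha,\nu\rangle$ once $n\geq N(\eta)$, so these contribute exactly the root groups of $R_u(P_\nu(G))=R_u(P)$. Collecting these identifies $P_{\eta+n\nu}(G)$ with $P_\eta(L)\cdot R_u(P)$, a subgroup since $P_\eta(L)\subseteq L$ normalizes $R_u(P)$.

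Finally, given $\lambda,\mu\in Y_{k_s}(L)$ with $P_\lambda(L)=P_\mu(L)=:Q$, I would take $n\geq\max\{N(\lambda),N(\mu)\}$; then (b) applied to $\lambda$ and to $\mu$ gives $P_{\lambda+n\nu}(G)=Q\cdot R_u(P)=P_{\mu+n\nu}(G)$ — note that the answer $Q\cdot R_u(P)$ depends only on $Q$ and on the fixed $\nu$, not on the auxiliary tori used in the computation. Now $\lambda+n\nu$ and $\mu+n\nu$ are elements of $Y_{k_s}$ defining the same $R$-parabolic subgroup of $G$, so the hypothesis shows that $\lambda+n\nu$ destabilizes $v$ if and only if $\mu+n\nu$ does; combined with (a) this says exactly that $\lambda$ destabilizes $v$ if and only if $\mu$ does, as required. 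The main obstacle is step (b): making precise that pushing $\eta$ far in the generic central direction $\nu$ collapses $P_{\eta+n\nu}(G)$ to a parabolic depending only on $P_\eta(L)$ and $\nu$. The remaining points — existence of a generic $k_s$-defined $\nu$, and that $\eta+n\nu$ is again $k_s$-defined — are routine.
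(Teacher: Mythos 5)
Your proposal is correct and is essentially the paper's argument: the paper also takes a cocharacter $\sigma\in Y(S)$ with $L_\sigma=L$ (your generic $\nu$), cites \cite[Lem.~6.2(i)]{BMR} for the identity $P_{n\sigma+\eta}=P_\eta(L)R_u(P_\sigma)$ for $n$ large (your step (b), which you prove directly by the root computation), and then uses that $\sigma$ fixes $v$ to transfer the destabilizing property (your step (a)). The only difference is that you unpack the two black-box citations into explicit arguments, which is harmless.
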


\begin{proof}
 Let $\lambda, \mu\in Y_{k}(L)$ such that $P_\lambda(L)= P_\mu(L)$.  We can choose $\sigma \in Y_k(S)$ such that $L_\sigma= L$ \cite[Lem.\ 2.5]{BHMR:cochars}.  Then $P_\lambda(L_\sigma) = P_\mu(L_\sigma)$ and there exists $n\in {\mathbb N}$ such that $P_{n\sigma+\lambda} = P_\lambda(L_\sigma)R_u(P_\sigma)$ and $P_{n\sigma+\mu}= P_\mu(L_\sigma)R_u(P_\sigma)$ \cite[Lem.\ 6.2(i)]{BMR}.  By hypothesis, either both of $n\sigma+\lambda$ and $n\sigma+\mu$ destabilize $v$, or neither one does.  In the first case, since $\sigma$ fixes $v$, both $\lambda$ and $\mu$ must destabilize $v$.  Conversely, in the second case neither $\lambda$ nor $\mu$ can destabilize $v$.
\end{proof}

\begin{lem}
\label{lem:sameapt}
 Let $T$ be a maximal torus of $G$, let $\mu_1,\ldots, \mu_r\in Y(T)\backslash \{0\}$, let $\mu= \sum_{i= 1}^r \mu_i$ and assume $\mu\neq 0$.  Suppose $g\in G$ and $g\cdot \zeta(\mu_i)= \zeta(\mu_i)$ for all $1\leq i\leq r$.  Then $g\cdot \zeta(\mu)= \zeta(\mu)$.
\end{lem}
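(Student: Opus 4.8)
The plan is to transfer everything to the vector building $V(\KK)$, where an addition operation is available. Write $\varphi\colon Y(\KK)\setminus\{0\}\ra V(\KK)\setminus\{0\}$ for the canonical projection and $p\colon V(\KK)\setminus\{0\}\ra\Delta(\KK)$ for the radial projection onto the unit sphere, so that $\zeta= p\circ\varphi$. Since the norm on $Y(\KK)$ is $G$-invariant, it descends to a $G$-invariant norm on $V(\KK)$; hence $G$ acts on $V(\KK)$ by norm-preserving maps, this action is $\varphi$-equivariant, and it commutes with $p$.

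The first step is to upgrade the hypothesis $g\cdot\zeta(\mu_i)= \zeta(\mu_i)$ to the stronger statement $g\cdot\varphi(\mu_i)= \varphi(\mu_i)$ in $V(\KK)$. Applying $\zeta= p\circ\varphi$ together with the equivariance of $\varphi$ and $p$ gives $p\bigl(g\cdot\varphi(\mu_i)\bigr)= p\bigl(\varphi(\mu_i)\bigr)$, so $g\cdot\varphi(\mu_i)$ and $\varphi(\mu_i)$ are positive scalar multiples of one another; since $\|g\cdot\varphi(\mu_i)\|= \|\mu_i\|= \|\varphi(\mu_i)\|$, they must coincide. Next, because $\mu$ and all the $\mu_i$ lie in the single apartment $Y(T,\KK)$, the definition of the addition on $V(\KK)$ from Section~\ref{subsec:convex} gives $\varphi(\mu)= \varphi(\mu_1)+\cdots+\varphi(\mu_r)$. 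Combining this with the first step and the $G$-equivariance of the addition, $g\cdot\varphi(\mu)= g\cdot\varphi(\mu_1)+\cdots+g\cdot\varphi(\mu_r)= \varphi(\mu_1)+\cdots+\varphi(\mu_r)= \varphi(\mu)$. Finally, $\mu\neq 0$ forces $\|\varphi(\mu)\|= \|\mu\|\neq 0$, so $\varphi(\mu)\in V(\KK)\setminus\{0\}$ and $g\cdot\zeta(\mu)= g\cdot p\bigl(\varphi(\mu)\bigr)= p\bigl(g\cdot\varphi(\mu)\bigr)= p\bigl(\varphi(\mu)\bigr)= \zeta(\mu)$.

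I expect the only genuine subtlety to be the first step: passing from ``$g$ fixes the point $\zeta(\mu_i)$ of the building'' to ``$g$ fixes the vector $\varphi(\mu_i)$''. This relies on the $G$-action on $V(\KK)$ being norm-preserving; without it one could only conclude that $g$ rescales each $\varphi(\mu_i)$, and the additivity of $\varphi$ on the apartment $Y(T,\KK)$ would then not be enough. Everything else is a formal consequence of the definitions of $\zeta$ and of the addition on $V(\KK)$.
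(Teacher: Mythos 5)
Your proof is correct and follows essentially the same route as the paper's: pass to the vector building $V(\KK)$, use the isometry property to upgrade the hypothesis from $g$ fixing $\zeta(\mu_i)$ to $g$ fixing $\varphi(\mu_i)$, and then combine additivity of $\varphi$ on the apartment $Y(T,\KK)$ with $G$-equivariance of the addition on $V(\KK)$ to conclude. The one genuine (if minor) difference is organizational: the paper argues at the level of $\zeta$, so it first chooses a permutation $\tau$ with all partial sums $\sum_{i\le t}\mu_{\tau(i)}$ nonzero and then inducts from the case $r=2$, because $\zeta$ is undefined on $0$ and the inductive step needs $\zeta$ of each partial sum to make sense. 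By phrasing the whole argument in $V(\KK)$ -- where the iterated pairwise sum $\varphi(\mu_1)+\cdots+\varphi(\mu_r)$ is always defined since each partial sum lies in the same apartment $Y(T,\KK)$ -- you sidestep both the permutation and the induction, which is a small simplification over the paper. It would be worth making explicit that the multi-term sum is the iterated pairwise sum and that each intermediate pairwise sum is indeed defined (the summands share the apartment $Y(T,\KK)$), since the paper only defines addition of two elements at a time.
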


\begin{proof}
Clearly, there is a permutation $\tau\in S_r$ such that none of the sums 
$\sum_{i= 1}^t \mu_{\tau(i)}$ is 0 for $1\leq t\leq r$.  
Consider the special case $r= 2$ (the general case follows easily by induction on $r$).  
Recall the addition operation $+$ on $V(\KK)$ and the canonical projection 
$\varphi\colon Y(\KK)\ra V(\KK)$ from Section~\ref{subsec:convex}.  
Let $\xi\colon V(\KK)\backslash \{0\}\ra \Delta(\KK)$ be the canonical projection.  
Note that $\varphi$ and $\xi$ are $G$-equivariant.  
Moreover, as $g$ fixes $\zeta(\mu_1)$ and $g$ acts as an isometry, 
$g$ fixes $\varphi(\mu_1)$, and likewise $g$ fixes $\varphi(\mu_2)$.  We have
\begin{align*}
g\cdot \zeta(\mu) & = g\cdot \zeta(\mu_1+ \mu_2)= g\cdot \xi(\varphi(\mu_1+ \mu_2))= \xi(g\cdot \varphi(\mu_1+ \mu_2))= \xi(g\cdot (\varphi(\mu_1)+ \varphi(\mu_2))) \\
&  = \xi(g\cdot \varphi(\mu_1)+ g\cdot \varphi(\mu_2))= \xi(\varphi(\mu_1)+ \varphi(\mu_2))= \xi(\varphi(\mu_1+ \mu_2))= \zeta(\mu_1+ \mu_2)= \zeta(\mu), 
\end{align*}
 as required.
\end{proof}

We now have everything in place to prove Theorem~\ref{thm:subcx}.

\begin{proof}[Proof of Theorem~\ref{thm:subcx}]
 For part (i), suppose $v\in V(k)$ and $G(k_s)\cdot v$ is not cocharacter-closed over $k_s$.
Clearly there is no harm in assuming $S$ is a maximal $k$-defined torus of $G_v$, so we shall do this.
Since the closed subgroup $\overline{G_v(k_s)}$ generated by $G_v(k_s)$ is $k_s$-defined and $\Gamma$-stable, it is $k$-defined.
Hence $S$ is a maximal torus of $\overline{G_v(k_s)}$; 
in particular, $S$ is a maximal $k_s$-defined torus of $G_v$.
Set $H= C_G(S)$.  If $\sigma\in Y_k(H)$ and $\sigma$ destabilizes $v$ but does not fix $v$ then $\sigma$ properly destabilizes $v$ over $k_s$ for $G$, by \cite[Lem.\ 2.8]{BHMR:cochars}.  Hence it is enough to prove that such a $\sigma$ exists.
 
By Theorem \ref{thm:leviascentdescent}(ii), $H(k_s)\cdot v$ is not cocharacter-closed over $k_s$.  So we can choose $\mu\in Y_{k_s}(H)$ such that $\mu$ properly destabilizes $v$ over $k_s$ for $H$.
If $\mu\in Y_{k_s}(Z(H^0))$ then we are done by Lemma~\ref{lem:geom_galois_ascent}
and Remark~\ref{rem:central_destab}.
So assume otherwise.  Then $P_\mu(H^0)$ is a proper subgroup of $H^0$.  By Lemma~\ref{lem:L_subcx}, $\Sigma_{v,k_s}(H)$ is a subcomplex of $\Delta_{k_s}(H,\KK)$.  It follows from Lemma~\ref{lem:cr} that $\Sigma_{v,k_s}(H)$ 
is not completely reducible, since if $Q$ is an opposite parabolic to $P_\mu(H^0)$ in $H^0$ then there exists $\mu'\in Y_{k_s}(H)$ such that $P_{\mu'}= Q$ and $\mu'$ is opposite to $\mu$, which is impossible.  Clearly, $\Sigma_{v,k_s}(H)$ is $\Gamma$- and $H_v(k_s)$-stable,
so by Theorem~\ref{thm:tccforsubcomplexes} there is a $\Gamma$- and $H_v(k_s)$-fixed simplex $s \in \Sigma_{v,k_s(H)}$,
corresponding to some proper parabolic subgroup $P$ of $H^0$.
There exists $\sigma\in Y_k(H)$ such that $P= P_\sigma(H^0)$ \cite[Lem.\ 2.5(ii)]{GIT}, 
and $\sigma$ destabilizes $v$ by construction.
Now $\sigma\not\in Y_{k_s}(Z(H^0))$ since $P$ is proper.
But every $k_s$-defined torus of $H_v(k_s)$ is contained in $Z(H^0)$ (since $S$ is contained in $Z(H^0)$), so $\sigma$ does not fix $v$.
As $\sigma$ commutes with $S$, it follows from \cite[Lem.\ 2.8]{BHMR:cochars} that $v':= \lim_{a\to 0} \sigma(a)\cdot v$ does not lie in $H(k_s)\cdot v$.
This completes the proof of (i).

For part (ii), Proposition \ref{prop:galois_descent} shows that if $G(k')\cdot v$ is
cocharacter-closed over $k'$ then $G(k)\cdot v$ is cocharacter-closed over $k$.
For the other direction, suppose that $G(k') \cdot v$ is not cocharacter-closed over $k'$.
Again by Proposition \ref{prop:galois_descent}, we may assume $k' = k_s$.
Applying part (i) with $S=1$, we find $\sigma \in Y_k$ such that $\sigma$ properly destabilizes
$v$ over $k_s$. In particular, $G(k) \cdot v$ is not cocharacter-closed over $k$.
This finishes part (ii).

Part (iii) of Theorem \ref{thm:subcx} follows using similar arguments to those in the proof of \cite[Thm.\ 5.7(ii)]{BHMR:cochars}.  Let $S$ be a $k$-defined torus of $G_v$ and let $L= C_G(S)$.
First, by the argument of \cite[Lem.\ 6.2]{BHMR:cochars}, we can assume without loss that $v\in V(k_s)$ without changing $\Sigma_{v,k_s}$.  Second, by \cite[Lem.\ 6.3]{BHMR:cochars} and the argument of the proof of \cite[Thm.\ 6.1]{BHMR:cochars}, we can pass to a suitable $G$-variety $W$ and find $w\in W(k)$ such that $\Sigma_{w,k_s}= \bigcap_{\gamma\in \Gamma} \gamma\cdot \Sigma_{v,k_s}$; in particular, $\Sigma_{w,k_s}$ is a subcomplex of $\Delta_k(\QQ)$ and $\Sigma_{w,k}= \Sigma_{v,k}$.  The arguments of \cite[Sec.\ 6]{BHMR:cochars} also show that $S$ fixes $w$.  Hence we can assume without loss that $v\in V(k)$.  As before, Lemma~\ref{lem:L_subcx} implies that $\Sigma_{v,k_s}(L)$ is a subcomplex of $\Delta_{k_s}(L,\KK)$.
We may thus apply part (ii) and assume $k=k_s$.
But then $S$ is $k$-split, so the result follows
from Theorem \ref{thm:leviascentdescent}. 
\end{proof}

\begin{rem}
 We do not know how to prove that $P_\sigma(G^0)$ from Theorem~\ref{thm:subcx}(i) is normalized by $G_v(k_s)$, but the proof does show that $P_\sigma(G^0)$ is normalized by $H_v(k_s)$.
\end{rem}

We finish the section by proving Theorem \ref{thm:nice_hyps}.

\begin{proof}[Proof of Theorem~\ref{thm:nice_hyps}]
 For part (i), suppose $v\in V(k)$ and $G(k_s)\cdot v$ is not cocharacter-closed over $k_s$.  Recall that a connected algebraic group is nilpotent if and only if it contains just one maximal torus (see \cite[21.4 Prop.\ B]{Hum}).  Let $G_i$ be a simple component of $G^0$.  If ${\rm rank}(G_i)= 1$ and $\dim (G_i)_v \geq 2$ then $(G_i)_v^0$ must contain a Borel subgroup $B_i$ of $G_i$: but then the orbit map $G_i\ra G_i\cdot v$ factors through the connected projective variety $G_i/B_i$ and hence is constant, so $(G_i)_v= G_i$.
 
 Let $N$ be the product of the simple components of $G^0$ that fix $v$, and let $K$ be the product of the remaining simple components of $G^0$ together with $Z(G^0)^0$.  Then $N$ and $K$ are $\Gamma$-stable, so they are $k$-defined. The next step is to factor out $N$ and reduce to the case when the stabilizer has nilpotent identity component.   As in the proof of Theorem~\ref{thm:subcx}, $\ovl{G_v(k_s)}$ is $k$-defined and we may assume $S$ is a maximal $k$-defined torus of $G_v$ and a $k_s$-defined maximal torus of $\overline{G_v(k_s)}$.  We can choose $k$-defined tori $S_0$ of $K$ and $S_2$ of $N$ such that $S= S_0S_2$.  Note that $K_v^0$ is nilpotent---this holds by assumption in case (a), and by the above argument in case (b)---so $S_0$ is the unique maximal $k$-defined torus of $\ovl{K_v(k_s)}$.
 
 Let $H_0= N_G(S_0)$, let $H= N_{H_0}(N)$ and let $M= C_{K}(S_0)$.  Then $H_0$ is $k$-defined \cite[Lem.\ A.2.9]{CGP}, so $H$ is $k$-defined as it is $\Gamma$-stable and has finite index in $H^0$.  Note that $H^0= NM= C_G(S_0)^0$, so $N$ is a product of simple components of $H^0$ and $M$ is the product of $Z(H^0)^0= S^0Z(G^0)^0$ with the remaining simple components of $H^0$.  The subgroup $M$ of $H$ is normal, so it is $\Gamma$-stable and hence $k$-defined.  Now $M_v^0$ is nilpotent since $K_v^0$ is, so $M_v^0$ has a unique maximal torus $S'$---in particular, $S_0\subseteq S'$ and $S_0$ is the unique maximal torus of $\overline{M_v(k_s)}$.  Since $G_v(k_s)$ normalizes $N$ and $K$, $G_v(k_s)$ normalizes $N$ and $S_0$, so $G_v(k_s)\subseteq H$; it follows that $H_v(k_s)= G_v(k_s)$.
  
 Let $H_1= H/NS_0$ and let $\pi\colon H\ra H_1$ be the canonical projection.  We wish to find $\lambda_1\in Y_{k_s}(H_1)$ such that $\lambda_1$ properly destabilizes $v$ over $k_s$ and $P_{\lambda_1}(H_1)$ is $k$-defined.
Note that no nontrivial $k_s$-defined cocharacter of $H_1$ fixes $v$; 
for if $0\neq \lambda_1\in Y_{k_s}(H_1)$ fixes $v$ then by Lemma~\ref{lem:normal}, 
there exist $n\in {\mathbb N}$ and $\lambda\in Y_{k_s}(M)$ such that 
$\pi\circ \lambda= n\lambda_1$, and $\langle {\rm Im}(\lambda)\cup S\rangle$ 
is a $k_s$-defined torus of $\ovl{G_v(k_s)}$ that properly contains $S$, contradicting the maximality of $S$.  
Clearly, $(H_1)_v^0$ is nilpotent with unique maximal torus $S_1':= \pi(S')$.    
Since $H^0= C_G(S_0)^0$, $H(k_s)\cdot v$ is not cocharacter-closed over $k_s$, 
by Theorem~\ref{thm:leviascentdescent}(ii) and \cite[Cor.\ 5.3]{BHMR:cochars}.  
Hence $H_1(k_s)\cdot v$ is not cocharacter-closed over $k_s$ (Lemma~\ref{lem:normal}).  
Let $\lambda_1\in Y_{k_s}(H_1)$ such that $\lambda_1$ destabilizes $v$.  By Lemmas~\ref{lem:geom_galois_ascent} and \ref{lem:normal}, 
we can assume $\lambda_1$ does not properly destabilize $v$ over $\ovl{k}$.  
Therefore, there exists $u\in R_u(P_{\lambda_1}(H_1))$ such that $u\cdot \lambda_1$ fixes $v$; 
then $u\cdot\lambda_1$ must be a cocharacter of $S_1'$.  
It follows that $\Sigma_{v,k_s}(H_1)\subseteq \Delta_{k_s}(T_1, \QQ)$, 
where $T_1$ is a fixed maximal torus of $H_1$ that contains $S_1'$.  Note that $T_1$ and $S_1'$ need not be $k$-defined, or even $k_s$-defined.
As $H_1(k_s)\cdot v$ is not cocharacter-closed over $k_s$, $\Sigma_{v,k_s}(H_1)$ 
is not completely reducible (Lemma~\ref{lem:cr}(ii)).  Now $\Sigma_{v,k_s}(H_1)$ 
is stabilized by $\Gamma$ and by $(H_1)_v(k_s)$, so it follows from Theorem~\ref{thm:singleapt} 
that $\Sigma_{v,k_s}(H_1)$ contains a $\Gamma$-fixed and $(H_1)_v(k_s)$-fixed point $x_1$.  
We can write $x_1= \zeta(\mu_1)$ for some $\mu_1\in Y_{k_s}(H_1)$.  
Then $\mu_1$ destabilizes $v$ but does not fix $v$; moreover, $P_{\mu_1}(H_1^0)$ 
is $\Gamma$-stable and is normalized by $(H_1)_v(k_s)$.  In particular, $P_{\mu_1}(H_1^0)$ is $k$-defined.
 
 By Lemma~\ref{lem:normal}, there exist $n\in {\mathbb N}$ and $\mu\in Y_{k_s}(M)$ such that $\pi\circ \mu= n\mu_1$ and $\mu$ destabilizes $v$; note that $\mu$ does not fix $v$, because $\mu_1$ does not.  The map $\pi$ gives a bijection between the parabolic subgroups of $M^0$ and the parabolic subgroups of $H_1^0$.  So $P_\mu(M^0)$ is $\Gamma$-stable---because $P_{\mu_1}(H_1^0)$ is---and hence is defined over $k$.  As $\pi(H_v(k_s))$ is contained in $(H_1)_v(k_s)$ and $(H_1)_v(k_s)$ normalizes $P_{\mu_1}(H_1^0)$, $H_v(k_s)$ normalizes $P_\mu(M^0)$.
 
 After replacing $\mu$ if necessary with an $R_u(P_\mu(H^0))(k_s)$-conjugate of $\mu$, we can assume that $\mu$ is a cocharacter of a $k$-defined maximal torus $T$ of $P_\mu(H^0)$.  Let $\mu^{(1)},\ldots, \mu^{(r)}$ be the $\Gamma$-conjugates of $\mu$.  These are cocharacters of $T$, so they all commute with each other.  Set $\sigma= \sum_{i= 1}^r \mu^{(i)}$, a $k$-defined cocharacter of $T$.  Note that $\sigma$ centralizes $S= S_0S_2$.  Now $\pi\circ \sigma$ destabilizes $v$ but does not fix $v$ (since $\pi\circ \sigma\neq 0$), so $\sigma$ does not fix $v$.  This implies by \cite[Lem.\ 2.8]{BHMR:cochars} that $\sigma$ properly destabilizes $v$ over $k_s$ for $G$.  Since $H_v(k_s)$ is $\Gamma$-stable and fixes $\zeta(\mu)$, $H_v(k_s)$ fixes $\zeta(\mu^{(i)})$ for all $1\leq i\leq r$.  It follows from Lemma~\ref{lem:sameapt} that $H_v(k_s)$ fixes $\zeta(\sigma)$: that is, for all $h\in H_v(k_s)$, there exists $u\in R_u(P_\sigma(H^0))(k_s)$ such that $h\cdot \sigma= u\cdot \sigma$.  Hence $P_\sigma(G^0)$ is normalized by $H_v(k_s)= G_v(k_s)$.  This completes the proof of (i).
 
 Parts (ii) and (iii) now follow as in the proof of Theorem~\ref{thm:subcx} (there is no need to reduce to the case when $v$ is a $k$-point in (iii) because we already assume this).
\end{proof}

\begin{rem}
\label{rem:arbpt}
 It can be shown that Theorem~\ref{thm:nice_hyps}(iii) actually holds without the assumption that $v$ is a $k$-point.  Here is a sketch of the proof.  It is enough to prove that Levi ascent holds.  Without loss, assume $S$ is a maximal $k$-defined torus of $G_v$.  As in the proof of Theorem~\ref{thm:subcx}(iii), we replace $v$ with a $k$-point $w$ of a $k$-defined $G$-variety $W$, with the property that $\Sigma_{w,k_s}\subseteq \Sigma_{v,k_s}$ and $\Sigma_{w,k_s}= \Sigma_{v,k_s}$.  By the arguments of \cite[Sec.\ 6]{BHMR:cochars}, we can assume that $S$ and $N$ fix $w$.  Suppose $G(k)\cdot v$ is not cocharacter-closed over $k$.  Then $G(k)\cdot w$ is not cocharacter-closed over $k$, so $L(k_s)\cdot w$ is not cocharacter-closed over $k_s$, by Galois descent and split Levi ascent.  It follows that $H_1(k_s)\cdot w$ is not cocharacter-closed over $k_s$, where $H_1$ is defined as in the proof of Theorem~\ref{thm:nice_hyps}.  We do not know whether hypotheses (a) and (b) of Theorem~\ref{thm:nice_hyps} hold for $w$.  The key point, however, that makes the proof of Theorem~\ref{thm:nice_hyps}(i) work is that $\Sigma_{v,k_s}(H_1)$ is contained in a single apartment of $\Delta_{k_s}(H_1,\QQ)$.  The analogous property holds for $\Sigma_{w,k_s}(H_1)$ since $\Sigma_{w,k_s}(H_1)\subseteq \Sigma_{v,k_s}(H_1)$.  Hence Galois ascent holds and $H_1(k)\cdot w$ is not cocharacter-closed over $k$.  Then $H_1(k)\cdot v$ is not cocharacter-closed over $k$, and the result follows.
\end{rem}

\section{Applications to $G$-complete reducibility}

Many of the constructions in this paper, and in the key references 
\cite{GIT}, \cite{stcc}, \cite{BHMR:cochars},
were inspired originally by the study of Serre's notion of \emph{$G$-complete reducibility} for subgroups of $G$.
We refer the reader to \cite{serre2} and \cite{BMR} for a thorough introduction to the theory.
We simply record the basic definition here:

\begin{defn}\label{def:gcr}
A subgroup $H$ of $G$ is said to be \emph{$G$-completely reducible over $k$} if whenever $H$ is contained
in a $k$-defined R-parabolic subgroup $P$ of $G$, there exists a $k$-defined R-Levi subgroup $L$ of $P$
containing $H$.  (We do not assume that $H$ is $k$-defined.)
\end{defn}

The following is \cite[Thm.\ 9.3]{BHMR:cochars}.

\begin{thm}
\label{thm:crvscocharclosed}
Let $H$ be a subgroup of $G$ and let $\tuple{h}\in H^n$
be a generic tuple of $H$ (see \cite[Def.\ 5.4]{GIT}).
Then $H$ is $G$-completely reducible over $k$ if and only if
$G(k) \cdot \tuple{h}$ is cocharacter-closed over $k$, where $G$ acts on
$G^n$ by simultaneous conjugation.
\end{thm}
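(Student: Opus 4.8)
The plan is to prove each direction by translating it, via Lemma~\ref{lem:cr}, into a statement about the destabilizing locus $\Sigma_{\tuple{h},k}$, and then to feed in only the basic properties of a generic tuple \cite[Def.~5.4]{GIT}. The two such properties I will need are: (a) for every R-parabolic subgroup and every R-Levi subgroup $M$ of $G$, one has $\tuple{h}\in M^n$ if and only if $H\subseteq M$; and (b) for any cocharacter $\lambda$, the limit $\lim_{a\to 0}\lambda(a)\cdot\tuple{h}$ exists precisely when $\tuple{h}\in P_\lambda^n$, in which case it lies in $L_\lambda^n$ --- the latter being immediate from the definitions of $P_\lambda$ and $L_\lambda$ applied coordinatewise. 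Both (a) and (b) hold because R-parabolic and R-Levi subgroups fall into finitely many conjugacy classes and $n$ is taken large.

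For the forward implication I would assume $H$ is $G$-completely reducible over $k$ and show $\Sigma_{\tuple{h},k}$ is completely reducible; the conclusion then follows from Lemma~\ref{lem:cr}(ii). So let $\lambda\in Y_k$ with $\zeta(\lambda)\in\Sigma_{\tuple{h},k}$. By (b) and (a), $\tuple{h}\in P_\lambda^n$ and hence $H\subseteq P_\lambda$; since $\lambda$ is $k$-defined, $P_\lambda$ is a $k$-defined R-parabolic subgroup, so $G$-complete reducibility over $k$ provides a $k$-defined R-Levi subgroup $L'$ of $P_\lambda$ containing $H$, and by \cite[Lem.~2.5]{GIT} we may write $L'= L_{u\cdot\lambda}$ with $u\in R_u(P_\lambda)(k)$. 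As $\IM(u\cdot\lambda)$ is a torus lying in $L_{u\cdot\lambda}= C_G(\IM(u\cdot\lambda))$, it is central there, so $H\subseteq L_{u\cdot\lambda}$ forces $u\cdot\lambda$ to centralize $H$; therefore $u\cdot\lambda$ fixes $\tuple{h}$ under simultaneous conjugation. By Lemma~\ref{lem:cr}(i), $\zeta(\lambda)$ has an opposite in $\Sigma_{\tuple{h},k}$. Since $\lambda$ was arbitrary, $\Sigma_{\tuple{h},k}$ is completely reducible.

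For the converse I would assume $G(k)\cdot\tuple{h}$ is cocharacter-closed over $k$ and take an arbitrary $k$-defined R-parabolic subgroup $P$ with $H\subseteq P$. Since $G$ is connected, $P= P_\lambda$ for some $\lambda\in Y_k$; by (a) we get $\tuple{h}\in P_\lambda^n$, so $\tuple{h}':= \lim_{a\to 0}\lambda(a)\cdot\tuple{h}$ exists, and Theorem~\ref{thm:mainconjcocharclosed} supplies $u\in R_u(P_\lambda)(k)$ with $\tuple{h}'= u\cdot\tuple{h}$. Then $\tuple{h}= u\inverse\cdot\tuple{h}'\in L_{u\inverse\cdot\lambda}^n$ (using $u\inverse L_\lambda u= L_{u\inverse\cdot\lambda}$ and $\tuple{h}'\in L_\lambda^n$), and since $u\inverse\in R_u(P_\lambda)(k)$ we have $u\inverse\cdot\lambda\in Y_k$ and $P_{u\inverse\cdot\lambda}= P_\lambda= P$, so $L_{u\inverse\cdot\lambda}$ is again a $k$-defined R-Levi subgroup of $P$; by (a), $H\subseteq L_{u\inverse\cdot\lambda}$. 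Hence $H$ is $G$-completely reducible over $k$. (When $G$ is non-connected a $k$-defined R-parabolic need not be of the form $P_\lambda$ with $\lambda\in Y_k$; there I would run the same argument using the description of R-parabolic and R-Levi subgroups of non-connected groups from \cite[Sec.~2]{GIT}, or first reduce to $G^0$.)

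The main obstacle is not the building/cocharacter bookkeeping above --- once Lemma~\ref{lem:cr} and Theorem~\ref{thm:mainconjcocharclosed} are available, the rest is essentially forced --- but rather setting up the notion of a generic tuple correctly and verifying property (a) over an arbitrary, possibly imperfect, field of any characteristic: one must show that, uniformly over all R-parabolic and R-Levi subgroups, containment of the long tuple $\tuple{h}$ is equivalent to containment of $H$, and that this is a genuinely generic condition on $\tuple{h}\in H^n$. That is where the real work lies.
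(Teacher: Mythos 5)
This theorem is not proved in the present paper; it is quoted verbatim from \cite[Thm.~9.3]{BHMR:cochars}, so there is no in-paper proof to compare against. Your reconstruction is nevertheless the natural one and is correct in its essentials: translate cocharacter-closedness of $G(k)\cdot\tuple{h}$ into complete reducibility of $\Sigma_{\tuple{h},k}$ via Lemma~\ref{lem:cr}(ii), feed the forward direction through the simple transitivity of $R_u(P_\lambda)(k)$ on $k$-defined R-Levi subgroups (\cite[Lem.~2.5]{GIT}) plus the observation that $H\subseteq L_{u\cdot\lambda}=C_G(\IM(u\cdot\lambda))$ forces $u\cdot\lambda$ to fix $\tuple{h}$, and feed the converse through Theorem~\ref{thm:mainconjcocharclosed}. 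Your properties (a) and (b) are exactly what the generic-tuple machinery of \cite[Def.~5.4]{GIT} and the lemmas around it supply (though a small nitpick: (b) has nothing to do with genericity --- it is just the coordinatewise definition of $P_\lambda$ and $L_\lambda$; only (a) uses that $\tuple{h}$ is generic). The algebraic manipulations $u\inverse L_\lambda u = L_{u\inverse\cdot\lambda}$, $P_{u\inverse\cdot\lambda}=P_\lambda$, etc.\ are all fine.

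The one genuine gap is precisely the one you flag at the end. The theorem is stated for possibly non-connected reductive $G$, and in the converse direction you only handle $k$-defined R-parabolic subgroups of the form $P_\lambda$ with $\lambda\in Y_k$; as the paper warns in Section~2 (citing \cite[Sec.~2]{GIT}), when $G$ is non-connected a $k$-defined R-parabolic need not arise from a $k$-defined cocharacter. Your first fallback, ``reduce to $G^0$'', does not work as stated: $H$ need not lie in $G^0$, $G$-complete reducibility over $k$ is not the same as $G^0$-complete reducibility over $k$ for an arbitrary subgroup, and replacing $P$ by $P^0=P_\nu(G^0)$ with $\nu\in Y_k$ produces an R-parabolic $P_\nu(G)$ of $G$ that need not contain $H$. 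So the non-connected converse requires a genuine additional argument using the structure theory of R-parabolic and R-Levi subgroups of non-connected groups, and that is the step your proposal leaves open. Everything else is sound.
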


Theorem~\ref{thm:crvscocharclosed} allows us to prove results
about $G$-complete reducibility over $k$ using our results on
geometric invariant theory.
If $G$ is connected, and $\tuple{h}\in G^n$
is a generic tuple for a subgroup $H$ of $G$, then $\Sigma_\tuple{h}$
is a subcomplex of $\Delta_G(\QQ)$, since for any $\lambda\in Y$,
$\lambda$ destabilizes $\tuple{h}$ if and only if $H\subseteq P_\lambda$;
this means that we are 
in the territory of Theorem \ref{thm:subcx}.

\begin{proof}[Proof of Theorem \ref{thm:LeviGaloisdescent-ascentforGcr}]
Let $\tuple{h}$ be a generic tuple of $H$.
Then $\Sigma_{\tuple{h},k_s}$ is a subcomplex of $\Delta_{G,k_s}$,
and $C_G(H) = G_{\tuple{h}}$.
The result now follows from Theorems \ref{thm:crvscocharclosed}
and \ref{thm:subcx}(iii).
\end{proof}

This theory has a counterpart for Lie subalgebras of $\gg:= {\rm Lie}(G)$.  
The basic definitions and results were covered for algebraically closed 
fields in \cite{mcninch} and \cite[Sec.\ 3.3]{BMRT:relative}, 
but the extension to arbitrary fields is straightforward 
(cf.\ \cite[Rem.\ 4.16]{BMRT:relative}).

\begin{defn}
 A Lie subalgebra $\hh$ of $\gg$ is \emph{$G$-completely reducible over $k$} 
if whenever $P$ is a $k$-defined parabolic subgroup of $G$ such that 
$\hh\subseteq \Lie(P)$, there exists a $k$-defined Levi subgroup 
$L$ of $P$ such that $\hh\subseteq \Lie(L)$.  
(We do not assume that $\hh$ is $k$-defined.)
\end{defn}

The group $G$ acts on $\gg^n$ via the simultaneous adjoint action 
for any $n\in \NN$.  
The next result follows from \cite[Lem.\ 3.8]{BMRT:relative} 
and the arguments in the proofs of \cite[Thm.\ 4.12(iii)]{BMRT:relative} 
(cf.\ \cite[Thm.\ 3.10(iii)]{BMRT:relative}).

\begin{thm}
\label{thm:crvscocharclosedLie}
 Let $\hh$ be a Lie subalgebra of $\gg$ and let $\tuple{h}\in \hh^n$ 
such that the components of $\tuple{h}$ generate $\hh$ as a Lie algebra.  
Then $\hh$ is $G$-completely reducible over $k$ if and only if 
$G(k)\cdot \tuple{h}$ is cocharacter-closed over $k$.
\end{thm}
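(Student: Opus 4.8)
The plan is to mimic the proof strategy used for the subgroup case (Theorem~\ref{thm:crvscocharclosed}) via a generic tuple, and then invoke the cited results from \cite{BMRT:relative} to make the dictionary between Lie-algebra complete reducibility and cocharacter-closure precise. The key observation driving everything is that, just as for subgroups, the destabilizing behaviour of a cocharacter $\lambda \in Y$ with respect to a generating tuple $\tuple{h} \in \hh^n$ is governed entirely by whether $\hh$ lands in $\Lie(P_\lambda)$: concretely, $\lim_{a\to 0} \lambda(a)\cdot \tuple{h}$ exists if and only if $\hh \subseteq \Lie(P_\lambda)$, and when the limit exists it equals (the tuple coming from) the image of $\hh$ in $\Lie(L_\lambda) = \Lie(P_\lambda)/\Lie(R_u(P_\lambda))$ under the canonical projection. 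This is exactly the content of \cite[Lem.\ 3.8]{BMRT:relative}, which one should cite for the identification of limits, valid over an arbitrary field by the remark \cite[Rem.\ 4.16]{BMRT:relative}.

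First I would establish the forward direction: suppose $\hh$ is $G$-completely reducible over $k$, and let $\lambda \in Y_k$ with $v' := \lim_{a\to 0}\lambda(a)\cdot\tuple{h}$ existing; I want $v' \in G(k)\cdot\tuple{h}$. By the observation above, $\hh \subseteq \Lie(P_\lambda)$, and since $P_\lambda$ is $k$-defined, $G$-complete reducibility over $k$ gives a $k$-defined Levi subgroup $L$ of $P_\lambda$ with $\hh \subseteq \Lie(L)$. Since $R_u(P_\lambda)(k)$ acts simply transitively on the $k$-defined R-Levi subgroups of $P_\lambda$, we may write $L = u^{-1} L_\lambda u$ for some $u \in R_u(P_\lambda)(k)$; then $u \cdot \hh \subseteq \Lie(L_\lambda)$, and conjugating by $u$ shows that $\lim_{a\to 0}\lambda(a)\cdot(u\cdot\tuple{h})$ is $u\cdot\tuple{h}$ itself projected trivially, so $v' = u^{-1}\cdot(u\cdot\tuple{h})$-type manipulation yields $v' \in R_u(P_\lambda)(k)\cdot\tuple{h} \subseteq G(k)\cdot\tuple{h}$. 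Hence $G(k)\cdot\tuple{h}$ is cocharacter-closed over $k$. For the converse, suppose $G(k)\cdot\tuple{h}$ is cocharacter-closed over $k$ and $\hh \subseteq \Lie(P)$ for a $k$-defined parabolic $P$. After conjugating by an element of $G(k)$ we may arrange (using that $G$ is being treated appropriately and $P = P_\lambda$ for a $k$-defined $\lambda$ when $G$ is connected; otherwise one argues via \cite[Sec.\ 2]{GIT}) that $P = P_\lambda$ for some $\lambda \in Y_k$. Then the limit $v' = \lim_{a\to 0}\lambda(a)\cdot\tuple{h}$ exists, and by Theorem~\ref{thm:mainconjcocharclosed} there is $u \in R_u(P_\lambda)(k)$ with $v' = u\cdot\tuple{h}$; unwinding the limit identification from \cite[Lem.\ 3.8]{BMRT:relative} forces $u^{-1}\cdot\hh \subseteq \Lie(L_\lambda)$, i.e.\ $\hh \subseteq \Lie(u L_\lambda u^{-1})$, and $u L_\lambda u^{-1}$ is a $k$-defined R-Levi subgroup of $P$. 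Thus $\hh$ is $G$-completely reducible over $k$.

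The step I expect to be the genuine obstacle — and the reason the theorem statement points to \cite[Lem.\ 3.8]{BMRT:relative} and the proof of \cite[Thm.\ 4.12(iii)]{BMRT:relative} rather than to an elementary argument — is making the equivalence ``$\lim_{a\to 0}\lambda(a)\cdot\tuple{h}$ exists $\iff$ $\hh \subseteq \Lie(P_\lambda)$, with the limit recording the projection to $\Lie(L_\lambda)$'' completely rigorous at the level of a \emph{generating} tuple rather than the full subalgebra, together with the subtlety that this tuple need not be ``generic'' in the strong sense of \cite[Def.\ 5.4]{GIT}. One must check that the components of $\tuple{h}$ generating $\hh$ as a Lie algebra is enough to detect membership $\hh \subseteq \Lie(P_\lambda)$ on the nose (since $\Lie(P_\lambda)$ is a Lie subalgebra, this is immediate: $\tuple{h}$ lies in $\Lie(P_\lambda)^n$ iff the generated subalgebra does), and that the projection $\Lie(P_\lambda) \to \Lie(L_\lambda)$ is a Lie algebra homomorphism compatible with taking limits — both of which are handled by the cited lemma. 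Once that dictionary is in place, the argument above is the same formal manipulation as for Theorem~\ref{thm:crvscocharclosed}, so I would keep the written proof short: cite \cite[Lem.\ 3.8]{BMRT:relative}, invoke Theorem~\ref{thm:mainconjcocharclosed} for the non-trivial direction, and refer to the proofs of \cite[Thm.\ 3.10(iii)]{BMRT:relative} and \cite[Thm.\ 4.12(iii)]{BMRT:relative} for the parts that transfer verbatim.
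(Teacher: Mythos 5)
Your proposal is correct and takes the same route as the paper, which simply defers to \cite[Lem.\ 3.8]{BMRT:relative} and the arguments of \cite[Thm.\ 4.12(iii)]{BMRT:relative} (cf.\ \cite[Thm.\ 3.10(iii)]{BMRT:relative}) without spelling them out; you have unpacked precisely those arguments, correctly identifying the key dictionary (the limit $\lim_{a\to 0}\lambda(a)\cdot\tuple{h}$ exists iff $\hh\subseteq\Lie(P_\lambda)$, with the limit being the projection to $\Lie(L_\lambda)$, and detection on generators sufficing because $\Lie(P_\lambda)$ is a subalgebra) and correctly invoking Theorem~\ref{thm:mainconjcocharclosed} for the nontrivial direction. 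The only blemishes are cosmetic: a muddled ``$u^{-1}\cdot(u\cdot\tuple{h})$-type manipulation'' in the forward direction (the clean statement is $v'=u\cdot\tuple{h}$) and a transposed conjugation ($\hh\subseteq\Lie(u^{-1}L_\lambda u)$, not $\Lie(uL_\lambda u^{-1})$) in the converse, neither of which affects the argument.
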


We now give the applications of our earlier results to $G$-complete reducibility over $k$ for Lie algebras.

\begin{thm}
 Let $\hh$ be a Lie subalgebra of $\gg$.
 \begin{itemize}
  \item[(i)] Suppose $\hh$ is $k$-defined, and let $k'/k$ be a separable algebraic extension.  
Then $\hh$ is $G$-completely reducible over $k'$ if and only if 
$\hh$ is $G$-completely reducible over $k$.
  \item[(ii)] Let $S$ be a $k$-defined torus of $C_G(\hh)$ and set $L= C_G(S)$. 
Then $\hh$ is $G$-completely reducible over $k$ if and only if $\hh$ is $L$-completely reducible over $k$.
 \end{itemize}
\end{thm}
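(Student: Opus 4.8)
The plan is to mirror the proof of Theorem~\ref{thm:LeviGaloisdescent-ascentforGcr} for the Lie-algebra setting, using Theorem~\ref{thm:crvscocharclosedLie} as the bridge between $G$-complete reducibility over $k$ for $\hh$ and cocharacter-closedness of a suitable orbit. First I would fix a tuple $\tuple h\in\hh^n$ whose components generate $\hh$ as a Lie algebra, and consider the point $\tuple h\in\gg^n$ under the simultaneous adjoint action of $G$. The key structural observation, exactly as in the subgroup case, is that for $\lambda\in Y$ the limit $\lim_{a\to 0}\Ad(\lambda(a))\cdot\tuple h$ exists if and only if $\hh\subseteq\Lie(P_\lambda)$; since $P_\lambda$ depends only on $\zeta(\lambda)$ (indeed only on the simplex it determines), this shows $\Sigma_{\tuple h}$ — and hence $\Sigma_{\tuple h,k_s}$ — is a subcomplex of $\Delta(\QQ)$ in the sense of Definition~\ref{defn:convex}(iii). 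I should record this small lemma carefully; it is the analogue of the remark after Theorem~\ref{thm:crvscocharclosed}.

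For part~(i), once we know $\Sigma_{\tuple h,k_s}$ is a subcomplex, Theorem~\ref{thm:subcx}(ii) applies: $G(k')\cdot\tuple h$ is cocharacter-closed over $k'$ if and only if $G(k)\cdot\tuple h$ is cocharacter-closed over $k$. Translating both sides back via Theorem~\ref{thm:crvscocharclosedLie} (the tuple $\tuple h$ is a valid generating tuple over any field extension, since the components are fixed elements of $\gg$ generating $\hh$), this says precisely that $\hh$ is $G$-cr over $k'$ iff $\hh$ is $G$-cr over $k$. One subtlety: Theorem~\ref{thm:subcx}(ii) is stated for $v\in V(k)$; here $\tuple h$ lies in $\gg(\ovl k)^n$ but need not be a $k$-point, just as in Theorem~\ref{thm:LeviGaloisdescent-ascentforGcr}, where the proof nonetheless goes through because what is really used is that $\Sigma_{\tuple h,k_s}$ is a $\Gamma$-stable subcomplex. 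I would note that the hypothesis $\hh$ is $k$-defined guarantees $\Sigma_{\tuple h,k_s}$ is $\Gamma$-stable, and point to the discussion in Remark~\ref{rem:arbpt} and \cite[Sec.~6]{BHMR:cochars} for why the $k$-point assumption in Theorem~\ref{thm:subcx} can be relaxed in exactly this situation.

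For part~(ii), the same translation applies with Theorem~\ref{thm:subcx}(iii): noting that $C_G(\hh)=G_{\tuple h}$ (the stabilizer of $\tuple h$ under simultaneous adjoint action is precisely the centralizer of the Lie algebra it generates), for $S$ a $k$-defined torus of $C_G(\hh)=G_{\tuple h}$ and $L=C_G(S)$, Theorem~\ref{thm:subcx}(iii) gives that $G(k)\cdot\tuple h$ is cocharacter-closed over $k$ iff $L(k)\cdot\tuple h$ is cocharacter-closed over $k$. Applying Theorem~\ref{thm:crvscocharclosedLie} to $G$ on the left and to $L$ on the right (with the same generating tuple $\tuple h$, now viewed in $\Lie(L)^n$) yields that $\hh$ is $G$-cr over $k$ iff $\hh$ is $L$-cr over $k$. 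One should double-check that $\tuple h$ is indeed a generating tuple for $\hh$ inside $\Lie(L)$ whenever $\hh\subseteq\Lie(L)$, which holds automatically since the generating condition is about $\hh$ itself, not the ambient algebra.

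The main obstacle I anticipate is not any deep new argument but rather the bookkeeping around the $k$-rationality of the point $\tuple h$: $\tuple h$ is a tuple in $\gg$ rather than a genuine $k$-point, so one cannot quote Theorem~\ref{thm:subcx} verbatim. The resolution is the one already used for Theorem~\ref{thm:LeviGaloisdescent-ascentforGcr} — the proofs of Theorem~\ref{thm:subcx}(ii),(iii) only require $\Sigma_{\tuple h,k_s}$ to be a $\Gamma$-stable subcomplex, which follows from $\hh$ being $k$-defined — together with the reduction sketched in Remark~\ref{rem:arbpt} (passing to a genuine $k$-point $w$ of an auxiliary $G$-variety with $\Sigma_{w,k_s}=\Sigma_{\tuple h,k_s}$). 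I would state this carefully and then let the two parts fall out in a couple of lines each, exactly parallel to the proof of Theorem~\ref{thm:LeviGaloisdescent-ascentforGcr}.
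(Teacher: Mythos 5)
Your overall strategy is the same as the paper's: fix a generating tuple $\tuple h\in\hh^n$, use Theorem~\ref{thm:crvscocharclosedLie} to translate $G$-complete reducibility of $\hh$ over a field into cocharacter-closedness of the orbit of $\tuple h$, and then apply Theorem~\ref{thm:subcx}(ii)/(iii) (after observing that $\Sigma_{\tuple h,k_s}$ is a subcomplex, because $\lambda$ destabilizes $\tuple h$ precisely when $\hh\sse\Lie(P_\lambda)$, which depends only on $P_\lambda$).

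However, you overcomplicate the rationality issue, and part of your proposed fix is shakier than you acknowledge. For part~(i) you plan to relax the $k$-point hypothesis in Theorem~\ref{thm:subcx}(ii) by analogy with Remark~\ref{rem:arbpt}; but that remark is specifically about Theorem~\ref{thm:nice_hyps}(iii), and it is not immediate that the same reduction works for Theorem~\ref{thm:subcx}(ii), whose proof passes through part~(i) and uses $\Gamma$-stability of $\ovl{G_v(k_s)}$, which is exactly what the $k$-point assumption is there to guarantee. The paper sidesteps this entirely: since $\hh$ is $k$-defined, it has a $k$-basis, which is a generating tuple, so one may simply choose $\tuple h\in\hh(k)^n$ and apply Theorem~\ref{thm:subcx}(ii) verbatim to a genuine $k$-point. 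For part~(ii) your worry is vacuous: Theorem~\ref{thm:subcx}(iii) does not assume $v\in V(k)$ (unlike parts~(i) and~(ii) of that theorem), which is precisely why the paper's proof of Theorem~\ref{thm:LeviGaloisdescent-ascentforGcr} works without $H\sse G(k)$ and why part~(ii) of the present statement needs no $k$-definedness hypothesis on $\hh$. So the argument you want is there, but you should replace the detour through Remark~\ref{rem:arbpt} with the observation that a $k$-defined $\hh$ admits a $k$-rational generating tuple, and drop the rationality worry in part~(ii) altogether.
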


\begin{proof}
 Pick $\tuple{h}\in \hh^n$ for some $n\in \NN$ such that the components of 
$\tuple{h}$ generate $\hh$ as a Lie algebra.  
If $\hh$ is $k$-defined then we can assume that 
$\tuple{h}\in \hh(k)^n$.  
Part (i) now follows from 
Theorems~\ref{thm:crvscocharclosedLie} and \ref{thm:subcx}(ii), 
and part (ii) from Theorems~\ref{thm:crvscocharclosedLie} and \ref{thm:subcx}(iii).
\end{proof}


\bigskip
{\bf Acknowledgments}:
The authors acknowledge the financial support of EPSRC Grant EP/L005328/1 and
of Marsden Grants UOC1009 and UOA1021.
Part of the research for this paper was carried out while the
authors were staying at the Mathematical Research Institute
Oberwolfach supported by the ``Research in Pairs'' programme.
Also, parts of this paper were written during mutual visits to Auckland,
Bochum and York.  We thank the referee for a number of suggestions to improve the exposition.


\end{document}